\documentclass[11pt]{amsart}

\usepackage[T1]{fontenc}
\usepackage{lmodern}
\usepackage{microtype}
\usepackage{amsmath,amssymb,amsthm,mathtools,mathrsfs}
\usepackage{booktabs}
\usepackage{enumitem}
\usepackage{xcolor}
\usepackage{geometry}
\usepackage{aliascnt}
\usepackage{hyperref}
\usepackage[nameinlink,capitalise,noabbrev]{cleveref}
\usepackage{array}

\hypersetup{
  colorlinks=true,
  linkcolor=blue!55!black,
  citecolor=blue!55!black,
  urlcolor=blue!55!black,
  pdfauthor={Gerald Hoehn},
  pdftitle={Definite integral Albert algebras: arithmetic and finite geometry}
}

\newtheorem{theorem}{Theorem}[section]
\theoremstyle{plain}
\newaliascnt{proposition}{theorem}
\newtheorem{proposition}[proposition]{Proposition}
\aliascntresetthe{proposition}
\crefname{proposition}{Proposition}{Propositions}
\newaliascnt{lemma}{theorem}
\newtheorem{lemma}[lemma]{Lemma}
\aliascntresetthe{lemma}
\crefname{lemma}{Lemma}{Lemmas}
\newaliascnt{corollary}{theorem}
\newtheorem{corollary}[corollary]{Corollary}
\aliascntresetthe{corollary}
\crefname{corollary}{Corollary}{Corollaries}
\newaliascnt{conjecture}{theorem}

\aliascntresetthe{conjecture}
\crefname{conjecture}{Conjecture}{Conjectures}
\theoremstyle{definition}
\newaliascnt{definition}{theorem}
\newtheorem{definition}[definition]{Definition}
\aliascntresetthe{definition}
\crefname{definition}{Definition}{Definitions}
\newaliascnt{construction}{theorem}

\aliascntresetthe{construction}
\crefname{construction}{Construction}{Constructions}
\theoremstyle{remark}
\newaliascnt{remark}{theorem}
\newtheorem{remark}[remark]{Remark}
\aliascntresetthe{remark}
\crefname{remark}{Remark}{Remarks}

\newcommand{\Z}{\mathbb Z}
\newcommand{\Q}{\mathbb Q}
\newcommand{\R}{\mathbb R}
\newcommand{\C}{\mathbb C}
\newcommand{\F}{\mathbb F}
\newcommand{\cO}{\mathcal O}

\newcommand{\Aut}{\operatorname{Aut}}
\newcommand{\Tr}{\operatorname{Tr}}
\newcommand{\Her}{\operatorname{Her}}
\newcommand{\Span}{\operatorname{span}}
\newcommand{\Stab}{\operatorname{Stab}}
\newcommand{\Mass}{\operatorname{Mass}}
\newcommand{\rk}{\operatorname{rk}}

\newcommand{\GL}{\operatorname{GL}}
\newcommand{\SO}{\operatorname{SO}}
\newcommand{\Tri}{\operatorname{Tri}}
\newcommand{\GO}{\operatorname{O}}

\newcommand{\id}{\mathrm{id}}

\title[Definite integral Albert algebras]{Definite integral Albert algebras:\\ arithmetic and finite geometry}
\author{Gerald H\"ohn}
\address{Department of Mathematics, Kansas State University, Manhattan, Kansas 66506, USA}
\email{gerald@monstrous-moonshine.de}
\date{September 2026}

\begin{document}

\begin{abstract}
We give a unified treatment of definite Albert algebras over $\Z$, with
exceptional Siegel--Weil identities as the main arithmetic tool.  Starting
from the standard octonion order alone, its first scalar theta coefficient
saturates the genus average and forces every remaining class to be rootless.
A cubic-ring coefficient of the $G_2$ theta lift then gives the mass of the
remaining scale-$2$ framed orders, also recovered from shell geometry.
A uniform Niemeier--Golay stabilizer bound saturates this mass, proving that
there is exactly one further class
and determining both automorphism orders, frame transitivity, and the $2457$
frames on $819$ rank-one elements.  The associated generalized hexagon
identifies the nonstandard group as ${}^3D_4(2):3$ only after its order has been
proved.  An independent weighted-theta argument gives the trace-zero
root-system alternative $\varnothing$ or $A_2E_8^3$.  The construction relates
positive masses of framed objects to exceptional Eisenstein coefficients.
\end{abstract}

\maketitle

\section{Introduction}

The aim of this paper is to understand the simple group ${}^3D_4(2)$ through
its exceptional configurations, and to explain their geometry and arithmetic
as parts of a single picture.  The model is the role of the Leech lattice in
understanding the Conway group $\mathrm{Co}_1$ \cite{ConwaySloane}, together
with the analogous treatment of the Hall--Janko group \cite{HohnHallJanko}.
Here the central configuration consists of $819$ points of the Cayley plane:
its angle relations form an association scheme, and its orthogonal triples
are the lines of the generalized hexagon of order $(2,8)$
\cite{Hoggar1989,ElkiesGrossCubic,Hoehn26}.  Its full geometric stabilizer is
${}^3D_4(2):3$, containing the simple group with index $3$; this is the group
recovered as the automorphism group of the exceptional integral Albert order.
We relate the configuration to
its rank-$26$ lattice, integral Albert algebra, local stabilizers, and theta
series.  The objective is to understand why these structures belong together,
not only to recover the name and order of their automorphism group.

The same viewpoint has guided the author's work on the Fischer groups
through the Conway--Parker algebra and on the Rudvalis group through the
Conway--Wales lattice: a distinguished algebra or lattice and its finite
configurations provide a way to understand the group as a whole.

The two definite integral Albert classes and their automorphism groups are
known from \cite{Gross,ElkiesGross,Conrad,GPR}.  The associated rootless
determinant-$3$ lattice was already characterized in Borcherds's Lorentzian
lattice analysis \cite[Chapter~5.7]{BorcherdsThesis}; it also occurs in Nebe's
classification of finite rational matrix groups and their invariant lattices
\cite{Nebe1996}.  Elkies--Gross connect cubic embeddings into the exceptional
Albert order with rank-$24$ lattices and Hilbert modular forms, including
the discriminants $49$ and $16$ treated below
\cite[Sections~2--7]{ElkiesGrossCubic}.  In the final section they outline a
Euclidean uniqueness proof through shell geometry and Niemeier reconstruction.
They also explain how the reconstruction choices should yield the
lattice automorphism-group order, but explicitly defer this calculation to a
future paper \cite[Section~9, concluding paragraph]{ElkiesGrossCubic}.
We are not aware of a subsequent publication carrying out that specific
reconstruction count.

King uses coefficients of Siegel Eisenstein series to compute masses with
prescribed root systems; his Example~7 combines the mass for the root system
$E_6$ in rank $32$ with the known automorphism order of the rank-$26$ lattice
to prove its uniqueness \cite[Example~7]{King}.  M\'egarban\'e's online
computations enumerate all $678$ classes in the larger quadratic genus of
even rank-$26$ determinant-$3$ lattices, together with their orthogonal-group
orders \cite{MegarbaneX26}.  This quadratic classification is distinct from
the classification of Albert orders, which retains the cubic structure.

The present coefficient-saturation proof supplies a determination of the kind
envisaged by Elkies--Gross, combining explicit local Niemeier--Golay data with
genus-wide exceptional theta identities.  Starting with the standard order
$J_I=\Her_3(\cO)$, where $\cO$ is Coxeter's integral octonion algebra, we leave
the rest of its genus unspecified.  Gan's exceptional Siegel--Weil identity
for $F_4\times G_2\subset E_{8,4}$ \cite{GanSiegelWeil} and its weight-$12$
scalar boundary for $F_4\times\mathrm{PGL}_2\subset E_{7,3}$ give two positive
coefficient identities.  The first determines the standard automorphism order
and forces rootlessness of every remaining class; the second gives the total mass of
framed residual classes.  The matching local stabilizer bound forces
uniqueness, transitivity, and the remaining automorphism order.  We also
recover the framed mass independently from the uniform shell theorem in
\cref{sec:bounds}, giving a second route to this final saturation.  The group
identifications come afterwards.

The Hall--Janko coefficient and mass arguments \cite{HohnHallJanko} and
Borcherds--Freitag--Weissauer's degree-$12$, weight-$12$ Siegel cusp form
\cite[Sections~1--2]{BFW} provide methodological models.  In the latter,
signed masses of finite configurations determine Niemeier theta combinations;
King also relates his mass identities to this construction
\cite[Remark~2]{King}.  Here positive masses of framed Albert orders turn
exceptional Eisenstein coefficients into completeness and exact stabilizers.
The analogy concerns configurations and stabilizers; our $G_2$ theta series
are not Siegel cusp forms.

Conrad's Example~7.4 exhausts the mass after identifying the integral-point
groups of the two compact $F_4$ models using the \emph{ATLAS}
\cite[Example~7.4]{Conrad}; his introduction also discusses theoretical
replacements for computer verification.  Using the ring-theoretic foundation
\cite{GPR}, we obtain the orders by coefficient saturation before identifying
the groups.

Here ``integral Albert algebra'' means an Albert algebra over $\Z$, not an
arbitrary lattice closed under a rational Jordan product; every definite
one belongs to the hyperspecial genus of $J_I$.  Complete Peirce--triality
data retain the rational cubic tensor, integral Peirce intersections, and
additive glue.  Their uniqueness below is a consequence of classification
and frame transitivity, not a classification of additive codes.

The local inputs from \cite{Hoehn26}, stated uniformly in
\cref{sec:bounds}, are shell geometry, line deletion, and the action on
oriented deletion data for arbitrary rootless rank-$26$ determinant-$3$
lattices.  Its global uniqueness, frame transitivity, and group orders are
not used in saturation.  The lattice--hexagon correspondence enters later,
for the incidence automorphisms.  The Hall--Janko paper and \cite{BFW} are
parallels, not dependencies.

Sections~\ref{sec:dictionary}--\ref{sec:bounds} develop the genus, theta
coefficients, and local bound; Section~\ref{sec:saturation} completes the
classification and derives its arithmetic consequences, including the
cubic-field example in \cref{subsec:cubic-field49}.
Section~\ref{sec:venkov} gives an independent Venkov argument, and
Section~\ref{sec:groups} treats finite geometry and identifies the groups.
Appendix~\ref{sec:PTcode} records complete Peirce--triality reconstruction.

The main result records the automorphism orders as conclusions of these
arguments.  In the proof, the second class is denoted by $J_E$ only after
uniqueness; \cref{subsec:exotic-model} then identifies an explicit Elkies--Gross
representative.

\begin{theorem}[classification by coefficient saturation]\label{thm:main}
The definite Albert algebras over $\Z$ are $J_I$ and $J_E$, up to isomorphism.
Their automorphism orders are
\begin{equation}\label{eq:orders-intro}
 \begin{aligned}
 A:=|\Aut(J_I)|&=2^{15}3^6 5^2 7=4\,180\,377\,600,\\
 B:=|\Aut(J_E)|&=2^{12}3^5 7^2 13=634\,023\,936.
 \end{aligned}
\end{equation}
The group $\Aut(J_E)$ is transitive on its $2457$ unordered primitive scale-$2$
Jordan frames, with stabilizer of order $258048$.  The complete Peirce--triality
gluing data associated with these frames form one isomorphism class.  Every
admissible definite Peirce--triality datum with a primitive scale-$2$ frame and no
trace-$1$ rank-one element is isomorphic to these data.
\end{theorem}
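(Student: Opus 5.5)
The classification is a mass computation in the genus of $J_I$, split by whether a class has roots. First we reduce to that genus: by \cite{GPR}, every definite Albert algebra over $\Z$ is a form of $J_I$ that is split at every prime, so it lies in the hyperspecial genus of $J_I$. We then fix the Smith--Minkowski--Siegel mass of this genus, computed via Gross's formula \cite{Gross} for the compact $F_4$ with hyperspecial level structure. Write it as
\[
M=\sum_{[J]}\frac{1}{|\Aut(J)|}.
\]

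\emph{Step 1 (standard order, rootless residue).} We compare the first nontrivial scalar theta coefficient of the genus with the Eisenstein coefficient given by Gan's Siegel--Weil identity for $F_4\times G_2\subset E_{8,4}$. This coefficient counts trace-$1$ rank-one (root) elements of $J$, weighted by $1/|\Aut(J)|$. In $J_I$ these are the $3\cdot 240+\dots$ elements $e_{ii}$-conjugates; their number is an explicit orbit count. The stabilizer of a primitive idempotent in $\Aut(J_I)$ is an arithmetic $\mathrm{Spin}_9$-type group. Its order is computable from the $E_8$/$D_8$ structure of the Peirce space $\cO^2$.

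Equating the weighted root count with the Eisenstein coefficient gives two things. First, it gives $|\Aut(J_I)|=A$, because $J_I$ acts transitively on its roots (by the $F_4$ transitivity on rank-one idempotents, reduced integrally). Second, the contribution of $J_I$ alone already saturates the coefficient. Since every term is nonnegative, every other class contributes zero, and so every other class is rootless.

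\emph{Step 2 (framed mass).} For the rootless residue, we use the weight-$12$ identity for $F_4\times\mathrm{PGL}_2\subset E_{7,3}$, i.e.\ the cubic-ring coefficient attached to $\Z^3$ with scale $2$. This yields
\[
\sum_{J\neq J_I}\frac{N_2(J)}{|\Aut(J)|}=M_2,
\]
where $N_2(J)$ is the number of unordered primitive scale-$2$ Jordan frames in $J$. We subtract the explicit contribution of $J_I$ frames from the Eisenstein coefficient, and then cross-check $M_2$ against the shell theorem of \cref{sec:bounds}. Every rootless class must contain a scale-$2$ frame. This follows from the shell geometry: the $819$ minimal rank-one elements of norm $2$ form orthogonal triples.

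\emph{Step 3 (saturation).} The local Niemeier--Golay bound of \cref{sec:bounds} gives a uniform upper bound of $258048$ for the order of the stabilizer of a frame in any rootless class. Moreover, the orbit of a frame has size $|\Aut(J)|/|\Stab|$. Hence
\[
\frac{N_2(J)}{|\Aut(J)|}\ge \frac{1}{258048}
\]
per $\Aut(J)$-orbit of frames, with equality exactly when the stabilizer bound is attained. The key numerical point is that $M_2=1/258048$. Given that, there is exactly one residual class $J_E$, with exactly one orbit of frames, and the stabilizer bound is attained.

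Counting frames via the shell data gives $2457=819\cdot 3$, since each of the $819$ elements lies on $3$ lines of the hexagon. Therefore
\[
|\Aut(J_E)|=2457\cdot 258048=B.
\]
As a consistency check, $M=1/A+1/B$ should hold. The statements on Peirce--triality data then follow from frame transitivity. Any admissible datum with a scale-$2$ frame and no roots reconstructs, by Appendix~\ref{sec:PTcode}, a definite Albert algebra over $\Z$ that is rootless, hence $J_E$. Two such data are isomorphic because $\Aut(J_E)$ is transitive on frames.

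The main obstacle is Step 2 together with the equality $M_2=1/258048$. Extracting the correct cubic-ring Eisenstein coefficient, with the right local densities at $2$ and the subtraction of the $J_I$ contribution, is delicate. I would first compute it via the shell-geometry route of \cref{sec:bounds} and only then match it to the $E_{7,3}$ coefficient.
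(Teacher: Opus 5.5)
Your architecture matches the paper's: genus reduction, a first scalar coefficient that saturates on $J_I$ and forces a rootless residue, a framed mass of $1/258048$, the uniform Niemeier--Golay bound, and reconstruction of Peirce--triality data. However, Step~1 fails as written.

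The first coefficient counts positive rank-one elements of trace~$1$. These are idempotents, not roots, and $J_I$ has exactly three of them. For such $e$ with diagonal $a_i\in\Z$ and off-diagonal $c_i\in\cO$, one has $1=T(e,e)=\sum a_i^2+2\sum n(c_i)$, which forces $e\in\{e_1,e_2,e_3\}$. So your count $3\cdot240+\dots$ is wrong.

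More seriously, the identity $\sum_{[J]}240a(J)/g_J=720/(6|W(E_8)|)$ is a statement about the whole genus. It cannot by itself determine $g_I$. Transitivity of $\Aut(J_I)$ on its idempotents says nothing about the stabilizer order, and real $F_4$-transitivity does not descend to $\Z$.

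The missing ingredient is an a priori \emph{upper} bound $g_I\le6|W(E_8)|$:
\begin{enumerate}[label=(\roman*)]
\item every automorphism permutes $e_1,e_2,e_3$, and the kernel of this action is $\Tri(\cO)$;
\item projection of $\Tri(\cO)$ to one factor has kernel of order at most $2$ (the middle-nucleus argument);
\item hence $|\Tri(\cO)|\le2|\SO(E_8)|=|W(E_8)|$.
\end{enumerate}
Then the $J_I$ term $720/g_I$ is at least the total. This forces $g_I=A$ and $a(J)=0$ on every other class. Rootlessness additionally needs $e=E-r^2$ (\cref{lem:root-to-idempotent}), since the coefficient controls $a(J)$, not $R(J^0)$.

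Your exact computation via a ``$\mathrm{Spin}_9$-type'' stabilizer would need the hard direction: that every element of $\SO(E_8)$ lifts to an integral triality. In the paper that surjectivity is an output of saturation, not an input.

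In Step~2 the two identities are swapped. The weight-$12$ $\mathrm{PGL}_2$ series has coefficients indexed by the trace. The frame count is instead the coefficient of Gan's $G_2$ lift at the cubic ring $\Z+2\Z^3$ (not $\Z^3$). That ring is non-Gorenstein, so the coefficient carries the content correction $c_{C_2}(\Theta_J)=6F_2(J)+96F_1(J)$. Its Eisenstein value $16577$ comes from the $T_7(2)$ relation with $c_{\Z^3}$ and $c_{R_4}$.

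Your fallback through shell geometry is legitimate and is the paper's second route. It gives $\sum F_2(J)/g_J=2457\,M_{\mathrm{res}}$, so it needs $M_{\mathrm{res}}=M-1/A$, i.e.\ the repaired Step~1.

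Finally, the frame count is $2457=819\cdot9/3$: in the hexagon of order $(2,8)$ each point lies on nine lines and each line has three points. With three lines through each point, as you state, there would be only $819$ lines.
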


The proof determines the two orders in different steps.  Triality first gives
$g_I:=|\Aut(J_I)|\le6|W(E_8)|$.  If $a(J)$ counts positive rank-one elements of
trace $1$, the scalar coefficient identity is
\[
 \sum_{[J]}\frac{240a(J)}{|\Aut(J)|}=\frac{120}{|W(E_8)|}.
\]
Since $a(J_I)=3$, its contribution already attains the right-hand side.
Thus the standard bound is sharp and every remaining class is rootless.
Each remaining class has a scale-$2$ frame.  The coefficient indexed by
$\Z+2\Z^3$ gives their total framed mass as $1/258048$; the uniform local
bound $|\Aut(J,\mathcal F)|\le258048$ forces a single framed class.
Only then does the reciprocal of the residual Gross mass become the order
of its automorphism group.  Frame transitivity and the count $2457$ follow.

For the recovered hexagon $H$, the mass argument also gives
\[
 \Aut(J_E)\cong\Aut^+(J_E^0)\cong\Aut(H),
 \qquad \Aut(J_E^0)\cong C_2\times\Aut(J_E).
\]
The conventional descriptions $2^2\!\cdot O_8^+(2)\!\cdot S_3$ and
${}^3D_4(2):3$ are obtained in the final main section, by octonionic triality
and the Steinberg--Tits hexagon.

A complementary argument derives the following alternative without the
mass or a prior enumeration of classes.

\begin{theorem}[exceptional Venkov dichotomy]\label{thm:venkov-intro}
Let $J$ be a definite integral Albert algebra in the hyperspecial genus.  Its
trace-zero lattice $J^0$ is even of rank $26$ and determinant $3$, and
\[
 R(J^0)=\varnothing\quad\text{or}\quad J^0\cong A_2\perp E_8^3.
\]
In the rooted case there are exactly three positive rank-one elements of trace
$1$, forming a Jordan frame, and $720$ primitive positive rank-one elements of
trace $2$.  In the rootless case there are no trace-$1$ rank-one elements and
there are $819$ primitive positive rank-one elements of trace $2$.
\end{theorem}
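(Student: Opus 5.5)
We plan to prove \cref{thm:venkov-intro} by a Venkov-type weighted theta argument on the trace-zero lattice; no mass formula or enumeration of classes enters. First we check the basic lattice facts. The trace form $(x,y)\mapsto\Tr(xy)$ on $J$ is unimodular and odd of rank $27$, and it represents the identity with $\Tr(1\cdot1)=3$. Hence $J^0=1^\perp$ has rank $26$ and determinant $3$. It is even because every $x\in J^0$ satisfies $\Tr(x^2)=\Tr(x)^2-2S(x)$ with $S(x)\in\Z$ the quadratic trace, so $\Tr(x^2)=-2S(x)\in2\Z$. Its discriminant group is $\Z/3$, generated by the image of $e-\tfrac13$ for any $e\in J$ of trace $1$ with $e\cdot 1\equiv1\pmod 3$.

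Next we compare roots of $J^0$ with rank-one elements. For a positive rank-one $e$ of trace $1$ we have $e^2=e$. Its projection $e-\tfrac13 1$ lies in the dual $(J^0)^\#$ with norm $\tfrac23$. Differences of two such idempotents in one frame are roots of $J^0$ of norm $2$. The converse direction, that every root $r$ of $J^0$ arises this way, will use the cubic norm: one checks that $r^\#$ is integral and produces orthogonal idempotents, so $r=e_i-e_j$.

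The main step is the theta identity. We take the Jacobi-type theta series of $J^0\perp A_2$ (or of the coset $J^0+\tfrac13$ component), weighted by a harmonic polynomial of degree $2$ and by $P(x)=(x,y)^4-c\,(x,x)^2(y,y)^2$ of degree $4$. These weighted series are cusp forms of weight $15$ and $17$ on $\Gamma_0(3)$ with character. The relevant spaces are tiny, and we expect them to force the design conditions:
\begin{itemize}
\item the shell of norm-$2$ vectors is a spherical $2$-design in the space it spans, hence either empty or spanning;
\item the norm-$\tfrac23$ and norm-$\tfrac43$ shells of the nontrivial coset satisfy the analogous conditions.
\end{itemize}
Write $N_2$ for the number of roots. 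Venkov's equation $\sum_{r}(r,y)^2=\tfrac{2N_2}{26}(y,y)$, restricted to each irreducible component of the root system, gives $h\cdot 26=N_2$ for a uniform Coxeter number $h$.

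Combined with the determinant-$3$ constraint, the orthogonal complement of the root sublattice must then be trivial. We expect the only solution to be $A_2E_8^3$: it has $N_2=6+720$, and the rank and determinant $3\cdot1$ match. We would rule out other candidates (e.g.\ $E_6$-containing configurations) using the degree-$4$ condition. This case analysis is the hardest step. We would try first to show that rank $26$ with determinant $3$ and a $4$-design forces the component Coxeter numbers $3$ and $30$, then to use the fact that the $A_2$ factor contains exactly three trace-$1$ idempotents forming a Jordan frame.

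Finally we count. In the rooted case, the trace-$2$ primitive rank-one elements correspond to the norm-$\tfrac43$ coset vectors. Subtracting the imprimitive ones $2e_i$ leaves $720$, which match the $E_8^3$ roots. In the rootless case, the theta series of $J^0$ is uniquely determined by the modular constraints together with $N_2=0$. The coefficient at norm $\tfrac43$ then gives $819$, and there are no norm-$\tfrac23$ coset vectors, i.e.\ no trace-$1$ rank-one elements.
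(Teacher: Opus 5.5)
\textbf{There is a genuine gap: the central modular-forms step uses the wrong theta series and cannot work.} Your series are ordinary harmonic theta series of the quadratic lattice $J^0$ (or its coset). They lie in $S_{15}$ and $S_{17}$ of $\Gamma_0(3)$ with character $\chi_{-3}$, and these spaces are not tiny: each has dimension $4$, since $\dim M_k(\Gamma_1(3))=\lfloor k/3\rfloor+1$ and two Eisenstein series must be removed.

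More decisively, no argument that sees only the quadratic lattice can prove the dichotomy. For every Niemeier lattice $N$, the lattice $A_2\perp N$ is even of rank $26$ and determinant $3$, with the same discriminant form as $J^0$. So $A_2\perp D_{24}^+$, or $A_2$ orthogonally summed with the Leech lattice, satisfies every quadratic modular constraint you could impose, yet has root system $A_2D_{24}$, respectively $A_2$.

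Your design conclusion also contradicts the answer you want. $A_2E_8^3$ has Coxeter numbers $3$ and $30$, so $\sum_r(r,y)^2$ equals $6(y,y)$ on $A_2\otimes\R$ and $60(y,y)$ on each $E_8\otimes\R$. Your relation $N_2=26h$ has no solution for $N_2=726$. Hence the degree-$2$ series of $J_I^0$ is a nonzero cusp form, and your case analysis would exclude $J_I$ itself.

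The missing idea is to use the cubic structure through the exceptional theta series. These sum over positive Jordan rank-one $x\in J$, weighted by $\sigma_3(c(x))$, and are indexed by $\Tr(x)$ rather than by quadratic norm. By the $F_4\times\mathrm{PGL}_2$ correspondence they have level one: weight $12$ unweighted, and weight $14$ with a trace-free linear weight. It is there that the spaces are tiny, since $M_{12}(\mathrm{SL}_2(\Z))=\C E_{12}\oplus\C\Delta$ and $S_{14}(\mathrm{SL}_2(\Z))=0$. The paper's argument then runs as follows.
\begin{enumerate}
\item The vanishing linear series gives $\sum_{e\in X_1(J)}e=\frac a3E$. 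Integrality and Cauchy--Schwarz make distinct trace-one idempotents trace-orthogonal, hence Jordan orthogonal. So the sum is an idempotent and $a\in\{0,3\}$.
\item A root $r$ yields the integral idempotent $E-r^2=-r^{\#}$. Your claim that every root is $e_i-e_j$ with integral $e_i$ is false: $J_I^0$ has $726$ roots but only three such idempotents.
\item The scalar series at $q^2$ gives $b=819-33a$.
\item In the rooted case, the linear identity at $q^2$ gives $\sum x=480E$. Hence there are exactly $240$ elements $e_i+e_j+y$ in each Peirce block, so $240$ roots in each rank-$8$ Peirce lattice, which forces $E_8$. Comparing determinants then shows $A_2\perp E_8^3\subseteq J^0$ has index one.
\end{enumerate}
This replaces your Coxeter-number analysis.

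Your final count also conflates shells. A trace-$2$ rank-one element projects to norm $8/3$, not $4/3$. Moreover, the norm-$8/3$ coset shell of $J_I^0$ has $2163$ vectors against only $723$ rank-one elements. For example, diagonal $(1,1,0)$ with a unit octonion in a block meeting the zero index has eigenvalues $1,(1\pm\sqrt5)/2$. So quadratic shell counts do not count rank-one elements without the exceptional series.
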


Shan's class-supported weighted theta correspondence and
$S_{14}(\mathrm{SL}_2(\Z))=0$ prove this independently of the two-class
classification \cite{ShanTheta}.

\section{Integral Albert algebras, frames, and mass}\label{sec:dictionary}

\subsection{The integral genus and its stabilizers}

For an Albert algebra $J$ over a commutative ring $R$, the automorphism functor
$\mathbf H_J=\underline{\Aut}_R(J)$ is an adjoint semisimple group scheme of type $F_4$.
Descent identifies Albert algebras with forms of this group scheme
\cite[Lemma~9.1 and Theorem~9.3]{GPR}.  In particular,
\[
 \mathbf H_J(\Z)=\Aut_{\Z}(J).
\]
For definite $J$, the real group $\mathbf H_J(\R)$ is compact, while its integral
points are a finite arithmetic subgroup.  These are the finite groups occurring in
the mass, not the compact real Lie group itself.

Let $G=\Aut(J_I\otimes\Q)$.  The rational classification by real forms and the
local splitting theorem imply that every definite Albert algebra over $\Z$ has
rational algebra $J_I\otimes\Q$ and is locally isomorphic to $J_I$ at every prime
\cite[Example~11.2 and Proposition~11.3]{GPR}.  Consequently all such algebras belong
to the single genus
\[
 \mathscr X=G(\Q)\backslash G(\mathbb A_f)/K_f,
 \qquad K_f=\prod_p\mathbf H_{J_I}(\Z_p).
\]
This uses the rational and local classifications, not the integral class-number
statement to be recovered below.

\begin{proposition}[mass stabilizers]\label{prop:mass-stabilizers}
The arithmetic stabilizer of the class corresponding to $J$ is $\Aut_{\Z}(J)$.
Its contribution to the mass is therefore $1/|\Aut_{\Z}(J)|$.
\end{proposition}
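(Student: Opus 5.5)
The plan is the standard dictionary between double cosets in an adelic genus and integral forms, made explicit for Albert algebras. First I will fix the bijection. Given $g=(g_p)\in G(\mathbb A_f)$, put
\[
 J_g:=\{x\in J_I\otimes\Q : x\in g_p(J_I\otimes\Z_p)\ \text{for all }p\}.
\]
This is a lattice that agrees with $J_I$ at almost all primes, because $g_p\in\mathbf H_{J_I}(\Z_p)$ for almost all $p$. It is closed under the Jordan product, the cubic norm and the unit, since each $g_p$ is a rational algebra automorphism and each localization is closed. Hence $J_g$ is an Albert algebra over $\Z$: being Albert is a local property, and each $J_g\otimes\Z_p\cong J_I\otimes\Z_p$ via $g_p$. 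The class of $J_g$ depends only on $G(\Q)gK_f$. Right multiplication by $K_f$ does not change $J_g$, and left multiplication by $\gamma\in G(\Q)$ replaces $J_g$ by $\gamma(J_g)\cong J_g$.

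Conversely, let $J$ be a definite Albert algebra over $\Z$. By the rational and local classification quoted above, there is a rational isomorphism $\phi\colon J\otimes\Q\to J_I\otimes\Q$ and local isomorphisms $J_I\otimes\Z_p\cong J\otimes\Z_p$. Composing these gives $g_p\in G(\Q_p)$ with $g_p(J_I\otimes\Z_p)=\phi(J\otimes\Z_p)$, and $g_p$ lies in $\mathbf H_{J_I}(\Z_p)$ for almost all $p$. Thus $g\in G(\mathbb A_f)$ and $J_g=\phi(J)$. Two such algebras are isomorphic over $\Z$ exactly when the $g$'s lie in the same double coset. The point is that a $\Z$-isomorphism extends to an element of $G(\Q)$ carrying one localized lattice to the other at every $p$, and a lattice in $J_I\otimes\Q$ is determined by its localizations.

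Second, I will compute the stabilizer. The arithmetic group attached to the double coset of $g$ is
\[
 \Gamma_g=G(\Q)\cap gK_fg^{-1}.
\]
An element $\gamma\in G(\Q)$ lies in $\Gamma_g$ iff $\gamma_p g_p(J_I\otimes\Z_p)=g_p(J_I\otimes\Z_p)$ for every $p$, that is, iff $\gamma$ preserves $J_g\otimes\Z_p$ for every $p$. By the local-global principle for lattices, this holds iff $\gamma(J_g)=J_g$. A rational automorphism preserving the lattice restricts to a $\Z$-algebra automorphism, and every $\Z$-automorphism extends rationally. Hence
\[
 \Gamma_g=\Aut_\Z(J_g)=\mathbf H_{J_g}(\Z),
\]
using the identity $\mathbf H_J(\Z)=\Aut_\Z(J)$ from GPR recalled above.

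Finally, I will check the mass normalization. $G(\R)$ is compact for the definite form, so $G(\Q)$ is discrete in $G(\mathbb A_f)$. Each $\Gamma_g$ is then a discrete subgroup of the compact open group $gK_fg^{-1}$, hence finite. In the mass formula $\sum_{[g]}1/|\Gamma_g|$, with Haar measure normalized so that $K_f$ has volume one and counting measure on $G(\Q)$, the orbit of the class of $g$ contributes $1/|\Gamma_g|=1/|\Aut_\Z(J_g)|$.

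The main obstacle is a subtlety rather than hard work. $G$ is adjoint, so $G(\Q)$ and $\Aut(J\otimes\Q)$ coincide; no center or spin cover intervenes, and the stabilizer needs no quotient by a central subgroup. I would confirm this from the identification $\mathbf H_J=\underline{\Aut}(J)$ as the adjoint group scheme quoted at the start of this subsection.
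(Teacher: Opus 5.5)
Your proposal is correct and follows the paper's proof, which likewise identifies $G(\Q)\cap gK_fg^{-1}$ with the rational automorphisms preserving the lattice at every prime, and hence with $\Aut_{\Z}(J)$; you also make explicit the class--double-coset correspondence and the finiteness and volume normalization behind $1/|\Aut_{\Z}(J)|$, which the paper leaves implicit. One small correction: $\Her_3(\cO)$ is not closed under the bilinear product $x\circ y$, since only $2(x\circ y)$ is integral, as the paper's quadratic-Jordan convention reflects. Integrality of $J_g$ should therefore be phrased through the norm, adjoint and unit, and your local argument via $g_p$ already gives exactly this.
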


\begin{proof}
For a representative $g\in G(\mathbb A_f)$, the stabilizer is
$G(\Q)\cap gK_fg^{-1}$.  This is the group of rational Albert automorphisms preserving
the corresponding lattice at every prime, hence preserving the integral Albert
algebra itself.
\end{proof}

We use $E$, $N$, $x\mapsto x^{\#}$, $\Tr$, and $T$ for the identity, cubic norm,
quadratic adjoint, trace, and trace pairing of $J$.  Thus
\[
 \Tr(E)=T(E,E)=3,\qquad T(x,y)=\Tr(x\circ y)
\]
over $\Q$.  Integral statements are understood in the quadratic Jordan sense, so
no division by $2$ in the bilinear product is imposed over $\Z$.
For example, squares are integral by
\[
 x^2=x^{\#}+\Tr(x)x-S(x)E,
 \qquad S(x)=\tfrac12\bigl(\Tr(x)^2-T(x,x)\bigr)\in\Z.
\]
Local equivalence with $J_I$ makes $(J,T)$ unimodular, $E$ characteristic, and
$J^0=E^\perp$ even of rank $26$ and determinant $3$.

\subsection{Frames and triality}

A rational Jordan frame consists of primitive orthogonal idempotents
$e_1,e_2,e_3\in J\otimes\Q$ with $e_1+e_2+e_3=E$.  The associated Peirce decomposition is
\begin{equation}\label{eq:Peirce}
 J\otimes\Q=D\oplus V_1\oplus V_2\oplus V_3,
 \qquad D=\bigoplus_i\Q e_i,\quad \dim V_i=8,
\end{equation}
where $V_i=J_{jk}$ for $\{i,j,k\}=\{1,2,3\}$.  The cubic norm has the form
\begin{equation}\label{eq:triality-cubic}
\begin{split}
 N\left(\sum_i a_i e_i+\sum_i x_i\right)
 ={}&a_1a_2a_3-a_1q_1(x_1)-a_2q_2(x_2)-a_3q_3(x_3)\\
 &+\tau(x_1,x_2,x_3),\qquad x_i\in V_i.
\end{split}
\end{equation}
The three quadratic forms and the trilinear triality tensor $\tau$ encode the
rational product \cite{SpringerVeldkamp}.  The integral order may still glue together
the four rational summands; this is the information retained in \cref{sec:PTcode}.

A \emph{scaled integral frame of scale $m$} is a set
$\mathcal F=\{t_1,t_2,t_3\}$ with $t_i=me_i\in J$.  It is primitive of scale $m$ if
$m$ is the least positive integer for which all $me_i$ are integral.  All frame
stabilizers and frame counts below are setwise and unordered.  For $m=1,2$, let $F_m(J)$ be the number of unordered rational Jordan frames
whose three multiples $me_i$ belong to $J$.  Thus $F_2$ includes scale-$1$
frames; on a class without trace-$1$ rank-one elements, all its scale-$2$
frames are primitive.

\subsection{Elementary rank-one facts}

Put
\[
 X_1(J)=\{e\in J:e\ge0,\ e^{\#}=0,\ \Tr(e)=1\},
 \qquad a(J)=|X_1(J)|.
\]
Write $R(J^0)=\{r\in J^0:T(r,r)=2\}$ for the ordinary roots.
Here positivity is with respect to the cone of squares in the Euclidean
Albert algebra $J\otimes\R$.  Its spectral theorem shows that every element
of $X_1(J)$ is a primitive idempotent.

\begin{lemma}[positive orthogonality]\label{lem:positive-orthogonality}
If $e,f$ are idempotents in a Euclidean Albert algebra and $T(e,f)=0$, then
$e\circ f=0$.
\end{lemma}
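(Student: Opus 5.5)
The proof will use only positivity in the Euclidean Jordan algebra $J\otimes\R$ and the identity $T(x,y)=\Tr(x\circ y)$. The idea is that $e\circ f$ is forced to vanish because a sum of two nonnegative quantities is zero.

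First, I will write the Peirce decomposition of $J\otimes\R$ relative to the idempotent $e$:
\[
 J\otimes\R=J_1(e)\oplus J_{1/2}(e)\oplus J_0(e),
\]
the eigenspaces of $L_e\colon x\mapsto e\circ x$. These spaces are orthogonal for $T$, since $L_e$ is self-adjoint with respect to the trace form; this is associativity of $T$. Decompose $f=f_1+f_{1/2}+f_0$. Then
\[
 0=T(e,f)=T(e,f_1),
\]
because $e\in J_1(e)$. I will also use that $U_e$, the quadratic representation, is the projection onto $J_1(e)$, so $f_1=U_ef$. The operator $U_e$ maps squares to squares, hence it preserves the closed cone $\Omega$ of squares. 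Since $f=f^2\in\Omega$, we get $f_1\in\Omega\cap J_1(e)$.

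Next comes the key positivity step. The subalgebra $J_1(e)$ is a Euclidean Jordan algebra with unit $e$. An element of its cone with trace against the unit equal to zero must vanish, because its eigenvalues are nonnegative and sum to zero. Hence $f_1=0$. By the symmetric argument with the roles of $e$ and $f$ exchanged, $e$ has no component in $J_1(f)$.

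To conclude, I would avoid the $f_{1/2}$ bookkeeping altogether with the following standard trick. Compute
\[
 T(e,U_fe)=T(U_fe,e)=T\bigl(e,\{f e f\}\bigr),
\]
but it is more direct to use $U_ef=0$, which follows from $f_1=U_ef=0$. Write $f=g^2$ with $g=f$. Then
\[
 0=T(U_ef,E)=T(U_e f,E)=\Tr\bigl(U_e(f\circ f)\bigr).
\]
A cleaner route is available: in a Euclidean Jordan algebra, $U_ef=0$ for $f\ge0$ implies $U_{f^{1/2}}e=0$ by the symmetry $\Tr(U_xy^2)=\Tr(U_yx^2)$ for the trace form. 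Then I use the identity $U_ef=0$ together with $f\ge0$ to get $e\circ f=0$. This is the standard fact that $\langle U_e f,f\rangle$-type vanishing forces $f\in J_0(e)$. Concretely, $U_e f=0$ and positivity give $U_f e=0$ by the trace symmetry, and then $\Tr\bigl((e\circ f)^2\bigr)$ can be expanded via $U$-operators into terms $\Tr(U_ef)$ and $\Tr(U_fe)$, all of which vanish. Definiteness of $T$ then gives $e\circ f=0$.

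The main obstacle is this last step: showing that $f\in\Omega$ with $U_ef=0$ lies in $J_0(e)$, that is, that $f_{1/2}=0$. I would first try the direct Peirce estimate. For $f=f_{1/2}+f_0$ idempotent, the $J_1(e)$-component of $f^2$ is $U_e(f_{1/2}^2)$, which is positive with trace $\tfrac12 T(f_{1/2},f_{1/2})$. But this component must equal $f_1=0$. Hence $f_{1/2}=0$ by definiteness, so $f\in J_0(e)$ and $e\circ f=0$.
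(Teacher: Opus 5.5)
Your final argument is correct and is essentially the paper's proof: both decompose $f$ in the Peirce decomposition relative to $e$ and use $f^2=f$, associativity of $T$, and definiteness. The paper handles both components at once with $0=T(e,f)=T(e,f^2)=T(e\circ f,f)=T(f_1,f_1)+\tfrac12T(f_{1/2},f_{1/2})$. That makes your separate cone argument for $f_1$ unnecessary, and likewise the $U$-operator detour in the middle paragraph, where $\Tr(U_fe)$ should read $T(e,U_fe)$.
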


\begin{proof}
Write $f=f_1+f_{1/2}+f_0$ in the Peirce decomposition relative to $e$.
Associativity of the trace pairing and $f^2=f$ give
\[
 0=T(e,f)=T(e,f^2)=T(e\circ f,f)
   =T(f_1,f_1)+\tfrac12T(f_{1/2},f_{1/2}).
\]
Positivity forces $f_1=f_{1/2}=0$, and hence $e\circ f=0$.
\end{proof}

\begin{lemma}[a root produces a trace-one idempotent]\label{lem:root-to-idempotent}
If $r\in R(J^0)$, then
\[
 e=E-r^2
\]
belongs to $X_1(J)$.
\end{lemma}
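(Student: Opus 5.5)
We show directly that $e=E-r^2$ is integral, has trace $1$, satisfies $e^{\#}=0$, and is positive.

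\emph{Integrality and trace.} Since $r\in J$ and squares are integral in the quadratic Jordan sense, $r^2\in J$, so $e\in J$. From $\Tr(r)=0$ and $T(r,r)=2$ we get $\Tr(r^2)=T(r,r)=2$, hence $\Tr(e)=3-2=1$.

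\emph{Eigenvalues of $r$.} We work in the Euclidean algebra $J\otimes\R$ and use the spectral theorem: $r=\lambda_1e_1+\lambda_2e_2+\lambda_3e_3$ for some real Jordan frame, with $\sum\lambda_i=0$ and $\sum\lambda_i^2=2$. Then $e=\sum(1-\lambda_i^2)e_i$. The remaining work is to show that the $\lambda_i^2$ are $\{1,1,0\}$ in some order.

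Any $\lambda$ with $\sum\lambda_i=0$ and $\sum\lambda_i^2=2$ satisfies $\lambda_i^2\le 4/3$, but this alone does not force the values. We therefore bring in integrality of the cubic norm. The characteristic polynomial of $r$ is $t^3-S(r)t-N(r)$, where $S(r)=\tfrac12(0-2)=-1$ and $N(r)\in\Z$. So the $\lambda_i$ are the roots of $t^3-t-n$ with $n=N(r)\in\Z$. Real roots of this polynomial require discriminant $4-27n^2\ge0$, which forces $n=0$. Hence $\{\lambda_i\}=\{1,-1,0\}$ and $e=e_k$, where $\lambda_k=0$, is a primitive idempotent.

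\emph{Conclusion.} As a primitive idempotent, $e$ satisfies $e\ge0$ and $e^{\#}=0$ (the adjoint of a rank-one element vanishes). Alternatively, $e^{\#}=0$ can be checked from the identity $x^2=x^{\#}+\Tr(x)x-S(x)E$ with $S(e)=0$ and $e^2=e$. Together with integrality and $\Tr(e)=1$, this gives $e\in X_1(J)$.

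The one step that is more than routine is the discriminant argument forcing $N(r)=0$, so that the spectrum is $\{1,-1,0\}$. Its inputs are integrality of $N$ and $S$ on $J$, and $\Tr(r)=0$, which holds because $r\in J^0$.
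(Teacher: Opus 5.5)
Your proof is correct and takes the same route as the paper. Both arguments force $N(r)=0$ from integrality together with the real spectrum; your discriminant condition $27n^2\le4$ is exactly the paper's bound $|N(r)|\le 2/(3\sqrt3)$. The paper then uses Cayley--Hamilton ($r^3=r$) and $e=-r^{\#}$, whereas you read off $e=e_k$ from the spectral decomposition and use integrality of squares. One small slip: for trace-zero $r$ the characteristic polynomial is $t^3+S(r)t-N(r)$, not $t^3-S(r)t-N(r)$, although the polynomial $t^3-t-n$ you actually use is the correct one.
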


\begin{proof}
Let $\lambda_1,\lambda_2,\lambda_3$ be the real Jordan eigenvalues of $r$.  Then
\[
 \lambda_1+\lambda_2+\lambda_3=0,
 \qquad
 \lambda_1^2+\lambda_2^2+\lambda_3^2=2.
\]
A one-variable Lagrange-multiplier calculation gives
\[
 |N(r)|=|\lambda_1\lambda_2\lambda_3|
 \le \frac{2}{3\sqrt3}<1.
\]
Since $N(r)\in\Z$, it follows that $N(r)=0$.  Moreover
\[
 S(r)=\frac12\bigl(\Tr(r)^2-T(r,r)\bigr)=-1.
\]
The cubic Cayley--Hamilton identity therefore reduces to
\[
 r^3-r=0.
\]
Consequently $e=E-r^2$ satisfies $e^2=e$ and
$\Tr(e)=3-T(r,r)=1$.  The adjoint identity gives $r^{\#}=r^2-E$, so
$e=-r^{\#}\in J$ by integrality of the quadratic adjoint.  An idempotent of
trace $1$ is primitive and positive.
\end{proof}

\subsection{The standard class and its elementary bound}\label{subsec:standard}

Let $J_I=\Her_3(\cO)$ for Coxeter's maximal order in the positive-definite
rational octonion algebra.  Its trace-zero lattice is
\begin{equation}\label{eq:standard-trace-lattice}
 J_I^0\cong A_2\perp E_8\perp E_8\perp E_8,
 \qquad |R(J_I^0)|=6+3\cdot240=726.
\end{equation}
The diagonal trace-zero part is $A_2$ and the three off-diagonal Peirce
lattices are copies of the octonion norm lattice $E_8$.

A positive rank-one element $e$ of trace $1$, with diagonal entries
$a_i\in\Z$ and off-diagonal entries $c_i\in\cO$, satisfies
\[
 1=\Tr(e)^2-2S(e)=T(e,e)=\sum_i a_i^2+2\sum_i n(c_i).
\]
All octonion norms are nonnegative integers.  Thus the off-diagonal entries
vanish and precisely one diagonal entry is $\pm1$; the trace forces the
positive sign.  The three diagonal idempotents are therefore the only
elements of $X_1(J_I)$, and
\begin{equation}\label{eq:standard-a}
 a(J_I)=3,\qquad F_1(J_I)=1.
\end{equation}

\begin{proposition}\label{prop:standard-bound}
For $J_I=\Her_3(\cO)$,
\[
 |\Aut(J_I)|\le 6|W(E_8)|=2^{15}3^6 5^2 7.
\]
\end{proposition}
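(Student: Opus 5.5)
The plan is to bound $\Aut(J_I)$ through its action on the finite set $X_1(J_I)$, which by \eqref{eq:standard-a} consists exactly of the three diagonal idempotents $e_1,e_2,e_3$. Every automorphism preserves $E$, the trace, positivity and the condition $e^{\#}=0$, so it permutes $X_1(J_I)$. This gives a homomorphism $\Aut(J_I)\to S_3$. Its image has order at most $6$, so it suffices to show that the kernel $K$, the stabilizer of the ordered frame, satisfies $|K|\le|W(E_8)|$.

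An element $g\in K$ fixes $D$ pointwise and preserves each integral Peirce intersection $J_I\cap V_i$. These intersections are the three off-diagonal copies of $\cO$ with norm lattice $E_8$. Since $g$ preserves the cubic norm, \eqref{eq:triality-cubic} shows that $g$ restricts to isometries $g_i$ of $(V_i,q_i)$ preserving the trilinear tensor $\tau$. The integral lattice $J_I\cap V_i$ is preserved, so each $g_i$ lies in $\GO(E_8)=W(E_8)$. The key step is to show that $g$ is determined by one component, say $g_1$. This is local triality: if $\tau(x_1,x_2,x_3)=\tau(g_1x_1,g_2x_2,g_3x_3)$ for all arguments, then $g_1$ determines $g_2$ and $g_3$.

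To make this precise, I would take $g$ with $g_1=\id$ and show $g=\id$. Then $\tau(x_1,g_2x_2,g_3x_3)=\tau(x_1,x_2,x_3)$ for all $x_1$. Nondegeneracy of $\tau$ in the first slot gives the identity of bilinear maps $V_2\times V_3\to V_1^{*}$. In octonionic terms, with $\tau(x_1,x_2,x_3)=\Tr((x_1x_2)x_3)$ up to conjugations, this says $(g_2x_2)(g_3x_3)=x_2x_3$ for all $x_2,x_3$. Setting $x_3=1$ gives $g_2x_2\cdot u=x_2$ with $u=g_3(1)$ of norm $1$. Setting $x_2=1$ similarly gives $g_3x_3=v^{-1}x_3$ with $v=g_2(1)$. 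Hence $(x_2u^{-1})(v^{-1}x_3)=x_2x_3$ for all $x_2,x_3$. Taking $x_2=x_3=1$ gives $u^{-1}v^{-1}=1$.

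From here the Moufang identities should force $u=\pm1$, which gives the pair $(g_2,g_3)=(\pm\id,\pm\id)$. The sign is then fixed to $+$ because $g$ must preserve $\tau$ itself, not just up to sign; the case $-\id$ on both slots is compatible with $\tau$ but has $g_1=\id$. I need to recheck this carefully. The worry is that $(\id,-\id,-\id)$ preserves $\tau$ and the diagonal, so it is a genuine automorphism of $J_I$ with $g_1=\id$.

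If so, $K\to W(E_8)$, $g\mapsto g_1$, has kernel of order at most $2$. In that case I would instead use the $W(E_8)$ image more carefully: the image of $g\mapsto g_1$ lies in the subgroup of $\GO(E_8)$ liftable via triality, and the liftable subgroup should have index $2$. Indeed $W(E_8)=2\cdot O_8^+(2).2$, and only the elements of spinor norm $1$ lift, which is the index-$2$ subgroup. This restores $|K|\le 2\cdot|W(E_8)|/2=|W(E_8)|$. I expect this determination of the kernel and image of $K\to\GO(E_8)$ to be the main obstacle. The bookkeeping reduces to the classical fact that the related-triple group $\{(g_1,g_2,g_3)\}$ preserving $\tau$ is $\mathrm{Spin}_8$, a double cover of its projection to $\SO_8$.

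Combining the two steps gives $|\Aut(J_I)|\le 6|K|\le 6|W(E_8)|$. The numerical value follows from $|W(E_8)|=2^{14}3^55^27$, which gives $6|W(E_8)|=2^{15}3^65^27$.
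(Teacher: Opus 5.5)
Your argument is essentially the paper's proof: you reduce to the kernel of the action on the three diagonal idempotents, project the integral triality group onto one Peirce factor, and bound that projection's kernel by $2$. For the kernel, note that substituting $x_2=zu$ turns your identity $(x_2u^{-1})(ux_3)=x_2x_3$ into $z(ux_3)=(zu)x_3$. What you need is therefore that the middle nucleus of the octonions is $\Q$, rather than the Moufang identities, and then $n(u)=1$ gives $u=\pm1$.

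One correction concerns the image. The index two comes from the determinant, not from the spinor norm. The related triples form the connected group $\mathrm{Spin}_8$, so each component is a proper isometry and the image lies in $\SO(E_8)$. The spinor norm imposes no further condition on $\SO(E_8)=W(E_8)^+$, since its elements are products of evenly many reflections in roots of equal norm.
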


\begin{proof}
The coordinate calculation above shows that the three
diagonal idempotents $e_1,e_2,e_3$ are the only positive rank-one elements of
trace $1$.  Therefore every automorphism
permutes them, and the kernel of the resulting map to $S_3$ is the integral triality group
\[
 \Tri(\cO)
 =\{(\alpha,\beta,\gamma)\in\SO(\cO)^3:
      \alpha(x)\beta(y)=\gamma(xy)\}.
\]
The three actions are proper isometries: the rational pointwise frame stabilizer is the triality group of type $D_4$, acting through its three eight-dimensional representations \cite{SpringerVeldkamp}.  Projection to the third component has kernel of order at most $2$.  Indeed, if $\gamma=\id$, then for $r=\beta(1)$ one obtains
\[
 \alpha(x)=xr^{-1},
 \qquad
 \beta(y)=ry,
 \qquad
 z(ry)=(zr)y\quad\text{for all }y,z.
\]
The last identity puts $r$ in the middle nucleus of the octonion algebra, which consists of scalars \cite{ConwaySmith}.  Hence $r=\pm1$.  Consequently
\[
 |\Tri(\cO)|
 \le2|\SO(E_8)|
 =|\GO(E_8)|
 =|W(E_8)|.
\]
The order of the Weyl group follows from its invariant degrees:
\[
 |W(E_8)|
 =2\cdot8\cdot12\cdot14\cdot18\cdot20\cdot24\cdot30
 =2^{14}3^5 5^2 7.
\]
Multiplication by $|S_3|=6$ gives the asserted bound.
\end{proof}

\subsection{Gross's mass and framed objects}\label{sec:mass-assisted}

For the compact rational group $G$, split at every finite place, Gross's
formula is \cite[(5.1)]{Gross}\cite[(7.1) and Example~7.4]{Conrad}
\begin{equation}\label{eq:F4mass}
 M:=\Mass(F_4)=\sum_{[J]\in\mathscr X}\frac1{g_J}
 =\prod_{d\in\{2,6,8,12\}}\frac{\zeta(1-d)}2
 =\frac{691}{D_0},
 \quad g_J=|\Aut(J)|,
\end{equation}
where $D_0=2^{15}3^6 5^2 7^2 13$.  This gives the total mass without
specifying how many classes contribute.  To separate them, we will also use
the mass of framed objects.

\begin{proposition}[coefficient-saturation principle]\label{thm:mass-assisted-conway}
Let $\mathscr Y$ be a nonempty collection of classes in a finite arithmetic
genus, and let $\mathcal F(J)$ be a finite nonempty invariant set of frames
on every $J\in\mathscr Y$.  Then
\[
 \sum_{[J]\in\mathscr Y}\frac{|\mathcal F(J)|}{|\Aut(J)|}
 =\sum_{[(J,\mathcal F)]}\frac1{|\Aut(J,\mathcal F)|},
\]
where the second sum is over isomorphism classes of framed objects.
If every frame stabilizer has order at most $s$ and this mass is $1/s$,
there is precisely one framed isomorphism class.  Consequently $\mathscr Y$
contains one class, its automorphism group is frame-transitive, and its
frame stabilizers have order $s$.
\end{proposition}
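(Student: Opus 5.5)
The plan is to prove the identity by an orbit--stabilizer count, and then read off the uniqueness from the saturation. For each $J\in\mathscr Y$, the finite group $\Aut(J)$ acts on the finite set $\mathcal F(J)$, and the orbits of this action correspond to the isomorphism classes of framed objects $(J,\mathcal F)$ with underlying class $[J]$. Here two framed objects are isomorphic exactly when an isomorphism of the underlying algebras carries one frame to the other. Invariance of $\mathcal F$ under isomorphisms makes this well defined on classes. I would fix one representative $J$ per class and choose frames $\mathcal F_1,\dots,\mathcal F_k$, one in each orbit. Orbit--stabilizer then gives
\[
 \frac{|\mathcal F(J)|}{|\Aut(J)|}=\sum_{j=1}^k\frac{|\Aut(J)\cdot\mathcal F_j|}{|\Aut(J)|}=\sum_{j=1}^k\frac1{|\Aut(J,\mathcal F_j)|}.
\]
The stabilizer of $\mathcal F_j$ in $\Aut(J)$ is by definition $\Aut(J,\mathcal F_j)$. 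Summing over the classes in $\mathscr Y$ gives the displayed identity.

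The one point I expect to need care is the bookkeeping. I must check that framed objects whose underlying algebras are isomorphic but not equal are correctly identified with orbits on a single representative. Transporting along a fixed isomorphism does this, and it is independent of the chosen isomorphism up to the $\Aut(J)$-action. Finiteness of $\Aut(J)$ is guaranteed because these are integral points of a compact arithmetic group (\cref{prop:mass-stabilizers}); finiteness of $\mathcal F(J)$ is part of the hypothesis. All sums are therefore finite sums of positive rationals.

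For the saturation step, assume every $|\Aut(J,\mathcal F)|\le s$. Then each term on the right is at least $1/s$. The right-hand side is nonempty, because $\mathscr Y$ is nonempty and each $\mathcal F(J)$ is nonempty. If the total is exactly $1/s$, there is precisely one framed class, and its stabilizer has order exactly $s$. This uniqueness has three consequences.

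\begin{itemize}
\item Every $J\in\mathscr Y$ contributes at least one framed class, so $\mathscr Y$ has exactly one class.
\item That class has exactly one $\Aut(J)$-orbit on $\mathcal F(J)$, so the action is frame-transitive.
\item All frames are conjugate, so every frame stabilizer has order $s$. In particular $|\Aut(J)|=s\,|\mathcal F(J)|$.
\end{itemize}
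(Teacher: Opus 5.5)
Your proposal is correct and follows the paper's proof: orbit--stabilizer applied to the $\Aut(J)$-action on each $\mathcal F(J)$ gives the identity, and then the lower bound $1/s$ for each summand, together with nonemptiness of every frame set, forces a single framed class of stabilizer order $s$. Your extra bookkeeping, which identifies framed classes with orbits by transporting frames along isomorphisms, makes explicit what the paper leaves implicit.
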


\begin{proof}
For each $J$, partition its frame set into automorphism orbits and apply
orbit--stabilizer.  This gives the equality.  Every summand on its right is
at least $1/s$, so a total of $1/s$ consists of exactly one summand, of that
value.  The nonemptiness of every frame set excludes undetected unframed
classes.
\end{proof}

The analogous rank-$3$ icosian calculation relates $525$ orthogonal frames,
a stabilizer bound of $2304$, and the mass giving order $1\,209\,600$
\cite[Section~6.2, Proposition~6.1 and Remark~6.2]{HohnHallJanko}.
Here an exceptional $G_2$ coefficient will supply the framed mass directly.

\section{Exceptional theta coefficients and the residual genus}\label{sec:coefficients}

\subsection{Class-supported scalar theta series}\label{sec:rank-one-theta}

The exceptional scalar theta series counts positive rank-one elements by
trace, rather than all vectors by quadratic norm:
\[
 \vartheta_J(\tau)=1+240
 \sum_{\substack{0\ne x\in J,\ x\ge0\\x^{\#}=0}}
       \sigma_3(c(x))q^{\Tr(x)},\qquad q=e^{2\pi i\tau}.
\]
Here $c(x)$ is the largest positive integer $d$ with $x\in dJ$, and
$\sigma_j(d)=\sum_{b\mid d}b^j$.

\begin{lemma}[class-supported scalar modularity]\label{lem:scalar-theta-local}
For every $J$ in the hyperspecial genus, $\vartheta_J$ belongs to
$M_{12}(\mathrm{SL}_2(\Z))$, independently of the class number and the mass.
\end{lemma}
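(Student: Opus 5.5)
We plan to prove that $\vartheta_J$ is a weight-$12$ modular form by realizing it as the pullback, along $\mathrm{PGL}_2\hookrightarrow E_{7,3}$, of a weighted theta lift attached to the single class $J$. Only the lattice $J$ and its local structure enter, never the genus as a whole.

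Consider the dual pair $F_4\times\mathrm{PGL}_2\subset E_{7,3}$. Attach to $J$ the characteristic function of $\widehat{J}=\prod_p J\otimes\Z_p$ on $F_4(\mathbb A_f)$, and form its exceptional theta lift via the minimal representation of $E_{7,3}$.

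\textbf{Step 1: Fourier coefficients of the lift.} On the Heisenberg parabolic, the Fourier–Jacobi coefficients of the minimal representation are supported on the rank-one cone $\{x:x^{\#}=0\}$. This is the Kim–Gan description of the minimal orbit: its generic Fourier support in the Siegel parabolic of $E_{7,3}$, whose unipotent radical is $J$, consists of the rank $\le1$ elements. Integrating against the trivial automorphic form on the definite group $F_4$ turns the $x$-th coefficient into a local product.
\begin{itemize}
\item At the archimedean place, the spherical vector of the quaternionic discrete series of weight $12$ forces $x\ge0$ and contributes $q^{\Tr(x)}$.
\item At each finite $p$, the spherical vector depends only on the $p$-adic content of $x$. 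Because $J\otimes\Z_p\cong J_I\otimes\Z_p$, we may compute these factors on the split hyperspecial order. Their product is $240\,\sigma_3(c(x))$.
\end{itemize}
Thus the lift, restricted to the embedded upper half-plane of $\mathrm{PGL}_2$, has the expansion $\vartheta_J$. It is automorphic for $\mathrm{PGL}_2(\Z)$ because $E_{7,3}(\Z)$ contains $\mathrm{SL}_2(\Z)$, and the chosen vector is $K_f$-spherical. It has weight $12$ by the archimedean $K$-type.

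\textbf{Step 2: holomorphy and the cusp.} The constant term equals $1$, coming from $x=0$. There are no negative-index terms, since only $x\ge0$ contribute. Hence $\vartheta_J\in M_{12}(\mathrm{SL}_2(\Z))$.

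\textbf{Step 3: independence from the mass.} Nothing above sums over classes. The genus enters only through the local isomorphism $J\otimes\Z_p\cong J_I\otimes\Z_p$ (\cite[Proposition~11.3]{GPR}). That isomorphism makes the finite factors universal, so the resulting form is class-supported.

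\textbf{Main obstacle.} The hardest step is the local unramified computation. We must show that the $p$-adic spherical Fourier coefficient on the rank-one cone equals $\sum_{b\mid p^k}b^3$ at content $p^k$, with the global normalization $240$. Our first attempt would be to compute the Jacquet integral of the minimal representation along the rank-one orbit. Equivalently, we would identify the coefficient with the orbit count of primitive rank-one vectors of $J\otimes\Z_p$ modulo $p^k$, which is a geometric series in $p^3$. As a consistency check, the $q^1$ coefficient for $J_I$ should be $240\cdot3=720$, matching $E_4^3$ up to the $E_{12}$/cusp decomposition. If the direct computation resists, we would fall back on citing Gan's Siegel–Weil framework for $E_{7,3}$ together with its weight-$12$ scalar boundary, and verify that its local factors do not depend on the global class.
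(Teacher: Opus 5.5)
\textbf{There is a genuine gap, at exactly the point the lemma is about:} how an arbitrary class $J$, rather than $J_I$ or the whole genus, enters the Fourier calculation.

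\textbf{Where the construction goes wrong.} The ``characteristic function of $\widehat{J}$'' is not a function on $F_4(\mathbb A_f)$. Step~1 then integrates the kernel against the trivial automorphic form on $F_4$. Taken literally, this produces the genus average $\sum_{[J]}\vartheta_J/g_J=ME_{12}$ of \eqref{eq:scalar-genus-average}. So Step~3's claim that nothing sums over classes fails, and your fallback to Gan's Siegel--Weil identity has the same defect. To isolate one class, you must integrate against the function supported on the double coset $G(\Q)gK_f$, where $J=V\cap g(J_I\otimes\widehat{\Z})$; the paper gives it value $g_J$ so that the stabilizer weight cancels. The lift is then essentially $h\mapsto\theta(g_fh)$, with $g_f\notin G(\Q)K_f$ whenever $J\not\cong J_I$.

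\textbf{Why the local computation does not settle this.} The known expansion of Kim's series runs over rank-one elements of $J_I$ at the base point. If your finite factors were literally the $J_I$ ones, as ``universal'' suggests, every class would produce $\vartheta_{J_I}$. The sentence ``because $J\otimes\Z_p\cong J_I\otimes\Z_p$, we may compute these factors on the split hyperspecial order'' skips exactly the step that replaces $J_I$-content by $J$-content. Likewise, the translated vector is not $K_f$-spherical; level one holds instead because $F_4(\mathbb A_f)$ commutes with $\mathrm{PGL}_2(\widehat{\Z})$. The unramified computation you call the main obstacle is actually the standard input, namely Kim's expansion with its $\sigma_3$-weighted rank-one coefficients.

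\textbf{How the paper supplies the transport.} It uses the $E_6$ norm-isometry group $\mathbf S$, which is simply connected of real rank $2$. Strong approximation gives $g=\delta_fk$ with $\delta\in\mathbf S(\Q)$, $k\in\mathbf S(\widehat{\Z})$ and $\delta J_I=J$. Left $E_{7,3}(\Q)$-automorphy and right hyperspecial invariance replace $g_f$ by $\delta_\infty^{-1}$, which acts linearly on the tube domain. The contragredient satisfies $\delta^*J_I=J$ and $T(x,\delta^{-1}E)=\Tr(\delta^*x)$. Reindexing by $y=\delta^*x$ therefore turns the $J_I$-expansion into $\vartheta_J$, preserving positivity, rank one and content.

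\textbf{A possible local alternative.} Since $g_p\in F_4(\Q_p)$ lies in the Levi of the Siegel parabolic, $\ell_{x,p}\circ\omega(g_p)$ is the Fourier functional at $g_p^{-1}x$, up to a normalization that is trivial on $F_4$. This converts $J_I$-content into $J$-content. That route, however, needs each global rank-one Fourier functional to factor into local ones with a uniform global constant, via local multiplicity one for the minimal representation. Your proposal neither states nor justifies this.

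\textbf{A smaller point.} The archimedean vector is the lowest-weight vector of the weight-$4$ holomorphic minimal representation of $E_{7,3}$, not a spherical or quaternionic one. Weight $12=3\cdot4$ appears only after restriction to the diagonal.
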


\begin{proof}
We first justify transport to an arbitrary class without using the
integral classification.  Realize $J$ in $V=J_I\otimes\Q$ as
$V\cap g(J_I\otimes\widehat{\Z})$, with $g\in G(\mathbb A_f)$.

Let $\mathbf S$ be the norm-isometry group scheme of $(J_I,N)$.
It is simply connected of type $E_6$ and its real fiber has rank $2$
\cite[Proposition~6.5 and Section~7, first paragraph]{Conrad}.
Strong approximation away from infinity therefore gives
\[
 g=\delta_f k,\qquad
 \delta\in\mathbf S(\Q),\quad k\in\mathbf S(\widehat{\Z}),
 \qquad \delta J_I=J.
\]
Here $\delta_f$ is the finite adelic component of $\delta$.  This is a
norm-lattice isomorphism, not necessarily an Albert isomorphism fixing $E$.
Let $\delta^*$ be its contragredient for the trace pairing, characterized by
$T(\delta x,\delta^*y)=T(x,y)$; it also belongs to $\mathbf S(\Q)$
\cite[Remark~2.3.7]{ShanTheta}.  Self-duality of $J_I$ and $J$ gives
\[
 \delta^*J_I=(\delta J_I)^\vee=J^\vee=J,
 \qquad T(x,\delta^{-1}E)=\Tr(\delta^*x).
\]
The real norm-isometry group preserves the positive cone, and the norm
and its differential characterize Jordan rank one.  Thus reindexing by
$y=\delta^*x$ preserves positivity, rank one, and integral content; the
last identity supplies exactly the trace exponent for $J$.

Now use the undifferentiated exceptional theta kernel for
$F_4\times\mathrm{PGL}_2$ in $E_{7,3}$
\cite[Section~5.1, especially Remark~5.1.3]{ShanTheta}.
On $\mathscr X$ take the scalar algebraic modular form supported on $[g]$
with value $g_J=|\Aut(J)|$.  In the theta integral the single stabilizer
weight $1/g_J$ cancels this value.  Left rational automorphy and right
hyperspecial invariance replace the finite translate $g$ by
$\delta_\infty^{-1}$ at infinity.  The preceding contragredient
reindexing in the Fourier calculation gives precisely $\vartheta_J$,
including its constant term $1$.  The kernel gives weight $12$ at infinity
and level one at the finite places.  This proves the assertion without
knowing the number of classes or any stabilizer order.  The same transport
will be used for the linear-weighted calculation in
\cref{prop:class-supported-theta}.
\end{proof}

Let $a(J)$ be the number of positive rank-one elements of trace $1$.
Since $M_{12}(\mathrm{SL}_2(\Z))=\C E_{12}\oplus\C\Delta$, the constant term
and the coefficient of $q$ give
\begin{equation}\label{eq:theta-general-a}
 \vartheta_J
 =E_{12}+\left(240a(J)-\frac{65520}{691}\right)\Delta.
\end{equation}
\subsection{The genus-wide Siegel--Weil identity and its scalar boundary}
\label{subsec:genus-SW}

Use the dual pair $F_4\times G_2\subset E_{8,4}$, with $F_4$ compact at
infinity and $G_2$ split.  The level-one minimal theta kernel gives a
weight-$4$ quaternionic modular form $\Theta_J$ for each integral Albert
class $J$.  Normalize the minimal kernel so that a primitive integral
rank-one Freudenthal vector has coefficient $1$, as in the explicit formula
below.  This normalization agrees with that of Gan--Gross--Savin: for a
Gorenstein totally real cubic ring $C$ one has
\begin{equation}\label{eq:G2-embedding-coefficients}
 c_C(\Theta_J)=N(C,J),
\end{equation}
where $N(C,J)$ counts embeddings preserving the unit and cubic norm, not
orbits of embeddings \cite[Sections~4 and~10, Proposition~10.1]{GanGrossSavin}.
The coefficients are indexed by cubic rings, or equivalently by the
corresponding $\GL_2(\Z)$-orbits of integral binary cubic forms.
For their fixed exceptional Albert model, Elkies--Gross attach to an
individual cubic embedding its rank-$24$ orthogonal complement and a Hilbert modular form of parallel
weight $4$ \cite[Sections~2--4]{ElkiesGrossCubic}.  Here the order $C$ instead
indexes a coefficient counting embeddings into each genus member.

For later use we specify the extension to non-Gorenstein coefficients.
Identify $J^\vee$ with $J$ using its unimodular trace pairing, put
\[
 W_J(\Z)=\Z\oplus J\oplus J^\vee\oplus\Z,
 \qquad
 p_J(a,b,c,d)=a u^3+\Tr(b)u^2v+\Tr(c)uv^2+d v^3,
\]
and let $\operatorname{cont}(w)$ be the content in this integral Freudenthal
lattice.  After the fixed archimedean Whittaker factors have been removed,
the minimal-kernel formula is
\begin{equation}\label{eq:G2-lift-coefficient}
 c_f(\Theta_J)=
 \sum_{\substack{w\in W_J(\Z),\ \rk_F(w)=1\\p_J(w)=f}}
       \sigma_4(\operatorname{cont}(w)).
\end{equation}
Here $\rk_F$ denotes Freudenthal rank, distinct from Jordan rank.  This is
Pollack's explicit expansion, with the universal scalar chosen to agree
with \eqref{eq:G2-embedding-coefficients}
\cite[Theorem~1.0.1 and Section~2.5]{PollackMinimal}
\cite[discussion of $m=0$ following Theorem~1.2.1]{PollackTheta}.  All its terms are nonnegative
integers.  The transport established in the proof of
\cref{lem:scalar-theta-local} applies here as well: the norm group acts on
the $J$ and $J^\vee$ coordinates by the mutually contragredient maps
$\delta$ and $\delta^*$, respectively.  It therefore transports the full
integral Freudenthal lattice, preserving Freudenthal rank and content.
The projection $p_J$ is taken with the identity of $J$; it is not assumed
to be invariant under an arbitrary norm isometry.  Thus the same Fourier
calculation gives \eqref{eq:G2-lift-coefficient} on every class, without
an integral classification.

Let $\mathscr E$ be the spherical weight-$4$ Eisenstein series on $G_2$,
normalized by
\begin{equation}\label{eq:G2-normalization}
 c_{\Z^3}(\mathscr E)=1.
\end{equation}
This fixes a nondegenerate Fourier coefficient, not a constant term.

\begin{proposition}[full-genus theta averages]\label{prop:genus-SW}
There is a nonzero constant $\kappa$ such that
\begin{equation}\label{eq:G2-genus-average}
 \sum_{[J]\in\mathscr X}\frac{\Theta_J}{g_J}=\kappa\mathscr E.
\end{equation}
The scalar boundary satisfies
\begin{equation}\label{eq:scalar-genus-average}
 \sum_{[J]\in\mathscr X}\frac{\vartheta_J}{g_J}=M E_{12}.
\end{equation}
These are identities over the entire genus; neither assumes its class number
or the orders of its stabilizers.
\end{proposition}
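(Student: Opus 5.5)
We prove \eqref{eq:G2-genus-average} by interpreting its left side as a theta integral over the whole genus and invoking Gan's Siegel--Weil identity for $F_4\times G_2\subset E_{8,4}$ \cite{GanSiegelWeil}. With respect to the Tamagawa-type measure giving each point of $\mathscr X$ weight $1/g_J$, the finite volume $G(\Q)\backslash G(\mathbb A)$ decomposes as $\bigsqcup_{[J]}\Gamma_J\backslash G(\R)$ with $\Gamma_J=G(\Q)\cap gK_fg^{-1}$. By \cref{prop:mass-stabilizers} this group is $\Aut_\Z(J)$, and $G(\R)$ is compact. Integrating the level-one minimal theta kernel against the constant function $1$ therefore splits into a finite sum of the class contributions $\Theta_J/g_J$. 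This uses exactly the per-class transport established in the proof of \cref{lem:scalar-theta-local} and applied to \eqref{eq:G2-lift-coefficient}, so no class number or stabilizer order enters. Gan's theorem identifies the theta lift of the constant function with a multiple of the spherical degenerate Eisenstein series on $G_2$. At level one and weight $4$, this multiple is a scalar multiple of $\mathscr E$. Call the scalar $\kappa$.

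To see that $\kappa\neq0$, we compare a single nondegenerate Fourier coefficient. The normalization \eqref{eq:G2-normalization} gives $c_{\Z^3}(\mathscr E)=1$. On the theta side, \eqref{eq:G2-embedding-coefficients} shows that $c_{\Z^3}(\Theta_J)=N(\Z^3,J)$ counts unital norm-preserving embeddings of $\Z^3$, that is, rational Jordan frames with all $e_i\in J$. These counts are nonnegative integers. For $J_I$ the diagonal frame contributes, and its orbit under the diagonal sign/permutation symmetries already gives a positive count. Hence $\sum_{[J]}N(\Z^3,J)/g_J>0$, and $\kappa$ equals this positive number. The only hard part is matching Gan's normalization of the kernel with Pollack's coefficient formula. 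We sidestep this by never computing $\kappa$ from measures: we define it through the $\Z^3$ coefficient.

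For \eqref{eq:scalar-genus-average} we pass to the scalar boundary. There are two ways to do this. The first is to restrict the same Siegel--Weil identity to the Fourier--Jacobi/constant-term boundary that produces the $F_4\times\mathrm{PGL}_2\subset E_{7,3}$ kernel of \cref{lem:scalar-theta-local}. The second, which we prefer, is to apply the Siegel--Weil identity for that smaller dual pair (again from \cite{GanSiegelWeil} together with \cite[Section~5.1]{ShanTheta}) directly to the constant function on $\mathscr X$. Either way, $\sum_{[J]}\vartheta_J/g_J$ is an Eisenstein series in $M_{12}(\mathrm{SL}_2(\Z))$, hence a multiple of $E_{12}$. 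By \cref{lem:scalar-theta-local} each $\vartheta_J$ has constant term $1$, so the sum has constant term $\sum 1/g_J=M$ by \eqref{eq:F4mass}. The multiple is therefore exactly $M$.

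The main obstacle is justifying that the lift of the constant function is Eisenstein, with no cuspidal component. This is exactly the content of the exceptional Siegel--Weil theorem, and it must be cited in the level-one, hyperspecial form. We then need that the boundary in the second identity is the Eisenstein series rather than a combination involving $\Delta$. For this we will use that the constant function is orthogonal to all cusp forms under the theta correspondence; equivalently, the lift of the trivial representation is residual.
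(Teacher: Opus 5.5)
Your argument for \eqref{eq:G2-genus-average} is the paper's. Your proof that $\kappa\ne0$ through the split coefficient, $\kappa=\sum_{[J]}N(\Z^3,J)/g_J>0$ (positive because of the diagonal frame of $J_I$), is correct and more explicit than what the paper states at that point.

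The gap is in \eqref{eq:scalar-genus-average}. Since $M_{12}(\mathrm{SL}_2(\Z))=\C E_{12}\oplus\C\Delta$ and the constant-term comparison is automatic, the entire content of this identity is that the $\Delta$-coordinates of the $\vartheta_J$, weighted by $1/g_J$, sum to zero. This is exactly what later forces $g_I=6|W(E_8)|$ and rootlessness.

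Your justification is that the constant function is orthogonal to cusp forms under the theta correspondence. By adjointness of the theta integral, $\langle\Theta(1),\Delta\rangle=\langle 1,\Theta(\Delta)\rangle$, and because $E_{12}\perp\Delta$, this orthogonality is literally equivalent to that vanishing. So it restates the claim rather than proving it; ``the lift of the trivial representation is residual'' likewise presupposes it. Your preferred route (b) would need a level-one Siegel--Weil theorem with no cuspidal term for $F_4\times\mathrm{PGL}_2\subset E_{7,3}$. The paper invokes \cite{GanSiegelWeil} only for the $G_2$ pair, and \cite[Section~5.1]{ShanTheta} supplies the kernel and its Fourier calculation, not an Eisenstein identification. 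As written, nothing in your proposal excludes a $\Delta$-term.

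The missing idea is to carry out your route (a) concretely, as the paper does. Apply to both sides of \eqref{eq:G2-genus-average} the constant term along the Heisenberg radical of $G_2$, followed by the holomorphic highest-weight component on the $\GL_2$ Levi. This operation commutes with integration over the compact $F_4$ quotient.

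On the theta side, you must check that it sends every $\Theta_J$ to the same nonzero multiple of $\vartheta_J$, namely Kim's holomorphic theta series restricted to $E$ \cite[Theorem~1.0.1]{PollackMinimal}. You must also check that no stray terms appear. A Freudenthal rank-one $w$ with $p_J(w)=0$ has $a=d=0$, $b^{\#}=c^{\#}=0$ and $\Tr(b)=\Tr(c)=0$, so definiteness gives $b=c=0$.

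On the Eisenstein side, the constant term of the spherical degenerate Eisenstein series consists of a character term and a Levi Eisenstein series induced from a character. There is no cuspidal $\GL_2$ datum, so its holomorphic component of weight $3\cdot4=12$ has zero cuspidal projection. This structural fact about $\mathscr E$ is what actually rules out $\Delta$. Your constant-term comparison then fixes the factor as $M$.
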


\begin{proof}
Gan's exceptional Siegel--Weil theorem identifies the theta lift of the
constant function on $G(\Q)\backslash G(\mathbb A)$ with the weight-$4$
Eisenstein series on $G_2$ \cite[Theorem~15.5]{GanSiegelWeil}.
The arithmetic realization in terms of Albert theta series is developed by
Gan--Gross--Savin \cite[Sections~9--10]{GanGrossSavin}.
Pollack recalls the theta integral in \cite[Theorem~4.2.1]{PollackMinimal};
the paragraph following that theorem explicitly records Gan's full equality,
rather than only equality modulo cusp forms.
Normalize the measure so that each finite double coset has weight $1/g_J$.
The integral then gives \eqref{eq:G2-genus-average}, up to the still
unspecified common scalar $\kappa$.

To take the scalar boundary, first take the constant term along the
Heisenberg radical of $G_2$, then its holomorphic highest-weight component
on the $\GL_2$ Levi.  This operation commutes with integration over the
compact $F_4$ quotient.  On the minimal $E_8$ kernel it gives a fixed nonzero multiple of Kim's
holomorphic exceptional theta series, whose diagonal restriction to the
identity of $J$ is $\vartheta_J/240$
\cite[Theorem~1.0.1]{PollackMinimal}.  There are no extra nonzero rank-one
terms with zero projection: if $p_J(w)=0$ and $\rk_F(w)\le1$, the rank-one
equations give $a=d=0$, $b^{\#}=c^{\#}=0$, and zero traces of $b,c$;
definiteness implies $b=c=0$.  This is also the constant-term observation
in \cite[Claim~5.0.3 and its proof]{PollackTheta}.

For the spherical $G_2$ Eisenstein series the same holomorphic component is
a nonzero multiple of the elliptic Eisenstein series of weight $12$
\cite[Theorem~3.2.5 and Proposition~3.3.2]{PollackMinimal}.  More explicitly,
the constant-term formula here has only a character term and an Eisenstein
series on the Levi: the inducing datum is a character, not a cuspidal
$\GL_2$ form.  Its holomorphic component therefore has zero cuspidal
projection, so no $\Delta$-term occurs.  Its weight is $3\cdot4$, because
the diagonal Levi acts with the cubic automorphy factor.  Consequently
the scalar average is proportional to $E_{12}$.  Every
$\vartheta_J$ has constant term $1$, so comparison of constant terms makes
the proportionality factor exactly $M$.  This obtains
\eqref{eq:scalar-genus-average} from the full theta integral, rather than
from a two-class identity.
\end{proof}

\subsection{The first scalar coefficient isolates the standard class}

\begin{proposition}[first coefficient saturation]\label{prop:first-saturation}
The standard bound is attained:
\[
 A:=g_I=6|W(E_8)|=4\,180\,377\,600.
\]
Every other class has $a(J)=0$ and a rootless trace-zero lattice.  The
residual genus $\mathscr X_{\mathrm{res}}=\mathscr X\setminus\{[J_I]\}$ is
nonempty and has mass
\begin{equation}\label{eq:residual-mass}
 M_{\mathrm{res}}:=\sum_{[J]\in\mathscr X_{\mathrm{res}}}\frac1{g_J}
 =M-\frac1A=\frac{600}{D_0}
 =\frac1{634\,023\,936}.
\end{equation}
For every residual class,
\begin{equation}\label{eq:residual-theta}
 \vartheta_J=E_{12}-\frac{65520}{691}\Delta,
 \qquad |\mathcal R_2(J)|=819,
\end{equation}
where
\begin{equation}\label{eq:R2}
 \mathcal R_2(J)=\{t\in J:t\ge0,\ t^{\#}=0,\ \Tr(t)=2\}.
\end{equation}
All elements of $\mathcal R_2(J)$ are primitive.
\end{proposition}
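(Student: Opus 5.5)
Compare the $q^1$ coefficients in the scalar genus identity \eqref{eq:scalar-genus-average}, using \eqref{eq:theta-general-a} for each class.

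\textbf{The $q$-coefficient.} The $q$-coefficient of $E_{12}=1+\frac{65520}{691}\sum\sigma_{11}(n)q^n$ is $65520/691$, and that of $\Delta$ is $1$. By \eqref{eq:theta-general-a}, the $q$-coefficient of $\vartheta_J$ is therefore
\[
 \frac{65520}{691}+240a(J)-\frac{65520}{691}=240a(J).
\]
The $q$-coefficient of $ME_{12}$ is $M\cdot 65520/691$. With $M=691/D_0$ this equals
\[
 \frac{65520}{D_0}=\frac{120}{|W(E_8)|}.
\]
This is a direct arithmetic check: $65520=2^4 3^2 5\cdot 7\cdot 13$, and $D_0/65520=2^{11}3^4 5\cdot 7=|W(E_8)|/2$. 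Hence
\[
 \sum_{[J]}\frac{240a(J)}{g_J}=\frac{120}{|W(E_8)|},
\]
which is the identity displayed in the introduction.

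\textbf{Saturation.} All terms are nonnegative. The class $J_I$ contributes $720/g_I$ by \eqref{eq:standard-a}, and \cref{prop:standard-bound} gives $720/g_I\ge 720/(6|W(E_8)|)=120/|W(E_8)|$. So this single term already attains or exceeds the total. Equality is therefore forced: $g_I=6|W(E_8)|$, and $a(J)=0$ for every other class. By \cref{lem:root-to-idempotent}, every root produces an element of $X_1$, so each residual $J^0$ is rootless.

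\textbf{Residual mass and nonemptiness.} Next compute
\[
 M-\frac1A=\frac{691-D_0/A}{D_0}.
\]
Since $D_0/A=7\cdot13=91$, this gives $600/D_0$. Factoring, $D_0/600=2^{12}3^5 7^2 13=634\,023\,936$. As this is positive, $\mathscr X_{\mathrm{res}}$ is nonempty.

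\textbf{Residual theta series and $\mathcal R_2$.} For a residual class, $a(J)=0$ in \eqref{eq:theta-general-a} gives \eqref{eq:residual-theta}. Its $q^2$-coefficient is
\[
 \frac{65520}{691}\bigl(\sigma_{11}(2)-\tau(2)\bigr)=\frac{65520}{691}(2049+24)=\frac{65520\cdot 2073}{691}.
\]
Since $2073=3\cdot 691$, this equals $196560=240\cdot 819$.

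The $q^2$ terms of $\vartheta_J$ come from positive rank-one $x$ with $\Tr(x)=2$, weighted by $240\sigma_3(c(x))$. A nonprimitive such $x$ would be $2y$ with $\Tr(y)=1$, $y\ge0$ and $y^\#=0$, i.e.\ $y\in X_1(J)$; this is impossible since $a(J)=0$. So all elements of $\mathcal R_2(J)$ are primitive and each has weight $240$, giving $|\mathcal R_2(J)|=819$.

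\textbf{Main obstacle.} The only delicate point is the exact arithmetic identity $65520/D_0=120/|W(E_8)|$ together with the $\Delta$-coefficient bookkeeping; both are checked by factorization.
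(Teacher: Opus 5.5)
Your proposal is correct and follows the paper's proof step for step: the $q$-coefficient of the scalar genus average saturated by the triality bound, then $D_0=91A$ for the residual mass, then the $q^2$-coefficient with all contents equal to $1$ because $a(J)=0$. One small slip is that $D_0/65520=2^{11}3^4 5\cdot 7$ equals $|W(E_8)|/120$, not $|W(E_8)|/2$; the value $|W(E_8)|/120$ is exactly what your (correct) identity $65520/D_0=120/|W(E_8)|$ requires.
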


\begin{proof}
Since $[q]E_{12}=65520/691$, \eqref{eq:scalar-genus-average} gives
\[
 \sum_{[J]}\frac{240a(J)}{g_J}
 =\frac{65520}{D_0}=\frac{720}{6|W(E_8)|}.
\]
By \cref{prop:standard-bound}, the standard contribution $720/g_I$ is at
least this total, and every other term is nonnegative.  Hence
$g_I=6|W(E_8)|$ and $a(J)=0$ on every remaining class.  Write $A=g_I$ for
the order now determined.  By \cref{lem:root-to-idempotent}, every residual
$J^0$ is rootless.  Since $D_0=91A$, subtracting $1/A$ from
\eqref{eq:F4mass} gives $M_{\mathrm{res}}=600/D_0>0$, proving both
\eqref{eq:residual-mass} and nonemptiness.

Substitute $a(J)=0$ into \eqref{eq:theta-general-a}.  Its coefficient of
$q^2$ is
\[
 \frac{65520}{691}\bigl(\sigma_{11}(2)+24\bigr)
 =\frac{65520}{691}(2049+24)=196560=240\cdot819.
\]
No rank-one trace-$2$ element is twice an integral one of trace $1$, so all
contents are $1$.  This proves the count.
\end{proof}

The standard series follows from $a(J_I)=3$ without a genus average\footnote{These
class assignments agree with \cite[Remark~5.1.3]{ShanTheta}.  The introductory
display in Section~1.2.3 of the cited arXiv version interchanges the two
assignments; the coefficient $240a(J_I)=720$ fixes the standard one.}:
\begin{equation}\label{eq:standard-theta}
 \vartheta_{J_I}=E_{12}+\frac{432000}{691}\Delta.
\end{equation}
Thus the arithmetic already gives the same extremal scalar series on every
residual class, without yet distinguishing their cubic structures.
We call an order \emph{extremal} if $a(J)=0$; by
\cref{cor:root-iff-frame}, proved independently in \cref{sec:venkov}, this is
equivalent to a rootless trace-zero lattice.

\subsection{Every residual class has a frame}

\begin{lemma}\label{lem:residual-frame-existence}
Every residual class has a primitive scale-$2$ frame, and its trace-zero
lattice has minimum $4$.
\end{lemma}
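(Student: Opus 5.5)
The plan is to construct the frame from the $819$ elements of $\mathcal R_2(J)$ in \cref{prop:first-saturation}. Fix a residual class $J$. By \cref{prop:first-saturation}, $a(J)=0$ and $J^0$ is rootless. Since $J^0$ is even, its minimum is then at least $4$. Every $t\in\mathcal R_2(J)$ is positive of Jordan rank one and trace $2$, so $t=2e$ for a rational primitive idempotent $e$. Moreover $e\notin J$, since otherwise $e\in X_1(J)$.

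\textbf{Reduction to an orthogonal pair.} Suppose $t,t'\in\mathcal R_2(J)$ satisfy $T(t,t')=0$, and write $t=2e$, $t'=2e'$. Then $T(e,e')=0$, so \cref{lem:positive-orthogonality} gives $e\circ e'=0$. Hence $e''=E-e-e'$ is an idempotent of trace $1$, and therefore primitive. Its double $t''=2E-t-t'$ lies in $J$, so $\{t,t',t''\}$ is a scale-$2$ frame. It is primitive because $a(J)=0$: no $e_i$ is integral. So it suffices to find one orthogonal pair in $\mathcal R_2(J)$.

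\textbf{Possible inner products.} For rank-one $t$ we have $T(t,t)=\Tr(t)^2=4$. For $t\neq t'$ put $\alpha=T(t,t')=4T(e,e')$. This is a nonnegative integer, and it is less than $4$ because distinct primitive idempotents have $T(e,e')<1$. The difference $t-t'$ lies in $J^0$ and has norm $8-2\alpha$. The minimum-$\ge4$ bound therefore forces $\alpha\in\{0,1,2\}$.

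\textbf{Counting step.} Fix $t$ and let $N_\alpha$ be the number of $t'\in\mathcal R_2(J)$ with $T(t,t')=\alpha$. Then $N_0+N_1+N_2=818$, and I need $N_0>0$ (and $N_2>0$ for the minimum statement). To obtain two further linear relations I would use spherical-design identities for the projections of $\mathcal R_2(J)$ to $J^0\otimes\R$:
\[
 \sum_{t'}T(t,t')=\frac{819\cdot 2\cdot 2}{3},\qquad
 \sum_{t'}P_2\bigl(t,t'\bigr)=0 .
\]
The first identity should follow from the trace: the average of $t'$ over $\mathcal R_2(J)$ is $\tfrac23E$ once the configuration is balanced, since a degree-one harmonic weighted theta series would be a cusp form of weight $13$. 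The second, for a degree-two harmonic polynomial $P_2$ on $J^0$, comes from the linear- and quadratic-weighted theta series being cusp forms of weights $13$ and $14$ for $\mathrm{SL}_2(\Z)$, hence zero. This uses the class-supported transport in the proof of \cref{lem:scalar-theta-local} with a harmonic weight.

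\textbf{Solving the system.} These three linear equations determine $(N_0,N_1,N_2)$. I expect a positive solution of the form $N_0=32$, $N_1=$ several hundred, $N_2>0$, consistent with the later count of $2457=819\cdot 3$ frames. Then $N_0>0$ gives the frame, and $N_2>0$ produces $t-t'\in J^0$ of norm $4$, so the minimum is exactly $4$.

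The main obstacle is justifying the degree-two design identity for an individual residual class without the later Venkov section. This requires the harmonic-weighted exceptional theta kernel and its transport to each class, together with the vanishing of $S_{13}$ and $S_{14}$. If that route is too heavy here, the fallback is a pigeonhole argument using only the first-moment identity together with $\alpha\le2$. Concretely, if $N_0=0$, then every $t'\ne t$ has $\alpha\ge1$, and an upper bound on $\sum_{t'}\alpha$ derived from the average $\tfrac23E$ and $819$ would give a contradiction. I would check first whether this first-moment bound alone already suffices.
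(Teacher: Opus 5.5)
\textbf{There is a gap at the step that shows $N_0>0$.} Your reduction to an orthogonal pair and your restriction $T(t,t')\in\{0,1,2\}$ agree with the paper. The first-moment identity you want is also available, independently of the saturation argument: it is \cref{cor:F4-linear-moment} at $m=2$, and all trace-$2$ contents equal $1$ here. But that identity only gives $N_1+2N_2=1088$, that is, $N_2=270+N_0$. This does prove $N_2>0$, and hence the minimum-$4$ claim. It is, however, satisfied by $(N_0,N_1,N_2)=(0,548,270)$, so your fallback pigeonhole cannot produce an orthogonal pair.

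Your main route therefore rests on the second-moment identity, and its justification as written is wrong. The linear exceptional series has weight $14$, not $13$ (\cref{prop:class-supported-theta}), and each differential raises the weight by $2$. The quadratic series therefore has weight $16$, where $S_{16}(\mathrm{SL}_2(\Z))=\C\Delta E_4\neq0$. To make it vanish you would need two things:
\begin{enumerate}
\item the degree-two harmonic kernel with its class-by-class transport, which the paper does not develop;
\item the extra observation that the $q$-coefficient is an empty sum because $X_1(J)=\varnothing$.
\end{enumerate}
With those inputs your system does solve, but to $(N_0,N_1,N_2)=(18,512,288)$, not $N_0=32$. Indeed $819N_0/6=2457$ forces $N_0=18$.

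\textbf{The missing idea is that no design identity is needed.} The paper uses the absolute bound for two-distance sets. The $819$ points lie on a sphere in the affine hyperplane $\Tr(y)=2$. Polynomial functions of degree at most $2$ on that sphere span a space of dimension at most $1+26+\binom{27}{2}-1=377$. Suppose no off-diagonal pairing were $0$. Then the functions $P_t(y)=(T(t,y)-1)(T(t,y)-2)$ would satisfy $P_t(u)=6\delta_{t,u}$ on $\mathcal R_2(J)$. That gives $819>377$ linearly independent functions, a contradiction, so an orthogonal pair exists.

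The same argument with $T(t,y)(T(t,y)-1)$ yields a pair with pairing $2$. For this half, your first-moment argument is a valid and pointwise stronger substitute. The paper's argument uses only the three admissible inner products and the count $819$, so it needs neither harmonic theta series nor the vanishing of any cusp space.
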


\begin{proof}
For distinct $t,u\in\mathcal R_2(J)$, positivity and Cauchy--Schwarz give
$0\le T(t,u)<4$.  The pairing is integral, and $T(t,u)=3$ would make $t-u$
a trace-zero root.  Hence $T(t,u)\in\{0,1,2\}$.

The $819$ elements lie on a sphere in the affine hyperplane $\Tr(y)=2$:
\[
 T\left(y-\frac23E,y-\frac23E\right)=\frac83.
\]
Restrictions of polynomials of degree at most $2$ to this sphere span a
space of dimension at most
$1+26+\binom{27}{2}-1=377$; the last subtraction is the sphere equation.
If no off-diagonal pairing were $0$, the polynomials
\[
 P_t(y)=(T(t,y)-1)(T(t,y)-2),\qquad t\in\mathcal R_2(J),
\]
would have values $P_t(u)=6\delta_{t,u}$ on the shell.  They would therefore
be $819$ independent functions in a space of dimension at most $377$.
There is an orthogonal pair $t_1=2e_1,t_2=2e_2$.
By \cref{lem:positive-orthogonality}, $e_1\circ e_2=0$; hence
$t_3=2E-t_1-t_2=2(E-e_1-e_2)$ completes a scale-$2$ frame in $J$.
It is primitive because $a(J)=0$.

The same argument with $P_t(y)=T(t,y)(T(t,y)-1)$ excludes the possibility
that all off-diagonal pairings belong to $\{0,1\}$.  Thus some pair has
$T(t,u)=2$, and $t-u\in J^0$ has norm $4$.  Rootlessness and evenness give
the opposite lower bound.
\end{proof}

This existence argument prevents any residual class from being invisible
to a frame coefficient.  It uses no line count or transitivity theorem.

\subsection{The cubic-ring coefficient for scale-\texorpdfstring{$2$}{2} frames}
\label{subsec:frame-coefficient}

Put
\[
 C_0=\Z^3,\qquad R_4=\Z[t]/(t^3-t),\qquad C_2=\Z+2\Z^3.
\]
Their discriminants are $1$, $4$, and $16$.  The subscript of $C_2$ denotes
its level inside the split order, not its discriminant.  The ring $C_2$ is
non-Gorenstein, so \eqref{eq:G2-embedding-coefficients} cannot be applied to
it without examining the lift formula.  Its interpretation through triples
of Jordan roots in the exceptional Albert order is already given in
\cite[Section~7, equation~(7.1)]{ElkiesGrossCubic}.  We require the following
weighted coefficient identity uniformly over the genus.

\begin{lemma}[the frame coefficient and its content correction]
\label{lem:G2-frame-coefficient}
For every definite integral Albert algebra in the genus,
\begin{equation}\label{eq:G2-frame-coefficients}
 c_{C_0}(\Theta_J)=6F_1(J),\qquad
 c_{C_2}(\Theta_J)=6F_2(J)+96F_1(J).
\end{equation}
In particular, on the residual genus $c_{C_2}(\Theta_J)=6F_2(J)$.
\end{lemma}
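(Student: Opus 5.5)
The plan is to evaluate the explicit lift formula \eqref{eq:G2-lift-coefficient} directly. For a cubic ring $C$, fix an integral binary cubic form $f$ representing it. For $C_0=\Z^3$ take $f=uv(u-v)$, or more conveniently $f=u^2v-uv^2$. For $C_2=\Z+2\Z^3$ take the form $f=2(u^2v-uv^2)$, whose content is $2$; this matches the index-$4$ suborder of $\Z^3$ having discriminant $16$. The task is then to enumerate the Freudenthal rank-one vectors $w=(a,b,c,d)\in W_J(\Z)$ with $p_J(w)=f$, weighted by $\sigma_4(\operatorname{cont}(w))$. Since the coefficient is invariant under the $\GL_2(\Z)$ action, we may use whichever representative form is most convenient.

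\textbf{Step 1: reduction to frames.} Rank one in the Freudenthal sense imposes the standard equations
\[
b^{\#}=ac,\qquad c^{\#}=db,\qquad T(b,c)=3ad .
\]
With $a=d=0$ these become $b^{\#}=c^{\#}=0$ and $b\times c=0$, together with the condition that $b\times c$ is proportional to $E$. The traces $\Tr(b)$ and $\Tr(c)$ are read off from $f$. Using $\GL_2(\Z)$ to put $f$ in a form with $a=d=0$, we reduce to pairs $(b,c)$ of integral Jordan rank-one elements with $\Tr(b)=\Tr(c)=m$, where $m\in\{1,2\}$. The pairing $T(b,c)$ is also fixed by $f$, and definiteness forces $b,c\ge0$ up to sign. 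By \cref{lem:positive-orthogonality} and the rank-one relations, such pairs correspond to ordered choices of two distinct idempotents $e_i,e_j$ from a frame whose scaled multiples $me_i$ lie in $J$. The third idempotent $E-e_i-e_j$ is then automatically integral, by the argument in \cref{lem:residual-frame-existence}.

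\textbf{Step 2: counting.} Counting ordered pairs gives $3\cdot2=6$ per frame. For $C_0$ every such $w$ is primitive, so $\sigma_4(1)=1$ and $c_{C_0}=6F_1$. For $C_2$ the frames of scale $2$ contribute $6F_2$ from primitive vectors. The non-primitive vectors are $w=2w'$ with $w'$ a $C_0$-vector, and each of these carries weight $\sigma_4(2)=17$. Since $F_2$ already counts each scale-$1$ frame once with weight $1$, the extra weight per scale-$1$ frame is $6\cdot16=96$. This yields
\[
c_{C_2}=6F_2+96F_1 .
\]
On the residual genus, $F_1=0$ because $a(J)=0$.

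\textbf{Main obstacle.} The delicate point is completeness: showing that every rank-one $w$ projecting to $f$ is $\GL_2$-equivalent to such a frame configuration. This includes excluding solutions with $a\ne0$ or $d\ne0$, and solutions not arising from frames. For this I would use the stabilizer of $f$ in $\GL_2(\Z)$ (more precisely, the unipotent translations $w\mapsto n(x)w$ by $x\in\Z E$) to normalize $a$ and $d$. I would then bound norms by the positivity and Cauchy--Schwarz arguments already used in \cref{lem:residual-frame-existence}, which forces the trace-$m$ rank-one components to be orthogonal. Care is also needed in the normalization: if the Fourier coefficient is not invariant under the full automorphism group of $f$, the factor $6$ must instead come from counting the $S_3$ symmetries of $\Z^3$ acting on the ordered frame. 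I would check this against $c_{C_0}(\Theta_{J_I})=6=N(\Z^3,J_I)$, which holds by \eqref{eq:G2-embedding-coefficients} and \eqref{eq:standard-a}.
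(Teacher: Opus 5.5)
Your strategy is the paper's. You evaluate the explicit lift formula on a cubic form with vanishing $u^3$ and $v^3$ coefficients and reduce lifts to vectors $(0,b,c,0)$. Definiteness and positive orthogonality then match these with ordered frames, $6$ per unordered frame, and each integral frame contributes an extra $(\sigma_4(2)-1)\cdot 6=96$.

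The gap is in the rank-one step, which is where the count actually lives.

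First, with $a=d=0$ the rank-one equations do not give $b\times c=0$ or $b\times c\in\Q E$. For a Jordan frame, $e_1\times e_2$ is a nonzero multiple of $e_3$, so for $b=2e_1$, $c=2e_2$ the element $b\times c$ is neither zero nor proportional to $E$. Taken literally, your conditions exclude every frame and give $c_{C_0}(\Theta_{J_I})=0$, contradicting the check you propose. What you need is $T(b,c)=3ad=0$; it comes from rank one, not from $f$.

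Second, and more seriously, the bijection needs the converse, which you never prove. Every ordered pair of Jordan-orthogonal scaled primitive idempotents must give a vector $(0,b,c,0)$ of Freudenthal rank one. The equations $b^{\#}=ac$, $c^{\#}=db$, $T(b,c)=3ad$ are consequences of rank one but do not characterize it in general: over $\C$, two rank-one elements with $T(b,c)=0$ need not give a rank-one vector. Positivity is what makes the definite case work. The paper supplies the converse by placing the two idempotents in a real Jordan frame. In the split diagonal cubic algebra, $(0,b,c,0)$ is then a decomposable tensor in the $2\times2\times2$ model, and polynomiality of the rank-one equations transfers this to the rational vector.

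The point you call the main obstacle is not one. Since $p_J(a,b,c,d)=au^3+\Tr(b)u^2v+\Tr(c)uv^2+dv^3$, the equation $p_J(w)=f$ forces $a$ and $d$ to be the extreme coefficients of $f$, hence $0$, so no normalization is needed. The translations $n(x)$, $x\in\Z E$, are also not in the stabilizer of $f$, since a nondegenerate binary cubic has finite stabilizer in $\GL_2(\Z)$; they could not normalize within a fixed fibre anyway.

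Note also that your form $2(u^2v-uv^2)$ forces $\Tr(c)=-2$, not $2$. Either replace $c$ by $-c$ or use $2uv(u+v)$ as the paper does.

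With $T(b,c)=0$ in place of $b\times c=0$ and the converse rank-one check added, your content argument finishes the proof as in the paper. The content divides $\Tr(b)=2$, and content $2$ means exactly that the frame is integral.
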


\begin{proof}
The form $uv(u+v)$ represents $C_0$.  Its lifts are $(0,e,f,0)$
with $e,f\in J$ of trace $1$.  The rank-one equations make them
orthogonal primitive idempotents, and the content is $1$.  The lifts
therefore count ordered integral Jordan frames, giving the first equality.
The binary cubic $2uv(u+v)$ represents $C_2$.  A lift in
\eqref{eq:G2-lift-coefficient} has the form $w=(0,b,c,0)$ with
$\Tr(b)=\Tr(c)=2$.  Its rank-one equations give
\[
 b^{\#}=c^{\#}=0,\qquad T(b,c)=0
\]
\cite{PollackLifting}.  By definiteness, $b/2$ and $c/2$ are positive
primitive idempotents; \cref{lem:positive-orthogonality} makes them Jordan
orthogonal.  Conversely, for an orthogonal pair the vector $(0,b,c,0)$ is
Freudenthal rank one.  This can be checked after putting the two idempotents
in a real Jordan frame: in its split diagonal cubic algebra the vector is
a decomposable tensor in the $2\times2\times2$ model.  The rank-one equations
are polynomial, so this verifies the original rational vector as well.
Thus lifts are in bijection with ordered scale-$2$ frames
$(b,c,2E-b-c)$.

A lift has content $1$ or $2$, since both traces are $2$.  Content $2$ means
that $b/2,c/2$, and hence $E-b/2-c/2$, are integral.  Such lifts are exactly
the $6F_1(J)$ ordered integral frames.  Their weight is
$\sigma_4(2)=17$ instead of $1$.  The total is therefore
$6F_2(J)+(17-1)6F_1(J)$.
\end{proof}

\begin{lemma}[the standard coefficients]\label{lem:standard-G2}
One has
\begin{equation}\label{eq:standard-G2-coefficients}
 c_{C_0}(\Theta_I)=6,\qquad
 c_{R_4}(\Theta_I)=726,\qquad
 c_{C_2}(\Theta_I)=2262.
\end{equation}
For every residual class, $c_{C_0}(\Theta_J)=c_{R_4}(\Theta_J)=0$.
\end{lemma}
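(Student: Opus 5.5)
The plan is to read each coefficient off the lift formula \eqref{eq:G2-lift-coefficient}, using \cref{lem:G2-frame-coefficient} for $C_0$ and $C_2$ and a direct embedding count for $R_4$. For $C_0$, \eqref{eq:standard-a} gives $F_1(J_I)=1$, so $c_{C_0}(\Theta_I)=6$. On a residual class $a(J)=0$ by \cref{prop:first-saturation}, so there is no scale-$1$ frame, $F_1(J)=0$, and $c_{C_0}(\Theta_J)=0$.

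For $R_4=\Z[t]/(t^3-t)$, which is Gorenstein (monogenic), I would use \eqref{eq:G2-embedding-coefficients}. An embedding preserving the unit and the cubic norm is determined by the image $r$ of $t$, and $r$ must have generic characteristic polynomial $t^3-t$. This means $\Tr(r)=0$, $S(r)=-1$ and $N(r)=0$, that is, $r\in J^0$ with $T(r,r)=2$ and $N(r)=0$. Conversely, the proof of \cref{lem:root-to-idempotent} shows that every $r\in R(J^0)$ has $N(r)=0$ and satisfies $r^3=r$. Hence $c_{R_4}(\Theta_J)=|R(J^0)|$. This equals $726$ for $J_I$ by \eqref{eq:standard-trace-lattice}, and $0$ on residual classes by rootlessness. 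The one point needing care is that distinct roots give distinct embeddings and that no content weighting enters. Both hold because $R_4$ is Gorenstein, so the Gan--Gross--Savin count applies without the $\sigma_4$ correction.

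For $C_2$, \cref{lem:G2-frame-coefficient} reduces the claim to $F_2(J_I)=361$, since $6\cdot361+96=2262$. I would first list the positive rank-one trace-$2$ elements of $J_I$ in coordinates, using $4=T(t,t)=\sum a_i^2+2\sum n(c_i)$. One type is $2e_i$, with three elements of content $2$. The other type has diagonal entries $(1,1,0)$ up to permutation and a single off-diagonal octonion $c$ of norm $1$ in the corresponding Peirce space; the rank-one condition $n(c)=a_ia_j=1$ holds, giving $3\cdot240=720$ primitive elements.

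Next I would sort unordered triples of these elements that sum to $2E$ and are pairwise orthogonal, which suffices by \cref{lem:positive-orthogonality}. The diagonal frame contributes $1$. If a frame contains $2e_k$, the other two members lie in the $(i,j)$ block with opposite off-diagonal entries $c$ and $-c$, and their pairing is $1+1-2n(c)=0$. This gives $120$ frames per block and $360$ in total. A frame containing no $2e_k$ would need three primitive elements from distinct blocks whose off-diagonal parts cancel, which is impossible. The same diagonal bookkeeping excludes mixed configurations, giving $F_2(J_I)=361$.

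The main obstacle is this exhaustive frame count, in particular checking that no frame mixes blocks. I expect the diagonal-entry bookkeeping to settle it quickly.
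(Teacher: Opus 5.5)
Your proposal is correct. For $C_0$ and $C_2$ it follows the paper: both reduce to $F_1$ and $F_2$ through \cref{lem:G2-frame-coefficient} and count $F_2(J_I)=1+3\cdot120=361$ in coordinates. The only small difference there is how the all-primitive case is excluded. You observe that the off-diagonal parts lie in distinct Peirce summands and cannot cancel in the sum $2E$. The paper notes instead that two such elements would have trace pairing $1$.

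The genuine difference is at $R_4$. You invoke the Gorenstein identification \eqref{eq:G2-embedding-coefficients} and characterize an embedding by the characteristic polynomial $t^3-t$ of the image of $t$. This is the method the paper itself uses for the discriminant-$49$ field in \cref{subsec:cubic-field49}. Here, however, the paper stays with the explicit minimal-kernel formula \eqref{eq:G2-lift-coefficient}. The rank-one Freudenthal lifts of $uv(u+2v)$ are the vectors $(0,e,c,0)$ with $e$ and $c/2$ orthogonal primitive idempotents. The map $r=c+e-E$, with inverse $e=E-r^2$ and $c=r^2+r$, is then a bijection onto $R(J^0)$.

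Your route is shorter, but it depends on the cited Gan--Gross--Savin theorem and on the asserted agreement of normalizations. You should also state the converse explicitly. A root has the three distinct eigenvalues $-1,0,1$, so $\Q[r]\cong\Q^3$ carries the norm of $R_4$. Hence $t\mapsto r$ is a unital norm-preserving embedding, integral because $r^2\in J$. The paper's route draws all three coefficients from the same lift formula, so everything feeding \cref{prop:G2-Eisenstein-coefficient} has one source. It also exhibits the spectral frame $\{e,\,c/2,\,E-e-c/2\}$ attached to each root.
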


\begin{proof}
The split coefficient follows from $F_1(J_I)=1$.
The form $uv(u+2v)$ represents $R_4$.  Its rank-one lifts have the
form $(0,e,c,0)$ with $\Tr(e)=1$, $\Tr(c)=2$, and $e,c/2$
orthogonal primitive idempotents.  Their content is $1$.  The map
$r=c+e-E$ identifies these lifts with $R(J^0)$: its inverse is
\[
 e=E-r^2,\qquad c=r^2+r.
\]
Indeed, a root has eigenvalues $-1,0,1$ by
\cref{lem:root-to-idempotent}, so these elements are integral and give the
required lift.  Thus $c_{R_4}(\Theta_J)=|R(J^0)|$, directly from the
minimal-kernel formula.  This gives $726$ for $J_I$ and $0$ on the residual
genus.

To count the standard scale-$2$ frames, a positive rank-one element of
trace $2$ has integral nonnegative diagonal entries.  It is either $2e_i$
or has diagonal a permutation of $(1,1,0)$ and one off-diagonal octonion
of norm $1$.  There are $240$ choices for that octonion.  A frame consisting
of three primitive such elements would have its three zero diagonal entries
in distinct positions.  Two of these elements then have trace pairing $1$:
their diagonal supports overlap once and their off-diagonal supports are
disjoint.  This contradicts orthogonality.  Hence every scale-$2$ frame
contains a doubled diagonal idempotent.

There is one doubled diagonal frame.  Every other frame contains exactly
one doubled diagonal idempotent; its two complementary elements have
opposite off-diagonal octonions in the remaining $2\times2$ corner.
Consequently
\[
 F_2(J_I)=1+3\cdot\frac{240}{2}=361.
\]
By \cref{lem:G2-frame-coefficient},
$c_{C_2}(\Theta_I)=6\cdot361+96=2262$.
\end{proof}

\begin{proposition}[the Eisenstein coefficient at discriminant $16$]
\label{prop:G2-Eisenstein-coefficient}
In the normalization \eqref{eq:G2-normalization},
\begin{equation}\label{eq:G2-Eisenstein-coefficients}
 c_{R_4}(\mathscr E)=121,\qquad c_{C_2}(\mathscr E)=16577,
 \qquad \kappa=\frac6A.
\end{equation}
\end{proposition}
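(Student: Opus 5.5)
The plan is to compute the Fourier coefficients of the spherical weight-$4$ Eisenstein series $\mathscr E$ on $G_2$ directly, and then to fix $\kappa$ by comparing one coefficient of the genus identity \eqref{eq:G2-genus-average}.

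\textbf{Eisenstein coefficients.} The nondegenerate coefficients of the level-one quaternionic Eisenstein series are known explicitly in terms of cubic rings, following Gan--Gross--Savin and Jiang--Rallis, with Pollack's normalization. For a cubic ring $C$, the coefficient $c_C(\mathscr E)$ is a product of local factors. At a Gorenstein ring it is a product over primes of local densities times a value of the Dedekind zeta function of the cubic algebra $C\otimes\Q$ at $s=-4$ (or $s=-5$, depending on the normalization). For the rings appearing here the algebra is the split one, $\Q^3$, and only the prime $2$ carries a nontrivial local factor. This factor is a divisor-type sum over the overorders of $C$ inside $\Z^3$, weighted by powers of the index; this matches the $\sigma_4(\operatorname{cont})$ weighting in \eqref{eq:G2-lift-coefficient}. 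Concretely, I would write $c_C(\mathscr E)=\sum_{C\subseteq C'\subseteq\Z^3}[C':C]^{k}\,\mu(C')$ with the exponent fixed by weight $4$.

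\textbf{Checking the formula.} I would verify the exponent by matching against the normalization $c_{\Z^3}(\mathscr E)=1$. For $R_4=\Z[t]/(t^3-t)$, of index $2$ in $\Z^3$, the formula should give $1+2^{k}\cdot(\ldots)=121$. For $C_2=\Z+2\Z^3$, of index $4$, the intermediate orders of index $2$ are the three rings isomorphic to $R_4$, together with $\Z^3$ itself, so the sum should give $16577$. I expect $16577=1+3\cdot(\ldots)+\ldots$ to decompose accordingly. A useful cross-check is $17\cdot 975+\ldots$, i.e.\ the $\sigma_4(2)=17$ weight. Pinning down the correct local formula at $2$ for the non-Gorenstein ring $C_2$ is the main obstacle. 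I would first try to derive it from the Fourier coefficient as a degenerate Whittaker/orbital sum: count rank-one vectors in the integral Freudenthal lattice of a split model weighted by $\sigma_4$, using the Siegel--Weil-type local integral. If that fails, I would compute the local singular series at $2$ by counting solutions of $p(w)=f$ modulo $2^n$.

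\textbf{Fixing $\kappa$.} Take the $C_0$ coefficient of \eqref{eq:G2-genus-average}. By \cref{lem:standard-G2}, only $J_I$ contributes, with $c_{C_0}=6$; residual classes contribute $0$. This gives $6/A=\kappa\cdot 1$, hence $\kappa=6/A$ by \cref{prop:first-saturation}.

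\textbf{Consistency check.} The $R_4$ coefficient gives $726/A=\kappa\,c_{R_4}(\mathscr E)=6\cdot121/A$, which is consistent since $726=6\cdot121$. This confirms the value $121$, even though the argument above derives it independently from the Eisenstein formula. The value $16577$ then feeds the residual framed mass. Since $6\cdot16577/A-2262/A$ equals $6\sum_{\mathrm{res}}F_2/g_J$, it gives $\sum F_2/g_J=(16577-377)/A=16200/A$.
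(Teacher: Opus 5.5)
\textbf{What works.} Your determination of $\kappa$ is the paper's: only $J_I$ contributes to the $C_0$ coefficient, so $\kappa=6/A$. Your ``consistency check'' at $R_4$ is in fact the paper's derivation of $121$. Every residual class is rootless, so the $R_4$ coefficient of \eqref{eq:G2-genus-average} reads $726/A=\kappa\,c_{R_4}(\mathscr E)$, and this determines $c_{R_4}(\mathscr E)=121$ outright. No Eisenstein formula is needed there, and you never actually supplied one: the ellipsis in $1+2^{k}(\ldots)$ is never filled. A small correction: in weight $4$ the relevant special value is $\zeta_C(1-k)=\zeta_C(-3)$, not $s=-4$ or $-5$. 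The check $120^3\zeta_{R_4}(-3)=1-8+128=121$ confirms this.

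\textbf{The gap.} The proposal does not establish $c_{C_2}(\mathscr E)=16577$, which is the substantive content of the proposition. Your ansatz $\sum_{C\subseteq C'\subseteq\Z^3}[C':C]^{k}\mu(C')$ has an undetermined exponent and undetermined weights. At $C_2$ it reads $\mu(C_2)+3\cdot2^{k}\mu(R_4)+4^{k}$, and the two known values $c_{\Z^3}=1$ and $c_{R_4}=121$ leave $\mu(C_2)$ completely free. You yourself defer the non-Gorenstein local factor at $2$ to an unspecified Whittaker or singular-series computation. The genus identity cannot supply the value either, because the residual contribution $6F_2(J)/g_J$ is exactly the unknown this coefficient is meant to determine.

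\textbf{The missing idea.} The missing idea is a Hecke relation, which avoids any local density at a non-Gorenstein ring. The spherical Eisenstein series is an eigenform of $T_7(2)$, and its eigenvalue can be read off from the spherical parameter (Levi parameters $p^4,p^3$):
\[
\lambda_7(2)=2^7+2^4+2^3+1+2^{-3}+2^{-4}+2^{-7}=\tfrac{19609}{128}.
\]
Now apply Gan--Gross--Savin's depth-zero formula (their Proposition~15.6) at $R=\Z^3$, $p=2$. Here $R_1=C_2$, the three intermediate rings are copies of $R_4$, and the overorder sum is empty. So the formula expresses $c_{\Z^3}(T_7(2)\mathscr E)$ through $c_{\Z^3}$, $c_{R_4}$ and $c_{C_2}$ alone, namely $19609=128+24\cdot121+c_{C_2}(\mathscr E)$, which gives $16577$.

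Alternatively, you could back out $16577$ from the shell count $F_2(J)=2457$ and $M_{\mathrm{res}}$, since $6\cdot16577-2262=97200$. But then the $G_2$ coefficient would no longer give a route to the framed mass independent of the local shell theorem.
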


\begin{proof}
The split coefficient of \eqref{eq:G2-genus-average} and
\cref{prop:first-saturation,lem:standard-G2} give $\kappa=6/A$.
The $R_4$ coefficient then gives $c_{R_4}(\mathscr E)=726/6=121$.

Let $T_7(p)$ be the normalized Hecke operator whose Satake transform
is the character of the seven-dimensional representation of the dual group.
For a totally real cubic ring $R$ of $p$-depth zero, meaning that its binary
cubic form is not divisible by $p$, put $R_1=\Z+pR$ and let $n_R$ be the
number of rings strictly between $R_1$ and $R$.  The depth-zero case of
\cite[Proposition~15.6, p.~159]{GanGrossSavin} states, for a modular form $f$
of even weight $k$, that
\[
 \begin{aligned}
 c_R(T_7(p)f)
  ={}&p^{k-1}\!\sum_{\substack{R\subset R'\\{}[R':R]=p}}c_{R'}(f)
       +p^{-1}(n_R-1)c_R(f)\\
    &+p^{-k}\!\sum_{R_1\subsetneq R'\subsetneq R}c_{R'}(f)
       +p^{1-2k}c_{R_1}(f).
 \end{aligned}
\]
The first sum is over integral overorders in $R\otimes\Q$; the second is
over actual intermediate subrings, each of index $p$ in $R$, not over
isomorphism classes.

At weight $4$, the unnormalized inducing character is
$|\det|^{-5}=\delta_P^{5/3}$ \cite[Section~9]{GanGrossSavin}.
Since $\delta_P=|\det|^{-3}$, normalized induction has character
$|\det|^{-7/2}$, with spherical Levi parameters $p^4,p^3$.
The seven-dimensional representation restricts to the sum of the trivial,
standard, dual standard, determinant, and inverse determinant representations
\cite[Section~13]{GanGrossSavin}.  Its eigenvalue is therefore
\[
 \lambda_7(p)=p^7+p^4+p^3+1+p^{-3}+p^{-4}+p^{-7}.
\]
For $R=C_0=\Z^3$ and $p=2$, one has $R_1=C_2$ and $n_R=3$:
the three intermediate rings identify one pair of coordinates modulo $2$,
so each is isomorphic to $R_4$.  The overorder sum is empty because $C_0$
is maximal.  Taking $k=4$ and $f=\mathscr E$ gives
\[
 \lambda_7(2)c_{C_0}(\mathscr E)
 =\frac{3-1}{2}c_{C_0}(\mathscr E)
   +\frac3{16}c_{R_4}(\mathscr E)
   +\frac1{128}c_{C_2}(\mathscr E).
\]
Since $\lambda_7(2)=19609/128$, the last coefficient is
\[
 19609-128-24\cdot121=16577.
\]
This computation uses the spherical Eisenstein parameter and the standard
coefficients, not the number of frames in any residual class.
\end{proof}

The value $121$ also has an ideal-zeta interpretation.  Write
$\zeta_{R_4}(s)=\sum_{\mathfrak a}[R_4:\mathfrak a]^{-s}$, summing over
finite-index ideals.  Since $R_4\cong\Z\times\Z[u]/(u^2-1)$, the only
nonmaximal local factor occurs at $2$.  Its quadratic factor at $2$ is
$\mathcal O_2=\{(x,y)\in\Z_2^2:x\equiv y\pmod2\}$.
A proper ideal has projections $2^a\Z_2,2^b\Z_2$ with $a,b\ge1$.
Multiplication by $(2,0)$ and $(0,2)$ shows that, after rescaling the two
coordinates, its image modulo $2$ is either $\F_2^2$ or the diagonal line.
Thus the ideals of index $2^n$, $n\ge1$, are exactly
\[
 \begin{array}{ll}
 2^a\Z_2\times2^b\Z_2,&a,b\ge1,\quad a+b=n+1,\\
 (2^a,2^b)\mathcal O_2,&a,b\ge1,\quad a+b=n.
 \end{array}
\]
There are $n$ of the first kind and $n-1$ of the second.  Together with the
unit ideal, this gives the ideal series $(1-t+2t^2)/(1-t)^2$, where
$t=2^{-s}$.
Consequently
\[
 \zeta_{R_4}(s)=\zeta(s)^3(1-2^{-s}+2^{1-2s}),\qquad
 120^3\zeta_{R_4}(-3)=1-8+128=121.
\]
Thus the coefficient at $R_4$ agrees with this independently computed
ideal-zeta value.

\begin{corollary}[mass of the residual framed genus]\label{cor:residual-frame-mass}
One has
\begin{equation}\label{eq:residual-frame-mass}
 \sum_{[J]\in\mathscr X_{\mathrm{res}}}\frac{F_2(J)}{g_J}
 =\frac{16200}{A}=\frac1{258048}.
\end{equation}
\end{corollary}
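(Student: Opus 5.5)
Compare the $C_2$ Fourier coefficients on both sides of the genus-wide identity \eqref{eq:G2-genus-average}, using the constants of \cref{prop:G2-Eisenstein-coefficient}. With $\kappa=6/A$ and $c_{C_2}(\mathscr E)=16577$, the right-hand side has $C_2$ coefficient
\[
\frac{6\cdot16577}{A}.
\]
On the left, \cref{lem:G2-frame-coefficient} gives $c_{C_2}(\Theta_J)=6F_2(J)+96F_1(J)$ for every class. \cref{prop:first-saturation} shows that $J_I$ is the only class with $a(J)\neq0$, so $F_1(J)=0$ on the residual genus, and its order is $g_I=A$. Hence the left-hand side equals
\[
\frac{c_{C_2}(\Theta_I)}{A}+\sum_{[J]\in\mathscr X_{\mathrm{res}}}\frac{6F_2(J)}{g_J}.
\]

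By \cref{lem:standard-G2}, $c_{C_2}(\Theta_I)=2262$. Subtracting this standard contribution and dividing by $6$ gives
\[
\sum_{[J]\in\mathscr X_{\mathrm{res}}}\frac{F_2(J)}{g_J}
=\frac{6\cdot16577-2262}{6A}=\frac{16577-377}{A}=\frac{16200}{A}.
\]
It remains to evaluate this number. From $A=2^{15}3^65^27$ and $16200=2^33^45^2$ we get
\[
\frac{A}{16200}=2^{12}3^2 7=258048.
\]

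Each step is a direct substitution into results already proved, so I expect no real obstacle. The one point needing care is that the identity must be used over the whole genus and not assume a class number. This is secured because every summand is determined by class-independent data: the standard order $A$ and the standard coefficient $2262$ come from the earlier lemmas, and the residual terms are expressed through the uniform formula of \cref{lem:G2-frame-coefficient}. No residual class can escape the count, because the identity is summed over all of $\mathscr X$.
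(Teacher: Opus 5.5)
Your proposal is correct and follows the paper's own proof: it compares the $C_2$ coefficient in \eqref{eq:G2-genus-average} with $\kappa=6/A$ and $c_{C_2}(\mathscr E)=16577$, subtracts the standard term $2262/A$, and divides by $6$ to get $97200/(6A)=16200/A$. Your arithmetic, including $A/16200=2^{12}3^2 7=258048$, checks out.
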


\begin{proof}
Take the $C_2$ coefficient in \eqref{eq:G2-genus-average}, whose scalar is now
$6/A$, and subtract the standard contribution.  By
\cref{lem:G2-frame-coefficient,lem:standard-G2,prop:G2-Eisenstein-coefficient},
\[
 \sum_{[J]\in\mathscr X_{\mathrm{res}}}\frac{6F_2(J)}{g_J}
 =\frac{6\cdot16577-2262}{A}=\frac{97200}{A}.
\]
Divide by $6$.  No residual automorphism order or frame count has been used.
\end{proof}

\section{The uniform local frame-stabilizer bound}\label{sec:bounds}

\subsection{The shell attached to an arbitrary residual class}

Let $J$ be any class in $\mathscr X_{\mathrm{res}}$, and set $L=J^0$.
By \cref{prop:first-saturation,lem:residual-frame-existence}, this lattice
is even of rank $26$, determinant $3$, and minimum $4$.  For
$t\in\mathcal R_2(J)$ put
\begin{equation}\label{eq:projection}
 x_t=t-\frac23E.
\end{equation}
These vectors have norm $8/3$, lie in $L^\vee$, and belong to the same
nonzero discriminant class $C_+$, since their pairwise differences lie in
$L$.  Write $X=(C_+)_{8/3}$.  The shell counts and inner-product
multiplicities below occur in \cite[Proposition~9.1]{ElkiesGrossCubic}, whose
Section~9 relates them to the hexagon.  We use the uniform local formulation
in \cite[Sections~2--3]{Hoehn26}, proved by harmonic theta identities and
integrality for an arbitrary such lattice, rather than a global uniqueness
conclusion.

\begin{theorem}[local rank-$26$ shell geometry]\label{thm:frame-theorem}
The minimum in $C_+$ is $8/3$ and $|X|=819$.  For each $x\in X$, the inner
products with the other vectors are
\[
 -\frac43,\quad\frac23,\quad-\frac13
 \qquad\text{with multiplicities}\qquad18,\quad288,\quad512.
\]
If $(x,y)=-4/3$, then $-x-y\in X$.  The zero-sum triples are the lines of a
generalized hexagon of order $(2,8)$; the three inner products correspond
respectively to point-graph distances $1,2,3$.
\end{theorem}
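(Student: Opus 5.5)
Since $L=J^0$ is even of determinant $3$ and minimum $4$, I would encode the class $C_+$ by an auxiliary even unimodular lattice, or equivalently by the theta series of $C_+$ as a vector-valued modular form. The main tools are harmonic theta identities for this coset, constrained by rootlessness.

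\textbf{Step 1: shell count and minimum.} The coset theta series $\theta_{C_+}$ lies in the space of weight-$13$ vector-valued forms for the Weil representation of $A_2$-type discriminant $\Z/3$. Equivalently, $L\perp A_2$ glues to an even unimodular lattice of rank $28$, so
\[
 \theta_L+2\,\theta_{C_+}\theta_{A_2,+}\text{-style combinations}
\]
are level-one forms of weight $14$, a one-dimensional space spanned by $E_{14}$. The relevant space is small. Fixing the data that $L$ has no norm-$2$ vectors, together with the constant term, determines $\theta_L$ and $\theta_{C_+}$ completely. From this I read off:
\begin{itemize}
\item no vectors of norm $2/3$ in $C_+$, so the minimum is $8/3$ (these are the vectors $x_t$, and \cref{prop:first-saturation} already exhibits $819$ of them);
\item the number of norm-$8/3$ vectors, which is exactly $819$.
\end{itemize}
Thus $X=\{x_t\}$.

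\textbf{Step 2: inner-product distribution.} For $x,y\in X$ distinct, $(x,y)\in\frac13+\Z$, and Cauchy--Schwarz gives $|(x,y)|\le 8/3$. Since $x-y\in L$ has norm $\ge4$ and $x+y\in C_-$, the possible values are $-4/3$, $-1/3$, $2/3$, and possibly $5/3$. The value $5/3$ would make $x-y$ of norm $2$, contradicting rootlessness, so it is excluded.

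For a fixed $x$, I consider the harmonic theta series $\sum_{y\in C_+}P(y)q^{(y,y)/2}$ with $P$ a harmonic polynomial of degree $1$ or $2$ in the direction $x$. These are cusp forms of weights $14$ and $15$ in the Weil-representation space. They vanish on the relevant shell, so $X$ is a spherical $2$-design (in fact a $3$-design, via an odd-degree argument). This yields three linear equations for the three multiplicities:
\[
 \sum_y 1=818,\qquad \sum_y (x,y)=-\tfrac83,\qquad \sum_y (x,y)^2=\tfrac{819}{26}\cdot\Bigl(\tfrac83\Bigr)^2-\Bigl(\tfrac83\Bigr)^2.
\]
Solving gives $18$, $288$, $512$.

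\textbf{Step 3: lines.} If $(x,y)=-4/3$, then $-x-y$ has norm $8/3+8/3-8/3=8/3$. Also $-x-y\equiv -2C_+=C_+$ modulo $L$, so $-x-y\in X$. Each $x$ therefore lies on $9$ zero-sum triples, and any two lines meet in at most one point. This gives a partial linear space of order $(2,8)$.

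\textbf{Step 4: hexagon axioms.} The point graph has valency $18$ and, from Step 2, distance-$2$ and distance-$3$ counts of $288$ and $512$. I would show that the angle classes form a distance-regular graph with the intersection array of the generalized hexagon of order $(2,8)$. The intersection numbers are computed by fixing two points and applying the design identities to products of degree-$\le3$ polynomials. The hexagon's intersection array then implies no ordinary triangles, quadrangles, or pentagons, which gives the generalized hexagon axioms.

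The main obstacle is Step 4: extracting the triple intersection counts requires a design strength beyond what the weight-$14$ constraint alone obviously gives. I would try proving $X$ is a spherical $5$-design by showing that the corresponding harmonic weight spaces (weights up to $18$) admit no cusp forms compatible with the known shell counts. Failing that, I would use the gluing with $E_8$-type Niemeier reconstruction, citing \cite{Hoehn26}.
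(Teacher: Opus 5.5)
Your overall strategy, harmonic theta identities for the coset $C_+$ together with integrality, is the one the paper relies on: it gives no proof of its own and cites \cite{Hoehn26}, Sections~2--3. There are, however, two genuine gaps, in Steps~1 and~4.

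\textbf{Step~1.} $L\perp A_2$ cannot glue to an even unimodular lattice. Rank $28$ is not divisible by $8$, and both discriminant forms take the value $2/3$, so there is no isotropic glue; the correct partner is $E_6$, in rank $32$. More seriously, the symmetric weight-$13$ space for this Weil representation is not one-dimensional. It is three-dimensional, spanned by $FE_4^3$, $F\Delta$ and $(\partial F)E_4E_6$, where $F=(\theta_{A_2},\theta_{A_2+\gamma})$ and $\partial$ is the Serre derivative. Already $A_2\perp E_8^3$ and $A_2\perp D_{24}^+$ have the same constant term but different root counts.

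The constant term and rootlessness therefore leave a one-parameter family, indexed by $m=|(C_+)_{2/3}|$. Here $m\in\{0,1\}$, since two such vectors would differ by a nonzero vector of $L$ of norm at most $8/3$. The family gives $|(C_+)_{8/3}|=819-278m$, so $m=1$ yields an integral series with $541$ vectors. The minimum $8/3$ cannot be read off; it must be supplied.

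In this paper it comes from $a(J)=0$ (\cref{prop:first-saturation}). If $y\in C_+$ had norm $2/3$, then $e=\tfrac13E-y\in J$ would satisfy $\Tr(e)=T(e,e)=1$. The eigenvalue argument of \cref{lem:root-to-idempotent} then makes $e$ a trace-one idempotent, a contradiction. With $m=0$ as a third condition the series is determined, giving $|X|=819$. Your Step~2 then goes through: degree $1$ lands in $\C\,\eta^8E_{10}$, and degree $2$ lands in a two-dimensional cusp space killed by the root shell and the $2/3$-shell. Note that $(x,y)\in\tfrac23+\Z$, not $\tfrac13+\Z$.

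\textbf{Step~4.} The route you propose cannot work, because $X$ is not a spherical $3$-design, let alone a $5$-design. Odd degrees live in the antisymmetric part, which in weight $16$ is the two-dimensional space $\eta^8M_{12}$. Only its $q^{1/3}$-term is forced to vanish, leaving a multiple of $\eta^8\Delta$ whose leading coefficient is exactly the sum over $X$. In fact the moment identity in \cref{rem:hoggar} gives $\sum_{x\in X}(a,x)^3=24\Tr(a^3)$ for $a\in J^0$, which is not zero in general. Consequently your Step~2 aside that $X$ is a $3$-design is also false.

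Moments of degree at most $3$ are too weak. Take $(x,y)=2/3$, and let $\mu$ be the number of common neighbours of $x,y$ and $p=\#\{z:(z,x)=2/3,\ (z,y)=-1/3\}$. Those moments give only the single relation $2\mu+p=146$.

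What does work is the even degree-$4$ identity. In weight $17$ the cusp space is again two-dimensional and is killed by the same two leading conditions, so $X\cup(-X)$ is a spherical $5$-design. Combine this with integrality:
\begin{itemize}
\item Each line $\{x,w,w'\}$ through $x$ satisfies $(x+w+w',y)=0$. Hence every line through $x$ meets $y$ in inner products $\{-4/3,2/3\}$ or $\{-1/3,-1/3\}$.
\item A path $x\sim w\sim y$ with $(x,y)=-1/3$ is impossible, since $x+y+w\in L$ would then have norm $2$.
\item With these, the fourth moment $\sum_z(z,x)^2(z,y)^2=64$ gives $p=34\mu+110$, hence $\mu=1$.
\item The same line-sum argument gives $c_3=9$ and the near-polygon property.
\end{itemize}
The point graph then has intersection array $\{18,16,16;1,1,9\}$ and is the collinearity graph of a generalized hexagon of order $(2,8)$. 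Without an argument of this kind, or the reconstruction in \cite{Hoehn26}, Step~4 is not a proof.
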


The injective map $t\mapsto x_t$ is a bijection onto $X$, since both sets
have $819$ elements.  We need the local neighbours and their gluing, rather
than a total line count, for the stabilizer bound.

\begin{proposition}\label{prop:lines-frames}
The zero-sum triples in $X$ are exactly the unordered primitive scale-$2$
Jordan frames of $J$ under \eqref{eq:projection}.
\end{proposition}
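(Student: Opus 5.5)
We must show that the zero-sum triples in $X$ correspond exactly to primitive scale-$2$ frames. The map $t\mapsto x_t$ is a bijection from $\mathcal R_2(J)$ onto $X$. For $t,u\in\mathcal R_2(J)$ we have
\[
 (x_t,x_u)=T(t,u)-\tfrac43\cdot 2+\tfrac49\cdot 3=T(t,u)-\tfrac43 .
\]
Indeed, $T(t,E)=2$ and $T(E,E)=3$. Hence the values $T(t,u)\in\{0,1,2\}$ from \cref{lem:residual-frame-existence} correspond to the inner products $-\tfrac43,-\tfrac13,\tfrac23$, and $(x_t,x_u)=-4/3$ exactly when $T(t,u)=0$. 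I will use this dictionary in both directions.

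Frames to triples: let $\{t_1,t_2,t_3\}$ be a primitive scale-$2$ frame with $t_i=2e_i$. Each $t_i$ is positive with $t_i^{\#}=0$ (rank one) and trace $2$, so $t_i\in\mathcal R_2(J)$. Primitivity plus $a(J)=0$ is not even needed here. The three elements are distinct, and
\[
 x_{t_1}+x_{t_2}+x_{t_3}=2E-2E=0 .
\]
So the image is a zero-sum triple in $X$. Moreover $T(t_i,t_j)=4T(e_i,e_j)=0$, consistent with pairwise inner products $-4/3$.

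Triples to frames: let $x,y,z\in X$ be distinct with $x+y+z=0$, and write $x=x_t$, $y=x_u$, $z=x_v$. Then $t+u+v=2E$. Since $(x,x)=8/3$, we get $(x,y)+(x,z)=-(x,x)=-8/3$. Both inner products lie in $\{-\tfrac43,\tfrac23,-\tfrac13\}$, and the only pair summing to $-8/3$ is $-\tfrac43,-\tfrac43$. Hence all three pairings $T$ vanish. Put $e=t/2$ and $f=u/2$. Over $\R$ these are positive rank-one elements of trace $1$, hence primitive idempotents. \Cref{lem:positive-orthogonality} then gives $e\circ f=0$, so $E-e-f$ is an idempotent and equals $v/2$. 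Thus $\{t,u,v\}$ is a scale-$2$ frame in $J$. It is primitive because $a(J)=0$ (\cref{prop:first-saturation}): none of $e,f,v/2$ can be integral.

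Injectivity: both directions use the bijection $t\mapsto x_t$, so unordered frames and unordered triples correspond one-to-one. By \cref{thm:frame-theorem}, these triples are precisely the hexagon lines, which gives the statement.

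The main obstacle is the step "positive rank-one of trace $1$ over $\R$ implies idempotent". This uses the spectral theorem: a rank-one positive element has a single nonzero eigenvalue, equal to its trace. The paper already invokes this fact when defining $X_1$, so it suffices to cite it.
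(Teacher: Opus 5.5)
Your proof is correct and follows essentially the same route as the paper. Both use the dictionary $(x_t,x_u)=T(t,u)-\tfrac43$, apply \cref{lem:positive-orthogonality} to turn trace orthogonality of $t/2,u/2$ into Jordan orthogonality, complete the frame by $2E-t-u$, and get primitivity from $a(J)=0$; the only cosmetic difference is that you derive pairwise orthogonality directly from the zero-sum condition and the list of inner products, whereas the paper phrases it through adjacency and the completion property in \cref{thm:frame-theorem}.
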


\begin{proof}
Write $t_i=2e_i$ for positive primitive idempotents.  Then
\[
 T(x_{t_i},x_{t_j})=T(t_i,t_j)-\frac43.
\]
Thus adjacency is equivalent to $T(e_i,e_j)=0$, which is Jordan
orthogonality by \cref{lem:positive-orthogonality}.  The third element is
$t_k=2E-t_i-t_j\in J$, with $x_{t_k}=-x_{t_i}-x_{t_j}$.
Conversely every scale-$2$ frame gives such a triple.  Primitivity follows
from $a(J)=0$.
\end{proof}

As a geometric check on \cref{cor:residual-frame-mass}, the preceding results
give nine lines through each of $819$ points and three points on each line.
Thus, for every residual class, $F_2(J)=819\cdot9/3=2457$, and
\eqref{eq:residual-mass} gives
\[
 \sum_{[J]\in\mathscr X_{\mathrm{res}}}\frac{F_2(J)}{g_J}
   =2457M_{\mathrm{res}}=\frac1{258048}.
\]
Equivalently, $6\cdot16577-2262=6\cdot2457\,A M_{\mathrm{res}}$.  This is a geometric
derivation of the same framed mass; the $G_2$ calculation obtains it before
invoking the local shell theorem.

\subsection{The oriented deletion datum}\label{subsec:binary-shadow}

Let $J$ be any residual class, and fix a primitive scale-$2$ frame
$\mathcal F=\{t_1,t_2,t_3\}$ and its line $\ell\subset X$.
The discriminant-$16$ complement and its $A_1^{24}$ Niemeier completion
are studied in \cite[Section~7]{ElkiesGrossCubic} and used in the
reconstruction outlined in its Section~9.  Here we retain the full marked
gluing to obtain a stabilizer bound valid for every residual class.
The rank-$27$ construction in \cite[Section~6]{Hoehn26} gives a particularly useful
quadratic description of its stabilizer.  In the Albert model its odd unimodular
lattice is the trace lattice $\Lambda_J=(J,T)$ itself: take
\[
 \xi=-E,\qquad u_i=t_i-E=x_{t_i}+\xi/3.
\]
The characteristic vector $\xi$ has norm $3$, and
\[
 u_1+u_2+u_3=\xi,\qquad
 S_{\ell}=\langle u_1,u_2,u_3\rangle_{\Z},\qquad
 \operatorname{Gram}(S_{\ell})=4I_3-J_3.
\]
Here $J_3$ in the Gram matrix is the all-one matrix.  Equivalently, $\Lambda_J$ is the
index-three gluing of $L\perp\Z \xi$ determined by $X$; the identity above fixes
which discriminant class is used.

In the orthonormal basis $f_i=(u_i-\xi)/2=e_i$ of $S_{\ell}\otimes\Q$, let
$I_{\ell}=\bigoplus_i\Z f_i$.  Then
\[
 S_{\ell}=\{(a_1,a_2,a_3)\in\Z^3:a_1\equiv a_2\equiv a_3\pmod2\},
 \qquad I_{\ell}/S_{\ell}\cong\F_2^2.
\]
Let $K=S_{\ell}^\perp\cap \Lambda_J$.  Write $D(L)=L^\vee/L$ for the discriminant group of an
integral lattice $L$.  We record precisely the deletion input needed here.

\begin{proposition}[line deletion]\label{prop:deletion-package}
The lattice $S_{\ell}$ is primitive in $\Lambda_J$.  Its complement $K$ is even of rank $24$,
determinant $16$, and minimum at least $4$.  The gluing of $S_{\ell}\perp K$ to $\Lambda_J$
determines an index-four completion
\[
 K\subset\Gamma\cong N(A_1^{24}).
\]
The $24$ root pairs of $\Gamma$ fall into three Golay octads
$\mathcal T=\{O_1,O_2,O_3\}$, canonically corresponding to the three points of $\ell$.
Writing $W=\Gamma/K\cong\F_2^2$, the quotient map
\begin{equation}\label{eq:lambda}
 \lambda:\Gamma\longrightarrow W
\end{equation}
labels roots in $O_i$ by $w_i\in W\setminus\{0\}$.  Writing $q_K(\bar y)=y^2\pmod{2\Z}$, the full gluing is an
anti-isometry
\begin{equation}\label{eq:phi}
 \phi:(D(S_{\ell}),q_{S_{\ell},\xi})\longrightarrow(D(K),q_K),
 \qquad q_{S_{\ell},\xi}(\bar s)=s^2-(s,\xi)\pmod{2\Z},
\end{equation}
so $q_K(\phi(\bar s))=-q_{S_{\ell},\xi}(\bar s)$; its restriction identifies
$I_{\ell}/S_{\ell}$ with $\Gamma/K$.  All these constructions
are equivariant for line-preserving isometries.  Moreover,
\begin{equation}\label{eq:deletion-decomposition}
 J^0\otimes\Q=U_{\ell}\perp(K\otimes\Q),
 \qquad U_{\ell}=\Span_{\Q}\{x_{t_1},x_{t_2},x_{t_3}\}.
\end{equation}
\end{proposition}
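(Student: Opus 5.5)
The plan is to work entirely inside the trace lattice $\Lambda_J=(J,T)$, which is odd unimodular of rank $27$ with characteristic vector $\xi=-E$ of norm $3$. The Gram matrix $4I_3-J_3$ of $S_\ell$ has determinant $16$, since its eigenvalues are $1,4,4$. The coordinate description $S_\ell=\{a\in\Z^3:a_i\equiv a_j\pmod 2\}$ then gives $D(S_\ell)\cong(\Z/2)\times(\Z/4)$ or a group of order $16$.

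\textbf{Primitivity of $S_\ell$.} Suppose $\Lambda_J\cap(S_\ell\otimes\Q)$ is strictly larger than $S_\ell$. In the $f_i=e_i$ basis it is contained in $I_\ell$, because $I_\ell$ is the dual-type overlattice: it is $\Z^3$ with $e_i\cdot e_j=\delta_{ij}$, and it is the unique unimodular overlattice. It would then contain an element such as $e_1+e_2$ or some $e_i$ modulo $S_\ell$, and such elements lie in $J$ only if the idempotents $e_i$ themselves are integral. One also has $e_i\pm e_j\in J$ if and only if $2e_i\in J$ and $e_i\pm e_j\in J$. I will check that each of the three nonzero classes of $I_\ell/S_\ell$ forces some $e_i$, or the sum of two of them, to be integral. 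If $e_1+e_2\in J$, then $e_3=E-(e_1+e_2)\in J$, which is a trace-one rank-one element and contradicts $a(J)=0$ from \cref{prop:first-saturation}. The same argument handles each case.

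\textbf{Properties of $K$.} Primitivity together with unimodularity of $\Lambda_J$ gives $D(K)\cong D(S_\ell)$, so $\det K=16$ and $\rk K=24$. Since $\xi\in S_\ell$, we have $K\subset\xi^\perp=J^0$. Hence $K$ is even, and its minimum is at least $4$ by \cref{lem:residual-frame-existence}. The decomposition \eqref{eq:deletion-decomposition} follows at once, because $U_\ell\otimes\Q=S_\ell\otimes\Q\cap\xi^\perp$.

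\textbf{The gluing map $\phi$.} Unimodularity gives the standard gluing anti-isometry between the discriminant forms. The shifted form $q_{S_\ell,\xi}$ appears because $\Lambda_J$ is odd: in a characteristic-shifted sense the norms satisfy $y^2\equiv (y,\xi)\pmod 2$, so this is the correct even-type form to match against the even lattice $K$. I will verify this directly by writing $y=s+k$ with $s\in S_\ell^\vee$ and $k\in K^\vee$.

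\textbf{The completion $\Gamma$.} Take $\Gamma=\phi(I_\ell/S_\ell)+K$. Checking that $q_{S_\ell,\xi}$ vanishes on $I_\ell/S_\ell$ shows that this is an isotropic subgroup of order $4$, so $\Gamma$ is even unimodular of rank $24$. The roots of $\Gamma$ lie in the three nontrivial cosets of $K$, since $K$ has minimum at least $4$. I expect that, for each nonzero $w_i$, the norm-$2$ vectors in the corresponding coset form eight orthogonal pairs. This should follow from the shell data of \cref{thm:frame-theorem}, namely the $18$ neighbours at inner product $-4/3$ and the other multiplicities, applied to the vectors of $X$ lying over $\ell$. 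This gives $16$ roots per coset, so $48$ roots in total and root system $A_1^{24}$. The Niemeier classification then identifies $\Gamma\cong N(A_1^{24})$.

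\textbf{Main obstacle.} The hardest step is the root count in each coset, together with showing that each coset's root support is a Golay octad. I would try to count by lifting each norm-$2$ vector of $\Gamma$ in the $w_i$-coset to an element of $\Lambda_J$ of norm $3$ in $e_i+K^\vee$-type glue, and then match these with rank-one elements adjacent to $t_i$ through the bijection $t\mapsto x_t$. Once the roots are in place, the octad statement follows from the structure of $N(A_1^{24})$: the root sets of the cosets of $K$ are supports of Golay codewords of weight $8$, and they are disjoint and cover all $24$ coordinates, so they are three octads. Equivariance for line-preserving isometries holds because every construction above is canonical in $(\Lambda_J,\xi,\ell)$.
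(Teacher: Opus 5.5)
\textbf{The gap.} Your steps up to the construction of $\Gamma$ are essentially sound (small corrections below). However, the proposal never proves that $\Gamma$ has exactly $48$ roots; you only state it as an expectation. That count is the entire content of $\Gamma\cong N(A_1^{24})$ and of the trio $\mathcal T$, and both inequalities are needed. At least $48$ roots is compatible with every rooted Niemeier lattice, since $|R|=24h$ allows $A_2^{12}$, $D_4^6,\dots$. At most $48$ is compatible with the Leech lattice. Your lifting idea closes the gap, provided you run it in both directions.
\begin{itemize}
\item[(i)] Let $y\in\Gamma$ be a root with $\bar y=\phi(\bar e_i)$. Then $-e_i+y\in\Lambda_J$ (recall $2e_i\in S_\ell$), so $t'=E-e_i+y=e_j+e_k+y\in J$ has $\Tr(t')=2$ and $T(t',t')=4$. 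Hence $x_{t'}-x_{t_i}=t'-t_i\in J^0$ and $x_{t'}$ has norm $8/3$, so $x_{t'}\in X$. The bijection $\mathcal R_2(J)\to X$ then makes $t'$ positive of rank one, with $T(t',t_i)=0$ and $t'\notin\ell$.
\item[(ii)] Conversely, each of the $18-2=16$ neighbours $t'$ of $t_i$ off $\ell$ lies at distance $2$ from $t_j,t_k$. So \cref{thm:frame-theorem} gives $T(t',e_i)=0$ and $T(t',e_j)=T(t',e_k)=1$. Hence $y=t'-e_j-e_k$ is orthogonal to $S_\ell$, has norm $2$, and is the $K^\vee$-component of $t'-E=-e_i+y$, so $\bar y=\phi(\bar e_i)$.
\end{itemize}
These maps are mutually inverse. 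This gives exactly $16$ roots in each nonzero coset, hence $|R(\Gamma)|=48$ and $h=2$; no separate check that the roots form orthogonal pairs is needed.

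The octad claim also needs an argument, not an appeal to ``the structure of $N(A_1^{24})$''. For $0\neq\chi\in W^*$, use unimodularity to choose $t_\chi\in\Gamma$ with $(t_\chi,a)\equiv\chi\lambda(a)\pmod 2$. Writing $t_\chi=\frac12\sum a_jr_j$, the indices with $a_j$ odd form a Golay word, namely the union of the two label sets $O_w$ with $\chi(w)=1$. Since $K$ is rootless, the three label sets partition the $24$ coordinates. So each label set is the complement of a Golay word, and it has weight $8$ by the count above.

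\textbf{Smaller points and comparison.} $D(S_\ell)\cong(\Z/4\Z)^2$, not $(\Z/2)\times(\Z/4)$, which has order $8$; its $2$-torsion is exactly $I_\ell/S_\ell$. In the primitivity step, ``unique unimodular overlattice'' is not the relevant fact. The right reason that $\Lambda_J\cap(S_\ell\otimes\Q)\subset I_\ell$ is that a class $\frac12(m_1,m_2,m_3)$ with two odd entries has norm $\equiv\frac12\pmod 1$, so it cannot lie in an integral overlattice. Also, the nonzero classes of $I_\ell/S_\ell$ are just $\bar e_1,\bar e_2,\bar e_3$ (as $e_1+e_2\equiv e_3$), so your case analysis reduces to $e_i\in J$, which $a(J)=0$ excludes.

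With these repairs your route genuinely differs from the paper's. The paper does not prove the proposition internally. It applies the line-deletion theorem and gluing description of the companion paper, which hold for any even rank-$26$, determinant-$3$, minimum-$4$ lattice. It then uses only direction (ii) to identify $O_i$ with eight root pairs in $V_i$. Your argument replaces that citation with Albert-specific input: $a(J)=0$ for primitivity, and the bijection $\mathcal R_2(J)\leftrightarrow X$ for the upper bound. It is therefore more self-contained, but it does not give the general lattice statement.
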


Apply the line-deletion theorem and its discriminant-gluing description
\cite[Theorem~6.2 and Proposition~6.5]{Hoehn26} to the lattice
$L=J^0$.  Those statements start from an arbitrary even rank-$26$ lattice
of determinant $3$ and minimum $4$, so they apply uniformly to every
residual class by \cref{lem:residual-frame-existence}.  Their proofs use the
local shell and the discriminant gluing, not uniqueness of the lattice or a
known automorphism group.
The completion is defined by the image of $I_{\ell}/S_{\ell}$ under the discriminant
anti-isometry.  In particular, it does not require a choice of signs for the roots.

The root pairs can be read directly from the frame.  A neighbour $t'$ of $t_i$
off $\ell$ has distances $2$ from $t_j,t_k$, so \cref{thm:frame-theorem} gives
$T(t',t_i)=0$ and $T(t',e_j)=T(t',e_k)=1$.
The orthogonality argument in \cref{lem:positive-orthogonality}, applied to
$t'/2$ and $e_i$, gives Jordan orthogonality.  The Peirce decomposition then yields
\[
 t'=e_j+e_k+y,\qquad y\in V_i,\qquad T(y,y)=4-2=2.
\]
Moreover, $t'-E=-e_i+y\in \Lambda_J$.  The gluing in \eqref{eq:phi} therefore gives
$y\in\Gamma$ and
$\bar y=\phi(-\bar e_i)=\phi(\bar e_i)\in\Gamma/K$, since
$\bar e_i\in I_{\ell}/S_{\ell}$ has order $2$.  Hence $\lambda(y)=w_i$.
The third point $t''=2E-t_i-t'=e_j+e_k-y$ gives the opposite root.
For fixed $i$, the map $t'\mapsto y$ is injective, so the $16$ off-frame neighbours
give eight root pairs in $V_i$.  These are precisely the octad $O_i$ of
\cref{prop:deletion-package}.

We call
\begin{equation}\label{eq:binary-shadow}
 \mathcal D_{\mathcal F}=(\Gamma,\mathcal T,\lambda,\epsilon),
 \qquad \epsilon=\phi,
\end{equation}
the \emph{oriented deletion datum}.  An automorphism may permute the octads and the
three nonzero elements of $W$ simultaneously; it must intertwine $\lambda$ and the
full gluing, using the induced permutation of the $u_i$.  The frame marking is
therefore already encoded by the root labels.

There are two kinds of gluing associated with the frame.  The complete Peirce
data retain the cubic order; the quadratic deletion datum
\eqref{eq:binary-shadow} retains the marked trace lattice and suffices for a
stabilizer bound.  No identification of the latter with a quotient of the Peirce
code is needed.

A Leech neighbour of $\Gamma$ can be constructed after choosing root signs.  Such a
polarization is not intrinsic to the frame and is not included in
$\mathcal D_{\mathcal F}$.  All independent root sign changes must be allowed when
computing its automorphisms.

\begin{lemma}[faithfulness]\label{lem:shadow-faithful}
Restriction gives an injection
\begin{equation}\label{eq:stabilizer-injection}
 \Aut(J)_{\mathcal F}\hookrightarrow\Aut(\mathcal D_{\mathcal F}).
\end{equation}
\end{lemma}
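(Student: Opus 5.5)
Two things have to be checked: that restriction to the deletion datum is well defined, and that it is injective.

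\emph{Well-definedness.} An automorphism $g\in\Aut(J)_{\mathcal F}$ fixes $E$ and preserves the trace pairing $T$, so it is an isometry of $\Lambda_J=(J,T)$ fixing $\xi=-E$. Because it permutes $\{t_1,t_2,t_3\}$, it permutes the $u_i=t_i-E$. Hence it preserves $S_\ell$, $I_\ell$, the complement $K=S_\ell^\perp\cap\Lambda_J$, and the gluing $\phi$. By the equivariance clause of \cref{prop:deletion-package}, it then induces an automorphism of the completion $\Gamma$. This automorphism permutes the octads $\mathcal T$ and the labels $\lambda$ compatibly with the permutation of the $u_i$, so we get a homomorphism $\Aut(J)_{\mathcal F}\to\Aut(\mathcal D_{\mathcal F})$.

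\emph{Injectivity.} Suppose $g$ acts trivially on $\mathcal D_{\mathcal F}$. In particular $g$ is the identity on $\Gamma\supset K$, hence on $K\otimes\Q$.

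Triviality on $\mathcal D_{\mathcal F}$ also forces trivial action on the labels $w_i$, and hence on $I_\ell/S_\ell\cong\Gamma/K$ via $\phi$. The nonzero classes $\bar e_i$ are distinct, so $g$ induces the trivial permutation of the frame. Thus $g$ fixes each $t_i$, hence each $x_{t_i}$, and so is the identity on $U_\ell$.

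By \eqref{eq:deletion-decomposition}, $J^0\otimes\Q=U_\ell\perp(K\otimes\Q)$. Together with $gE=E$, this shows $g$ is the identity on $J\otimes\Q$, and therefore on $J$.

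The main point needing care is that the frame permutation really is recovered from the datum. This is where the remark that ``the frame marking is encoded by the root labels'' is used: the octad $O_i$ is exactly the set of root pairs in $V_i$ with label $w_i$, as shown before \eqref{eq:binary-shadow}. So fixing $\mathcal T$ pointwise fixes each $t_i$.

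For the same reason, sign changes of roots cause no difficulty. The kernel of the map is tested on the full lattice $\Gamma$, which carries no sign choice, rather than on a polarization.
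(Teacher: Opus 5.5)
Your proposal is correct and follows essentially the same route as the paper. Triviality on $\Gamma$ gives triviality on $K\otimes\Q$ and on $W=\Gamma/K$, hence on the frame elements and on $U_{\ell}$; the orthogonal decomposition of $J^0\otimes\Q$ together with $gE=E$ then finishes, and your well-definedness paragraph just unpacks the paper's one-line remark that an Albert automorphism fixes $E$, preserves the trace lattice, and transports the whole deletion construction.
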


\begin{proof}
An Albert automorphism fixes $E$, preserves the trace lattice, and transports the
entire deletion construction.  An element in the kernel of the restriction map,
hence acting trivially on $\Gamma$, acts trivially on $W$, since $\lambda$ is
surjective.  It consequently fixes each frame element,
and hence fixes $U_{\ell}$ pointwise.  It also fixes $K\otimes\Q$ pointwise.
Equation~\eqref{eq:deletion-decomposition} shows that it is the identity on
$J^0\otimes\Q$.  As it fixes $E$, it is the identity on $J$.
\end{proof}

\subsection{The finite Golay calculation}

The Golay description gives
\[
 \Gamma=\left\{\frac12\sum_{j=1}^{24}a_jr_j:
 a_j\in\Z,\ (a_j\bmod2)\in\mathcal G_{24}\right\},\qquad r_j^2=2,
\]
and hence
\[
 \Aut(\Gamma)=\Sigma\rtimes M_{24},\qquad
 \Sigma\cong C_2^{24},\qquad \Sigma\triangleleft\Aut(\Gamma).
\]
Here $\Sigma$ is the group of independent root sign changes.  Indeed, its roots span the space, every isometry permutes their pairs, and the induced
coordinate permutation must preserve the Golay glue.  Each of the $24$ independent
root reflections preserves that glue.

The Mathieu group is transitive on trios.  There are $759$ octads and $15$ trios
through an octad, so there are $759\cdot15/3=3795$ trios.  The classical Golay-code
order $|M_{24}|=244823040$ therefore gives
\begin{equation}\label{eq:trio-stabilizer}
 |(M_{24})_{\mathcal T}|=64512=2^{10}3^2 7;
\end{equation}
see \cite[Chapter~11]{ConwaySloane}.

For clarity we also state the finite input which determines the correct subgroup
of this trio stabilizer.  For a quotient map with the indicated root labels, use
unimodularity of $\Gamma$ to represent each functional $\chi\lambda$ by a class
$t_\chi\in\Gamma/2\Gamma$:
\[
 (t_\chi,a)\equiv\chi(\lambda(a))\pmod2\qquad(a\in\Gamma).
\]
The admissibility condition is
\begin{equation}\label{eq:deletion-admissibility}
 t_\chi^2\equiv2\pmod4\qquad(0\ne\chi\in W^*).
\end{equation}
The maps from line deletion satisfy this condition.  Their compatible gluings are
exactly the anti-isometries in \eqref{eq:phi} extending the specified identification
on $I_{\ell}/S_{\ell}$.

\begin{lemma}[finite deletion-data action]\label{lem:finite-deletion-orbits}
Fix $(\Gamma,\mathcal T)$ and the root labels $w_i\in W\setminus\{0\}$.
For $g\in\Aut(\Gamma)_{\mathcal T}$ inducing $\pi_g\in S_3$ on the octads, let
$h_g\in\Aut(W)$ and $s_g\in\Aut(S_{\ell})$ be given by
$h_g(w_i)=w_{\pi_g(i)}$ and $s_g(u_i)=u_{\pi_g(i)}$.  The action on the
oriented data is
\begin{equation}\label{eq:twisted-deletion-action}
 g\cdot\lambda=h_g\circ\lambda\circ g^{-1},\qquad
 g\cdot\phi=\bar g\circ\phi\circ\bar s_g^{-1},
\end{equation}
where $\bar s_g$ acts on $D(S_{\ell})$ and
$\bar g:D(K)\to D(gK)$ is induced by $g$.  Thus
$\ker(g\cdot\lambda)=gK$, and the root labels remain fixed.

There are $2^{21}$ admissible quotient maps, each admitting two compatible
gluings.  On the resulting set $\mathscr D$ of $2^{22}$ oriented data, $\Sigma$
has eight orbits, each of size $2^{19}$.  The induced action of
$T_{\mathcal T}:=(M_{24})_{\mathcal T}$ is transitive on these eight sign orbits.
\end{lemma}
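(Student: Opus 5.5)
The plan is to coordinatize everything by the Golay description of $\Gamma$ and reduce each assertion to linear algebra over $\F_2$. The twisted action \eqref{eq:twisted-deletion-action} is forced by transport of structure. A $g\in\Aut(\Gamma)_{\mathcal T}$ moves the octad $O_i$ to $O_{\pi_g(i)}$. For the label of a root in $O_{\pi_g(i)}$ to remain $w_{\pi_g(i)}$, one must post-compose with $h_g$. The same permutation acts on the $u_i$, which accounts for $\bar s_g$ in the formula for $\phi$. The identity $\ker(g\cdot\lambda)=gK$ is then immediate, and I would record it first.

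\emph{Counting quotient maps.} Since $W$ has exponent $2$, every $\lambda$ factors through $\Gamma/2\Gamma\cong\F_2^{24}$. Let $Q=\bigoplus_j\Z r_j$ be the root lattice. Then $2\Gamma\subset Q$, and $Q/2\Gamma$ is $12$-dimensional. On $Q$ the map $\lambda$ is prescribed by the labels $r_j\mapsto w_{i(j)}$. This prescription kills $2\Gamma$ because every Golay codeword meets each octad of the trio evenly, so it is well defined. Extensions to $\Gamma$ form a torsor under $\operatorname{Hom}(\Gamma/Q,W)\cong W^{12}$, giving $2^{24}$ candidate maps; all of them are surjective, since the labels already are. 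For the admissibility condition \eqref{eq:deletion-admissibility}, I would write $t_\chi$ explicitly. The class $t_\chi$ is determined by its pairings with $Q$, which are fixed by the labels, up to adding an element of $Q^\vee\cap\Gamma$ modulo $2\Gamma$, which is linear in the extension data. The condition $t_\chi^2\equiv 2\pmod 4$ then becomes an affine condition on the extension parameters. The three nonzero $\chi$ should give three independent conditions, cutting $2^{24}$ down to $2^{21}$; independence has to be checked, and I expect it to come from the fact that the three octads are disjoint. Each admissible $\lambda$ has kernel $K$ with $D(K)$ of order $16$, and the identification $I_\ell/S_\ell\to\Gamma/K$ is fixed. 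The anti-isometries \eqref{eq:phi} extending it correspond to the two lifts of the complementary quotient, compatible with $q_{S_\ell,\xi}$, giving $2^{22}$ oriented data in total.

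\emph{Sign orbits.} Changing the sign of $r_j$ fixes $\lambda$ on $Q$. It acts on half-vectors by adding $r_j$ times the $j$-th coordinate, so it shifts the extension parameter by the functional ``$j$-th Golay coordinate'' tensored with $w_{i(j)}$, and it may also swap the two gluings. I would compute the image of $\Sigma\cong\F_2^{24}$ in the affine group of $\mathscr D$ and show that its kernel has order $2^5$. Since $\Sigma$ is abelian, this gives orbits of size $2^{19}$, hence $2^{22}/2^{19}=8$ orbits. The quotient $\mathscr D/\Sigma$ is then an affine space over $\F_2^3$: the cokernel of the $\Sigma$-action on the parameter space, together with the gluing bit.

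\emph{Transitivity.} The step I expect to be hardest is showing that $T_{\mathcal T}$, of order $64512=2^6\cdot(3!\cdot168)/\ldots$ with structure $2^6{:}(S_3\times L_3(2))$, is transitive on these eight orbits. My first attempt is to identify the $8$-element set $\mathscr D/\Sigma$ with an $\F_2$-affine space on which the subgroup $2^6{:}L_3(2)$ acts through $\operatorname{AGL}_3(2)$, or at least contains translations. In the hexacode/MOG description, the octad stabilizer $2^4{:}A_8$ restricts to the trio stabilizer. Its $2^6$ normal subgroup should act on the cokernel by nonzero translations, and that already gives transitivity. If this identification is awkward, the fallback is an orbit computation. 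I would exhibit one explicit element of $T_{\mathcal T}$ (for instance an involution in $2^6$) that moves a chosen datum out of its $\Sigma$-orbit, and then use the $L_3(2)$ action to show that the stabilizer of one sign orbit has index $8$.
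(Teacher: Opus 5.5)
Your counting steps follow essentially the paper's route and are sound. The label-compatible maps form a torsor of size $2^{24}$ under $\operatorname{Hom}(\Gamma/Q,W)$. The ambiguity in $t_\chi$ is $Q/2\Gamma$, the mod-$2$ annihilator of $Q$; you wrote $Q^\vee\cap\Gamma$, which is all of $\Gamma$. On $Q/2\Gamma$ the map $t\mapsto t^2/2 \bmod 2$ is linear, so the three admissibility conditions are affine, and they are indeed independent. This part still needs three things:
\begin{itemize}
\item the explicit six-dimensional kernel $(l+c_1,l+c_2,l+c_3)$ of $\Sigma$ on quotient maps;
\item the fact that $-1$ lies in this kernel and exchanges $\phi$ and $-\phi$, which is what gives each oriented datum a $\Sigma$-stabilizer of order $2^5$;
\item for ``two gluings'', existence of one compatible anti-isometry (this is where admissibility is used), together with the kernel $\{\pm1\}$ of $\operatorname{Isom}(D(S_\ell),q_{S_\ell,\xi})$ on $I_\ell/S_\ell$.
\end{itemize}

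The genuine gap is transitivity, and the mechanism you propose cannot work. The normal $2^6$ of $T_{\mathcal T}\cong 2^6{:}(L_3(2)\times S_3)$ is elementary abelian and irreducible under $L_3(2)\times S_3$. It is $V\otimes U$, with $V$ natural for $L_3(2)$ and $U$ natural for $S_3\cong L_2(2)$; its three pointwise octad stabilizers are the subspaces $V\otimes u$. The kernel of its action on the eight sign orbits is normalized by $T_{\mathcal T}$, so it is trivial or all of $2^6$. Since $S_8$ has no elementary abelian subgroup of order $2^6$, the $2^6$ fixes every sign orbit, exactly as recorded in \eqref{eq:eight-orbit-structure}. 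So it supplies no translations, and the involution from $2^6$ in your fallback moves nothing.

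Your affine picture is correct: $T_{\mathcal T}$ does act on $\Omega\cong\F_2^3$ by affine maps. But all transitivity must come from the $L_3(2)$ factor, whose image is either trivial or a complement to the translations in $\mathrm{AGL}_3(2)$. There are two classes of such complements: point stabilizers, with orbits of sizes $1$ and $7$, and the transitive one, acting as $L_2(7)$ on $\mathbf P^1(\F_7)$. Deciding between them means showing that the translation cocycle of the induced affine action is the nonzero class in $H^1(L_3(2),\F_2^3)\cong\F_2$. That requires an actual computation of explicit Golay permutations on the three-dimensional cokernel; the paper takes it from the two permutations computed in the companion paper. Showing that one element moves one sign orbit is not enough, since it does not exclude the $1+7$ orbit structure.
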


These are the finite Golay calculations of
\cite[Lemma~6.4 and Propositions~6.5 and~6.8]{Hoehn26}.
It is a statement about fixed rank-$24$ data, not about orbits of Albert frames.

The two gluing choices can be seen directly from $S_{\ell}$.  In the coordinates
$f_i$, its dual is
\[
 S_{\ell}^{\vee}
 =\left\{\tfrac12(m_1,m_2,m_3):m_i\in\Z,\
                    m_1+m_2+m_3\equiv0\pmod2\right\}.
\]
Consequently $D(S_{\ell})\cong(\Z/4\Z)^2$ and
$2D(S_{\ell})=I_{\ell}/S_{\ell}$, on which $q_{S_{\ell},\xi}$ vanishes.
For cyclic $(i,j,k)$, put $\delta_i=\overline{(f_j-f_k)/2}$.
Then $2\delta_i=\bar f_i$, $\sum_i\delta_i=0$, and
$q_{S_{\ell},\xi}(\delta_i)=1/2$.  The only classes doubling to $\bar f_i$ with
this value of $q_{S_{\ell},\xi}$ are $\pm\delta_i$; the sum-zero relation forces
a common sign.  Thus the kernel of the isometry action on $I_{\ell}/S_{\ell}$
is $\{\pm1\}$.  Coordinate permutations give the full $S_3$ on its three
nonzero classes, so
\[
 \operatorname{Isom}(D(S_{\ell}),q_{S_{\ell},\xi})
 \cong\langle-1\rangle\times S_3.
\]
The cited extension result supplies one compatible gluing.  Any other differs
by an isometry in this kernel, so the two choices are $\phi$ and $-\phi$.

Briefly, in a trio marking the six-dimensional kernel of root sign changes
on quotient maps consists of the words
\[
 (l+c_1,l+c_2,l+c_3),\qquad
 l\in\langle x,y,z\rangle,\quad c_i\in\F_2.
\]
Thus their orbits on quotient maps have size $2^{18}$.  The three admissibility
equations leave eight such orbits.  Two explicit Golay permutations join these
orbits; their action is computed in the cited result.  Finally, $-1$ fixes a quotient
map and interchanges its two compatible gluings, doubling each sign orbit on the
oriented data.  We use only these finite conclusions, not the global reconstruction
and uniqueness consequences drawn from them in the companion paper.

The eight-point action has a useful geometric description.  Let $\Omega$ be the
eight sign orbits, with the induced action from \eqref{eq:twisted-deletion-action}.
The same Golay calculation, with its refinement in
\cite[Proposition~6.12]{Hoehn26}, gives
\begin{equation}\label{eq:eight-orbit-structure}
 T_{\mathcal T}\cong 2^6:\bigl(L_3(2)\times S_3\bigr).
\end{equation}
The normal $2^6$ and the block-permuting $S_3$ act trivially on $\Omega$.
The quotient $L_3(2)\cong L_2(7)$ acts as on $\mathbf P^1(\F_7)$, so a point
stabilizer is $7:3$.  Consequently
\begin{equation}\label{eq:eight-orbit-stabilizer}
 (T_{\mathcal T})_\omega\cong
 2^6:\bigl((7:3)\times S_3\bigr),\qquad \omega\in\Omega.
\end{equation}
This description is derived from the Golay generators, not from recognition of
a parabolic subgroup in ${}^3D_4(2)$.

\begin{proposition}[the fixed-datum stabilizer]\label{prop:frame-stabilizer-bound}
For every primitive scale-$2$ frame $\mathcal F$ in $J$,
\[
 |\Aut(\mathcal D_{\mathcal F})|=258048,
 \qquad |\Aut(J)_{\mathcal F}|\le258048.
\]
The kernel of the root-pair permutation action on $\Aut(\mathcal D_{\mathcal F})$
is $C_2^5$.  Its image is the index-eight subgroup
$2^6:\bigl((7:3)\times S_3\bigr)$ of $(M_{24})_{\mathcal T}$.
\end{proposition}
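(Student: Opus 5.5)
The plan is to realize $\Aut(\mathcal D_{\mathcal F})$ as the stabilizer of one oriented datum under the action of $\Aut(\Gamma)_{\mathcal T}=\Sigma\rtimes T_{\mathcal T}$ on the set $\mathscr D$ of \cref{lem:finite-deletion-orbits}, and to compute its order by orbit--stabilizer. An automorphism of $\mathcal D_{\mathcal F}$ is an isometry $g$ of $\Gamma$ that preserves the trio $\mathcal T$ and fixes the pair $(\lambda,\phi)$ under the twisted action \eqref{eq:twisted-deletion-action}. So $\Aut(\mathcal D_{\mathcal F})$ is exactly the stabilizer in $\Aut(\Gamma)_{\mathcal T}$ of the point $(\lambda,\phi)\in\mathscr D$. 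The line-deletion maps are admissible, so this point does lie in $\mathscr D$.

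The order of the ambient group is
\[
 |\Aut(\Gamma)_{\mathcal T}|=2^{24}\cdot64512
\]
by \eqref{eq:trio-stabilizer}. By \cref{lem:finite-deletion-orbits}, $\Sigma$ has eight orbits of size $2^{19}$ on $\mathscr D$ and $T_{\mathcal T}$ permutes them transitively. Hence $\Aut(\Gamma)_{\mathcal T}$ is transitive on all $2^{22}$ oriented data, and
\[
 |\Aut(\mathcal D_{\mathcal F})|=\frac{2^{24}\cdot 64512}{2^{22}}=4\cdot64512=258048.
\]
The bound $|\Aut(J)_{\mathcal F}|\le258048$ then follows at once from the injection \eqref{eq:stabilizer-injection} of \cref{lem:shadow-faithful}.

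For the structural statements, I would project the stabilizer to $T_{\mathcal T}$; this is the root-pair permutation action. Its kernel is $\Sigma\cap\Aut(\mathcal D_{\mathcal F})$, the stabilizer in $\Sigma$ of the datum. Since the $\Sigma$-orbit has size $2^{19}$, this kernel has order $2^{24}/2^{19}=2^5$, and it is elementary abelian because $\Sigma$ is. The image consists of those $g\in T_{\mathcal T}$ for which some sign change $\sigma$ makes $\sigma g$ fix the datum. Equivalently, $g$ fixes the $\Sigma$-orbit $\omega$ of the datum in $\Omega$. So the image is $(T_{\mathcal T})_\omega$. By the transitivity on the eight sign orbits it has index $8$, and by \eqref{eq:eight-orbit-stabilizer} it is $2^6:\bigl((7:3)\times S_3\bigr)$. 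As a consistency check, $2^5\cdot 64512/8=258048$.

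The main obstacle is the first identification: one must check that automorphisms of the datum are exactly elements of $\Aut(\Gamma)_{\mathcal T}$ fixing $(\lambda,\phi)$ under the twisted action, and not some larger set. In particular, the induced permutation of the $u_i$ and of $W$ has to be forced by $\pi_g$, so that no extra symmetries (for example $-1$ on $D(S_\ell)$ acting alone) appear. I would settle this from the definition after \eqref{eq:binary-shadow}: $h_g$ and $s_g$ are determined by the octad permutation, since the root labels $w_i$ are fixed. The remaining counting is then purely the cited finite Golay input.
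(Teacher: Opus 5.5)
Your proposal is correct and follows the paper's proof essentially verbatim. The order comes from orbit--stabilizer for $\Sigma\rtimes T_{\mathcal T}$, which acts transitively on the $2^{22}$ oriented data. The kernel is $\Sigma_{\mathcal D}$, of order $2^{24-19}=2^5$. The image is the index-eight stabilizer of the sign orbit, since a permutation preserving that orbit can be corrected by a sign change. The Albert bound then follows from the injection of \cref{lem:shadow-faithful}.
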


\begin{proof}
Put $\mathcal D=\mathcal D_{\mathcal F}$.  By
\cref{lem:finite-deletion-orbits}, $\Sigma\rtimes T_{\mathcal T}$ is transitive
on the $2^{22}$ oriented data.  Orbit--stabilizer immediately gives
\[
 |\Aut(\mathcal D)|
 =\frac{2^{24}\cdot64512}{2^{22}}=258048.
\]
For the kernel and image, let $Q_{\mathcal D}$ be the stabilizer of the sign
orbit $\Sigma\mathcal D$ in $T_{\mathcal T}$.  The same lemma gives
\[
 |\Sigma_{\mathcal D}|=2^{24-19}=2^5,
 \qquad |Q_{\mathcal D}|=64512/8=8064.
\]
Normality of $\Sigma$ makes the action on its orbits well-defined.  The image of
$\Aut(\mathcal D)$ in the permutation group is exactly $Q_{\mathcal D}$: a permutation preserving $\Sigma\mathcal D$ can be corrected
by a sign change to fix $\mathcal D$.  Its structure is
\eqref{eq:eight-orbit-stabilizer}.  Therefore
\begin{equation}\label{eq:correct-deletion-sequence}
 1\longrightarrow\Sigma_{\mathcal D}
 \longrightarrow\Aut(\mathcal D)
 \longrightarrow Q_{\mathcal D}\longrightarrow1
\end{equation}
is exact and gives the stated kernel and image, with factorization
$258048=2^5\cdot8064$.  Apply \cref{lem:shadow-faithful} for the Albert stabilizer bound.  Notice that the reduction from $2^6$ to $2^5$ stabilizes one
of the two gluings; it is not a quotient by global sign inside $\Aut(\mathcal D)$.
\end{proof}

\section{Coefficient saturation and the two integral classes}\label{sec:saturation}

\begin{proof}[Proof of \cref{thm:main}]
The first coefficient has already determined the standard order $g_I=A$
and the positive residual mass $M_{\mathrm{res}}$ in
\cref{prop:first-saturation}.
Every residual class has a primitive scale-$2$ frame by
\cref{lem:residual-frame-existence}.  Reinterpret
\cref{cor:residual-frame-mass} by the framed mass identity of
\cref{thm:mass-assisted-conway}:
\[
 \sum_{\substack{[(J,\mathcal F)]\\{}[J]\in\mathscr X_{\mathrm{res}}}}
       \frac1{|\Aut(J,\mathcal F)|}=\frac1{258048}.
\]
The uniform bound in \cref{prop:frame-stabilizer-bound} makes every summand
at least $1/258048$.  Hence there is exactly one framed isomorphism class,
with stabilizer of order $258048$.  Since every residual class has a frame,
there is exactly one residual Albert class.  Denote it by $J_E$.
Its contribution to \eqref{eq:residual-mass} now gives its group order:
\[
 B:=|\Aut(J_E)|=M_{\mathrm{res}}^{-1}
 =\frac{D_0}{600}=2^{12}3^5 7^2 13=634\,023\,936.
\]
Consequently the two determined orders account for the total mass:
\begin{equation}\label{eq:mass-split}
 M=\frac1A+\frac1B,\qquad D_0=91A=600B.
\end{equation}
The framed uniqueness gives frame transitivity; orbit--stabilizer yields
\begin{equation}\label{eq:2457}
 F_2(J_E)=\frac{B}{258048}=2457.
\end{equation}

The explicit identification with the Elkies--Gross isotope is given in
\cref{subsec:exotic-model} below and is not used in this deduction.
Complete Peirce--triality data reconstruct a framed cubic order by
\cref{thm:reconstruction}.  An admissible primitive scale-$2$ datum giving
a definite order without a trace-$1$ rank-one element therefore reconstructs
$J_E$.  Frame transitivity proves the remaining assertions about these data.
\end{proof}

For their fixed exceptional model, Elkies--Gross already obtain transitivity
and $14742$ embeddings of $\Z+2\Z^3$ from the known root-triple action
\cite[Section~7]{ElkiesGrossCubic}; dividing by $6$ gives $2457$ unordered
frames.  The argument above instead obtains these conclusions by saturating
the framed mass over the initially unspecified residual genus.

\subsection{The explicit second order}\label{subsec:exotic-model}

The class just obtained has the familiar Elkies--Gross representative.
In their octonion basis put
\[
 \beta=\frac{-1+\varepsilon_1+\cdots+\varepsilon_7}{2}\in\cO,
 \qquad \beta^2+\beta+2=0,
 \qquad
 v=\begin{pmatrix}
 2&\beta&\bar\beta\\ \bar\beta&2&\beta\\ \beta&\bar\beta&2
 \end{pmatrix}.
\]
Here $N(v)=8-12+\Tr_{\cO}(\beta^3)=1$, since $\beta^3=2-\beta$.
The element is positive, and the isotope $J_I^{(v)}$ is a definite integral
Albert algebra with identity $v^{-1}=v^{\#}$.  Elkies--Gross establish that
its trace-zero lattice is rootless \cite{ElkiesGross}.  It is therefore
nonstandard and, by the classification above,
\[
 J_E\cong J_I^{(v)}.
\]
This explicit rootlessness result is used only to name a representative of
the residual class; rootlessness of every residual class was already forced
by the scalar coefficient.

\begin{proposition}[the two trace-zero lattices]\label{prop:trace-zero-lattices}
The standard and nonstandard trace-zero lattices are
\[
 J_I^0\cong A_2\perp E_8^3,\qquad J_E^0=L_{26},
\]
where $L_{26}$ is even of rank $26$, determinant $3$, and minimum $4$.
\end{proposition}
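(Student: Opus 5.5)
The standard case needs no new input.  By \eqref{eq:standard-trace-lattice}, $J_I^0\cong A_2\perp E_8^3$: the diagonal trace-zero part with integral entries summing to zero is the $A_2$ root lattice (norms $\sum a_i^2$), and each off-diagonal position contributes $\cO$ with $T(x,x)=2n(c)$, an $E_8$.  The decomposition is orthogonal, so it gives the first isomorphism.  One should still check that the trace-zero sublattice of $\Her_3(\cO)$ is exactly this direct sum with no extra glue.  This holds because the diagonal entries are integral and the off-diagonal entries range independently over $\cO$.

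For the nonstandard class, $J_E$ is the unique residual class from the proof of \cref{thm:main}, so the properties of $J^0$ for a residual class apply to $J_E^0$.  By the discussion after \eqref{eq:standard-a}, local equivalence with $J_I$ makes $(J,T)$ unimodular with $E$ characteristic.  Then $J^0=E^\perp$ is even of rank $26$.  Its determinant is $T(E,E)=3$, since $E$ is primitive in a unimodular lattice.  \Cref{lem:residual-frame-existence} then gives minimum exactly $4$.

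I plan to take $L_{26}$ as the name for this lattice, so the second assertion simply records these invariants.  If the paper instead intends $L_{26}$ to be a previously characterized lattice, such as Borcherds's rootless determinant-$3$ lattice, I would identify it via the uniqueness of the rootless even rank-$26$ determinant-$3$ lattice in \cite{BorcherdsThesis} or \cite{Hoehn26}.  I regard that identification as the only genuinely nontrivial step.  Otherwise the proof is a direct assembly of \cref{prop:first-saturation}, \cref{lem:residual-frame-existence} and the classification.
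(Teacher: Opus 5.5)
Your proof is correct and follows the paper's: the standard case is the coordinate decomposition \eqref{eq:standard-trace-lattice}, and for $J_E$ the invariants come from local equivalence (unimodular, $E$ characteristic), \cref{prop:first-saturation} and \cref{lem:residual-frame-existence}. The one divergence is your fallback of identifying $L_{26}$ with a named lattice through a lattice-uniqueness theorem from \cite{BorcherdsThesis} or \cite{Hoehn26}. The paper deliberately uses no such theorem: $J_E\cong J_I^{(v)}$ (\cref{subsec:exotic-model}, from the Albert classification plus Elkies--Gross rootlessness) already makes $J_E^0$ the Elkies--Gross trace-zero lattice, so that step is not the hard one you anticipated.
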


\begin{proof}
The standard description is \eqref{eq:standard-trace-lattice}.  The other
assertions are \cref{prop:first-saturation,lem:residual-frame-existence}.
The isotope above identifies the latter with the Elkies--Gross lattice.
No lattice uniqueness theorem enters this proof.
\end{proof}

\subsection{The two theta series and embedding masses}

Now that the residual genus consists of one class, the full-genus identities
specialize to
\begin{equation}\label{eq:two-theta}
 \vartheta_{J_I}=E_{12}+\frac{432000}{691}\Delta,
 \qquad \vartheta_{J_E}=E_{12}-\frac{65520}{691}\Delta,
\end{equation}
\begin{equation}\label{eq:scalar-mass-average}
 \frac{91\vartheta_{J_I}+600\vartheta_{J_E}}{691}=E_{12},
\end{equation}
and
\begin{equation}\label{eq:G2-mass-average}
 91\Theta_I+600\Theta_E=546\mathscr E.
\end{equation}
Before uniqueness the expression
\[
 \frac{546\mathscr E-91\Theta_I}{600}
\]
was the normalized residual average; it is now the theta series of one
integral Albert class.  In particular, the three coefficients used in the
proof are
\[
\begin{array}{c|r|r|r}
 C&c_C(\Theta_I)&c_C(\Theta_E)&c_C(\mathscr E)\\\hline
 \Z^3&6&0&1\\
 \Z[t]/(t^3-t)&726&0&121\\
 \Z+2\Z^3&2262&14742&16577
\end{array}
\]
The last entry $14742=6\cdot2457$ counts ordered frames because the exotic
class has no content-$2$ lifts.  The standard coefficient is weighted and
must not be divided by $6$ to count its frames.

For a maximal totally real \'etale cubic order $C$, the Eisenstein
coefficient formula in weight $4$ gives
\[
 c_C(\mathscr E)=120^3\zeta_C(-3),
\]
since $\zeta_{\Q}(-3)=1/120$ and the split coefficient is $1$
\cite[Section~9 and the remark following Proposition~10.1]{GanGrossSavin}.
The latter remark establishes the coefficient formula unconditionally in
weight $4$.  Such an order is Gorenstein; hence
\eqref{eq:G2-embedding-coefficients} and \eqref{eq:G2-mass-average} give
\begin{equation}\label{eq:cubic-embedding-mass}
 \frac{N(C,J_I)}A+\frac{N(C,J_E)}B=\frac5{2016}\zeta_C(-3).
\end{equation}
This is an embedding mass, a sum of reciprocal stabilizers over embedding
orbits.  It need not consist of a single orbit.  A decomposable order
$\Z\times\cO_D$ can embed only in $J_I$, because the image of its rank-one
idempotent would give a trace-$1$ idempotent in $J_E$.

Finally, put $\Psi=(\Theta_I-\Theta_E)/6$.  Then
\[
 \mathscr E-\Psi=\frac{691}{546}\Theta_E.
\]
The integral lift coefficients in \eqref{eq:G2-lift-coefficient} yield the
coefficientwise congruence $\mathscr E\equiv\Psi\pmod{691}$ in
$\Z_{(691)}$ for the cubic-ring coefficients.  The difference direction has
a nonzero scalar boundary proportional to Ramanujan's $\Delta$
\cite[Corollary~2.5.1]{PollackMinimal}; it is not a cuspidal $G_2$ form.
There is also a Hecke check on this $\Delta$-direction.  The preceding
coefficient table gives
\[
 c_{C_0}(\Psi)=1,\qquad c_{R_4}(\Psi)=121,\qquad
 c_{C_2}(\Psi)=-2080.
\]
Applying the depth-zero formula of \cref{prop:G2-Eisenstein-coefficient}
to $\Psi$ gives
\[
 c_{C_0}\bigl(T_7(2)\Psi\bigr)
 =1+\frac3{16}\,121-\frac{2080}{128}=\frac{119}{16}.
\]
This agrees with the seven-dimensional character value predicted by the
$\Delta$-boundary.  Indeed, write its unitary elliptic parameters as
$\alpha_p\beta_p=1$ and
$\alpha_p+\beta_p=\tau(p)p^{-11/2}$, where $\tau(p)$ is Ramanujan's
coefficient.  After the modulus shift, the Levi parameters are
$p^{3/2}(\alpha_p,\beta_p)$, up to simultaneous inversion.  The restriction
formula in \cite[Sections~11--13]{GanGrossSavin} gives
\[
 1+p^3+p^{-3}+\tau(p)(p^{-4}+p^{-7}),
\]
which at $p=2$, with $\tau(2)=-24$, is again $119/16$.
Thus the split Fourier coefficient agrees with the scalar boundary.
These arithmetic consequences retain information about cubic subalgebras
beyond the scalar trace variable.

\subsection{A cubic field and its binary cubic form}\label{subsec:cubic-field49}

The split cubic rings used in the classification detect frames.  We now
revisit the irreducible discriminant-$49$ example of
\cite[Section~6]{ElkiesGrossCubic}, deriving its exceptional embedding count
from a calculation in the standard order and the two-class mass identity.
Consider
\[
 F=\Q(\alpha)=\Q(\zeta_7+\zeta_7^{-1}),\qquad
 \alpha=\zeta_7+\zeta_7^{-1},\qquad
 C=\cO_F=\Z[\alpha],
\]
where $\zeta_7$ is a primitive seventh root of unity.  The polynomial of
$\alpha$ is $p(t)=t^3+t^2-2t-1$, of discriminant $49$; the corresponding
binary cubic form is
\[
 f(u,v)=u^3+u^2v-2uv^2-v^3.
\]
Thus $C$ is a maximal totally real cubic order.  The cyclotomic description
and the special-value formulas used below are recalled in
\cite[Chapters~2--4]{Washington}.

Let $\chi$ be the cubic Dirichlet character modulo $7$ with
$\chi(3)=\zeta_3$, where $\zeta_3=e^{2\pi i/3}$.  Then
$\zeta_F(s)=\zeta(s)L(s,\chi)L(s,\bar\chi)$.  With
$B_4(t)=t^4-2t^3+t^2-1/30$, the Bernoulli formula gives
\[
 L(-3,\chi)=-\frac{7^3}{4}\sum_{a=1}^6\chi(a)B_4(a/7)
           =\frac{32-22\zeta_3}{7},
 \qquad \zeta_F(-3)=\frac{79}{210}.
\]
In the normalization of \eqref{eq:G2-normalization}, the Eisenstein
coefficient is consequently $c_C(\mathscr E)=4550400/7$.

The standard theta coefficient can be counted directly.  A unital
norm-preserving embedding of $C$ is determined by $x$, the image of
$\alpha$, and the required characteristic polynomial is equivalent to
\[
 \Tr(x)=-1,\qquad T(x,x)=5,\qquad N(x)=1.
\]
Conversely, these conditions give an embedding: Cayley--Hamilton gives
$p(x)=0$, and the irreducibility of $p$ identifies the power-associative
subalgebra $\Q[x]$ with $F$, with its cubic norm.  Integral squares and the
monic relation give $\Z[x]\subset J_I$.
Write $x$ with diagonal entries $a_i\in\Z$ and opposite off-diagonal
entries $c_i\in\cO$, and put $n_i=n(c_i)$.  Then
$\sum a_i^2+2\sum n_i=5$ and $\sum a_i=-1$.  Up to permutation, the only
diagonals are
\[
 (-1,0,0),\qquad (-1,-1,1),\qquad (-2,1,0).
\]
Their total off-diagonal norms are $2,1,0$, respectively, so in every case
at least one $c_i$ vanishes and the trilinear term in
\eqref{eq:triality-cubic} is zero.  The norm is $0$ or $2$ for diagonal $(-1,-1,1)$, and $0$ for $(-2,1,0)$.  For diagonal $(-1,0,0)$ the norm is
$n_1$, hence $n_1=1$ and $n_2+n_3=1$.  There are three positions for $-1$,
two choices for the other nonzero off-diagonal entry, and $240$ octonion
units for each nonzero entry.  Therefore
\[
 c_C(\Theta_I)=N(C,J_I)=3\cdot2\cdot240^2=345600.
\]
Substituting into \eqref{eq:cubic-embedding-mass} now gives
\[
 c_C(\Theta_E)=N(C,J_E)
 =B\left(\frac5{2016}\frac{79}{210}-\frac{345600}{A}\right)
 =539136.
\]
Thus $C$ embeds into both classes, unlike the decomposable orders above.
The genus identity recovers its exceptional count from the standard one.
Conjugacy of the exceptional embeddings is proved in
\cite[Section~6]{ElkiesGrossCubic}; it is not required here.

\section{A Venkov argument for the root-system dichotomy}\label{sec:venkov}

Let $J$ be any definite integral Albert algebra in the hyperspecial genus.
The argument uses class-supported scalar and linear theta series,
integrality, and the Peirce decomposition, but neither the Siegel--Weil
average nor coefficient saturation.  We retain the notation $J^0$ and
$R(J^0)$ from \cref{sec:dictionary}; the lattice $J^0$ is even of rank $26$
and determinant $3$.
A quaternionic counterpart is the rank-$3$ icosian Venkov theorem: two-place
harmonic separation shows that the three trace-minimal Hermitian shells
projectivize to $5$-designs \cite[Theorem~4.8]{HohnHallJanko}.

\subsection{Class-supported weighted exceptional theta series}

For $z\in J^0\otimes\C$, let
\[
 P_z(x)=T(x,z)=T\left(x-\frac{\Tr(x)}3E,z\right).
\]
These are the trace-free linear functions on $J\otimes\C$, the dual of the
$26$-dimensional $F_4$-module.  For such a linear form $P$, put
\begin{equation}\label{eq:weighted-exceptional-theta}
 \vartheta_{J,P}(\tau)
 =\sum_{\substack{0\ne x\in J,\ x\ge0\\x^{\#}=0}}
   \sigma_3(c(x))P(x)q^{\Tr(x)}.
\end{equation}
Only this linear case of the weighted correspondence is needed below.

\begin{proposition}[class-supported linear modularity]\label{prop:class-supported-theta}
For every $J$ in the hyperspecial genus and $z\in J^0\otimes\C$, the series
$\vartheta_{J,P_z}$ is a cusp form of weight $14$ on $\mathrm{SL}_2(\Z)$.
The assertion is independent of the number of classes in the genus.
\end{proposition}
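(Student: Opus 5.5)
We follow the proof of \cref{lem:scalar-theta-local} almost verbatim, replacing the undifferentiated exceptional theta kernel by the weighted kernel for the dual pair $F_4\times\mathrm{PGL}_2\subset E_{7,3}$ attached to the $26$-dimensional representation of $F_4$ \cite[Section~5]{ShanTheta}. On $\mathscr X$ we take the algebraic modular form supported on the single class $[g]$ corresponding to $J$, valued in the $F_4$-module $J^0\otimes\C$. Its value is $g_J\,v$, where $v$ is the $\Aut(J)$-average of $z$. To reach every $z$, we first treat the general vector-valued form, whose value at $[g]$ is any element of $(J^0\otimes\C)^{\Aut(J)}$, and then use linearity in $z$. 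Alternatively, we replace the averaged form by the form whose single-point value pairs with the kernel to give $\sum_x\sigma_3(c(x))T(x,z)q^{\Tr x}$ directly, before any symmetrization. I would check in the cited correspondence which of these two formulations is available. The stabilizer weight $1/g_J$ cancels exactly as before.

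The transport step is the same as in \cref{lem:scalar-theta-local}. Strong approximation for the simply connected norm group $\mathbf S$ writes $g=\delta_fk$, and the contragredient reindexing $y=\delta^*x$ preserves positivity, Jordan rank one, content, and the trace exponent. The new feature is the weight $P_z$. Under reindexing it becomes the linear form $x\mapsto T(\delta^{*-1}y,z)$, and we must check that it remains trace-free relative to the transported identity. This holds because $T(x,z)$ with $z\perp E$ is carried to a pairing with a vector orthogonal to $\delta^{-1}E$, which is exactly what the kernel's harmonic weight requires. The Fourier calculation then produces $\vartheta_{J,P_z}$, with level one at the finite places by hyperspecial invariance.

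For the weight, the harmonic (degree-one) kernel raises the weight by $1$ for each unit of polynomial degree, measured with respect to the cubic automorphy factor, which contributes the factor $2$ in the index. This gives $12+2=14$, matching the identification of the $26$-dimensional $K$-type in \cite[Section~5.1]{ShanTheta}. For cuspidality, the constant term of $\vartheta_{J,P_z}$ is zero because the sum excludes $x=0$; equivalently, the contribution at $x=0$ is $P_z(0)=0$. The constant term at the single cusp of $\mathrm{SL}_2(\Z)$ therefore vanishes. I would also confirm, as in the proof of \cref{prop:genus-SW}, that no rank-one Freudenthal vectors with zero projection contribute, since definiteness again forces them to vanish.

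The main obstacle is pinning down the weighted kernel precisely. Specifically, one must confirm that the linear harmonic polynomial in the $26$-dimensional representation lifts to weight exactly $14$ with the coefficient $\sigma_3(c(x))P(x)$ and no additional content-dependent factor, and that this agrees with the normalization of the scalar case. I would first test this against $J_I$, where $\vartheta_{J_I,P_z}$ can be computed on $q^1$ and $q^2$ from the explicit rank-one elements of \cref{subsec:standard}.
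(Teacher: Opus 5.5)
\textbf{Gap: reaching an arbitrary $z$ from the averaged value.} Your overall route matches the paper's: Shan's differentiated $E_{7,3}$ kernel, the norm-group transport from \cref{lem:scalar-theta-local}, a form supported on the single class of $J$, and cuspidality from $P(0)=0$ at the single cusp. The gap is in getting back from the averaged value to an arbitrary $z$. A form with value $g_J\bar z$ at the class of $J$, where $\bar z$ is the $\Aut(J)$-average of $z$, lifts to $\vartheta_{J,P_{\bar z}}$. Nothing in your proposal identifies this with $\vartheta_{J,P_z}$.

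Neither of your two fixes works. Linearity in $z$, starting from $\Aut(J)$-invariant values, only reaches $z\in(J^0\otimes\C)^{\Aut(J)}$, and this space can be zero. For $J_I$, each nontrivial sign triple $(\varepsilon_1,\varepsilon_2,\varepsilon_3)$ with $\varepsilon_1\varepsilon_2\varepsilon_3=1$ negates two of the three Peirce spaces. The index permutations have no fixed vector on the trace-zero diagonal plane. So that route yields nothing for general $z$. The unsymmetrized single-point value is not available either. The value of an algebraic modular form at a class must be invariant under its stabilizer, and the theta integral with weight $1/g_J$ averages over $\Aut(J)$ in any case. So checking ``which formulation is available'' cannot close the argument.

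\textbf{The missing step.} Automorphisms of $J$ preserve positivity, Jordan rank one, trace, and content. Hence each shell $\{x\in J: x\ge0,\ x^{\#}=0,\ \Tr(x)=m\}$ is $\Aut(J)$-stable, and $\sigma_3(c(x))$ is constant on orbits. The substitution $x\mapsto\gamma x$ in each shell gives $\vartheta_{J,\gamma P}=\vartheta_{J,P}$ for every $\gamma\in\Aut(J)$. Therefore $\frac{1}{|\Aut(J)|}\vartheta_{J,\sum_\gamma\gamma P}=\vartheta_{J,P}$, and since $\gamma P_z=P_{\gamma z}$, this says $\vartheta_{J,P_z}=\vartheta_{J,P_{\bar z}}$. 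This is exactly the step the paper uses to identify the lift of the averaged form with $\vartheta_{J,P_z}$ for every $z$.

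\textbf{Smaller points.} Take the weight $14$ directly from Shan's theorem. Your ``cubic automorphy factor'' explanation is not the reason; the doubling reflects that the exponent $\Tr(x)$ is linear in $x$. Also, the zero-projection Freudenthal check belongs to the $G_2$ kernel in $E_{8,4}$, not to the $E_{7,3}$ kernel used here.
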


\begin{proof}
Use Shan's differentiated kernel and weighted Fourier calculation
\cite[Theorems~5.1.2 and~5.1.4, Corollary~5.1.5]{ShanTheta}.
The norm-lattice transport and contragredient reindexing proved in
\cref{lem:scalar-theta-local} supply the calculation for an arbitrary class;
no choice from a known list of Albert orders is needed.  We then apply
stabilizer averaging before enumerating the class set.  Put
$\Gamma_J=\Aut(J)$ and let $P=P_z$.  Choose an algebraic modular form supported on the class of $J$ whose
value there is
\[
 \sum_{\gamma\in\Gamma_J}\gamma P,
 \qquad (\gamma P)(x)=P(\gamma^{-1}x).
\]
This value is $\Gamma_J$-invariant.  The single differential of the exceptional
theta kernel gives weight $14$; its class-local Fourier calculation yields,
up to a fixed nonzero normalization,
\[
 \frac1{|\Gamma_J|}
 \vartheta_{J,\sum_{\gamma\in\Gamma_J}\gamma P}
 =\vartheta_{J,P}.
\]
The equality follows by changing variables in each $\Gamma_J$-stable rank-one
shell.  The other double cosets do not contribute, so neither their number nor
their identities enter.  Since $P(0)=0$, the constant term vanishes; level one
has a single cusp, so the modular form is cuspidal.
\end{proof}

Finally,
\[
 S_{14}(\mathrm{SL}_2(\Z))
 =\Delta\,M_2(\mathrm{SL}_2(\Z))=0.
\]
Thus \cref{prop:class-supported-theta} gives the following first-moment identity.

\begin{corollary}[linear moment identity]\label{cor:F4-linear-moment}
For every integer $m\ge1$,
\begin{equation}\label{eq:F4-linear-moment}
 \sum_{\substack{x\in J,\ x\ge0,\ x^{\#}=0\\\Tr(x)=m}}
 \sigma_3(c(x))\left(x-\frac m3E\right)=0.
\end{equation}
\end{corollary}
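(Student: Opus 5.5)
The identity follows directly from \cref{prop:class-supported-theta} together with the vanishing of $S_{14}(\mathrm{SL}_2(\Z))$, read coefficientwise and then dualized over $z$.

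First fix $z\in J^0\otimes\C$. By \cref{prop:class-supported-theta}, $\vartheta_{J,P_z}$ is a cusp form of weight $14$ and level one. Since $S_{14}(\mathrm{SL}_2(\Z))=\Delta M_2(\mathrm{SL}_2(\Z))=0$, this series vanishes identically. Its $q^m$ coefficient, read off from \eqref{eq:weighted-exceptional-theta}, is
\[
 \sum_{\substack{x\in J,\ x\ge0,\ x^{\#}=0\\\Tr(x)=m}}\sigma_3(c(x))\,P_z(x)
 =T\Bigl(\sum_{\substack{x\in J,\ x\ge0,\ x^{\#}=0\\\Tr(x)=m}}\sigma_3(c(x))\bigl(x-\tfrac m3E\bigr),\,z\Bigr).
\]
Here we used $P_z(x)=T(x-\Tr(x)E/3,z)$ and the fact that each shell is finite: by definiteness, the trace bounds $T(x,x)\le\Tr(x)^2$ for positive $x$, so the sum is finite and linearity of $T$ may be applied. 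This coefficient is $0$ for every $m\ge1$.

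Call the vector inside the pairing $v_m$. Each summand $x-\frac m3E$ has trace $0$, so $v_m\in J^0\otimes\Q$. We have shown that $T(v_m,z)=0$ for all $z\in J^0\otimes\C$. The trace form restricted to $J^0$ is nondegenerate, since $J^0$ is a positive definite lattice of determinant $3$. Therefore $v_m=0$, which is exactly \eqref{eq:F4-linear-moment}.

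There is no real obstacle; all the substance lies in \cref{prop:class-supported-theta}. The one step to state carefully is the reduction of the vector identity to its scalar pairings against every $z$. This requires both that $v_m$ lies in $J^0\otimes\Q$, which holds because $E$ is subtracted in the right proportion, and that the trace form is nondegenerate on that space. Note also that $m=0$ is excluded, because the sum defining $\vartheta_{J,P}$ omits $x=0$.
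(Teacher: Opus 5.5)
Your proposal is correct and follows essentially the same route as the paper: read off the $q^m$ coefficient of $\vartheta_{J,P_z}=0$, coming from \cref{prop:class-supported-theta} and $S_{14}(\mathrm{SL}_2(\Z))=0$. Then conclude from the fact that the trace-zero vector pairs to zero against all of $J^0\otimes\C$, on which the trace pairing is nondegenerate. You also spell out the finiteness of each shell via $T(x,x)\le\Tr(x)^2$, which the paper leaves implicit.
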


\begin{proof}
Pair the left-hand side with every element of $J^0\otimes\C$ and read the coefficient of
$q^m$ in $\vartheta_{J,P_z}=0$.  The displayed vector has trace zero, so the trace pairing
is nondegenerate on the space in which it lies.
\end{proof}

\subsection{Trace-one idempotents and ordinary roots}

Set
\[
 X_1(J)=\{e\in J:e\ge0,\ e^{\#}=0,\ \Tr(e)=1\},
 \qquad a=a(J)=|X_1(J)|.
\]
Every element of $X_1(J)$ is a primitive idempotent by the spectral theorem for the
Euclidean Albert algebra.

\begin{lemma}[the trace-one alternative]\label{lem:trace-one-alternative}
One has
\[
 a=0\quad\text{or}\quad a=3.
\]
If $a=3$, the three elements of $X_1(J)$ form a Jordan frame.
\end{lemma}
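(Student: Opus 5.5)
The plan is to use the linear moment identity \cref{cor:F4-linear-moment} at $m=1$. Every element of $X_1(J)$ has content $1$, since an element of trace $1$ cannot lie in $dJ$ for $d\ge2$. The identity at $m=1$ therefore reads
\[
 \sum_{e\in X_1(J)}\Bigl(e-\tfrac13E\Bigr)=0,
 \qquad\text{that is,}\qquad
 \sum_{e\in X_1(J)}e=\frac a3\,E.
\]
Integrality alone will not suffice, so the next step uses the pairwise trace values among trace-one idempotents. For distinct $e,f\in X_1(J)$, positivity gives $T(e,f)\ge0$, and Cauchy--Schwarz gives $T(e,f)<T(e,e)^{1/2}T(f,f)^{1/2}=1$, with equality only if $e=f$. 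Integrality of $T$ then forces $T(e,f)=0$ whenever $e\ne f$. By \cref{lem:positive-orthogonality}, the elements of $X_1(J)$ are therefore pairwise Jordan orthogonal primitive idempotents.

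A Euclidean Albert algebra has rank $3$, so it contains at most three pairwise orthogonal nonzero idempotents. Concretely, their sum is an idempotent of trace $a$, and that trace is at most $\Tr(E)=3$. Hence $a\le3$.

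Now pair the moment identity with $E$ and with a fixed $e_0\in X_1(J)$ when $a\ge1$:
\[
 T\Bigl(e_0,\sum_{e}e\Bigr)=T(e_0,e_0)=1,\qquad
 T\Bigl(e_0,\tfrac a3E\Bigr)=\tfrac a3 .
\]
This gives $a=3$ whenever $a\ne0$. For $a=3$, the three elements are pairwise orthogonal primitive idempotents with sum $E$ by the displayed identity, so they form a Jordan frame.

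The only delicate point I expect is the content claim and the strict Cauchy--Schwarz inequality. Strictness requires that two distinct idempotents of equal norm are not proportional, which holds because both have trace $1$. The rest is direct.
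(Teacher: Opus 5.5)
Your proof is correct and follows the paper's argument. Both use the $q^1$ coefficient of the linear moment identity, get pairwise trace orthogonality from positivity, Cauchy--Schwarz and integrality, and then obtain Jordan orthogonality via \cref{lem:positive-orthogonality}. The only difference is cosmetic: you conclude $a=3$ by pairing $\sum_e e=\frac a3E$ with a single $e_0$, whereas the paper observes that $\frac a3E$ is then itself an idempotent, and your separate rank bound $a\le3$ is not needed.
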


\begin{proof}
The coefficient of $q$ in \eqref{eq:F4-linear-moment} gives
\begin{equation}\label{eq:sum-trace-one}
 \sum_{e\in X_1(J)}e=\frac a3E.
\end{equation}
For primitive idempotents $e,f$ in a Euclidean Jordan algebra,
\[
 0\le T(e,f)\le1.
\]
The pairing is integral on $J$, and equality $T(e,f)=1$ in Cauchy--Schwarz forces $e=f$.
Thus distinct elements of $X_1(J)$ are orthogonal in the trace pairing.
By \cref{lem:positive-orthogonality}, they are Jordan orthogonal.  Therefore the left-hand
side of \eqref{eq:sum-trace-one} is an idempotent.  Hence $(a/3)E$ is an idempotent, and therefore
$a/3\in\{0,1\}$.  In the second case the orthogonal idempotents sum to $E$.
\end{proof}

By \cref{lem:root-to-idempotent}, a root gives an element of $X_1(J)$.
Conversely, in the frame supplied by \cref{lem:trace-one-alternative}, each
$e_i-e_j$ is a root.  We have proved:

\begin{corollary}\label{cor:root-iff-frame}
The lattice $J^0$ is rootless if and only if $a=0$.  If it has a root, then $a=3$ and $J$
contains an integral Jordan frame.
\end{corollary}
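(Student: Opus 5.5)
The corollary follows by combining \cref{lem:root-to-idempotent} with \cref{lem:trace-one-alternative}; it needs no new input from the theta series. There are two directions, and each is a short elementwise computation.

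First I show that a root forces $a\neq 0$. Let $r\in R(J^0)$. By \cref{lem:root-to-idempotent} the element $e=E-r^2$ lies in $X_1(J)$, so $a\ge1$. Contrapositively, $a=0$ implies that $J^0$ is rootless.

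Next I show that $a\ne0$ forces a root and an integral frame. If $a\ne0$, then \cref{lem:trace-one-alternative} gives $a=3$, and the three elements $e_1,e_2,e_3$ of $X_1(J)$ form a Jordan frame with $e_1+e_2+e_3=E$. They lie in $J$, so this is an integral Jordan frame, that is, a scale-$1$ frame in the sense of \cref{sec:dictionary}. For $i\ne j$ the element $r=e_i-e_j$ is integral and satisfies
\[
 \Tr(r)=1-1=0,\qquad
 T(r,r)=T(e_i,e_i)+T(e_j,e_j)-2T(e_i,e_j)=1+1-0=2.
\]
Here $T(e_i,e_i)=\Tr(e_i^2)=\Tr(e_i)=1$ because $e_i$ is idempotent, and $T(e_i,e_j)=\Tr(e_i\circ e_j)=0$ by Jordan orthogonality. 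Hence $r\in R(J^0)$, and $J^0$ is not rootless.

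Putting the two directions together gives the equivalence: $J^0$ is rootless if and only if $a=0$. For the second sentence, suppose $J^0$ has a root. The first direction gives $a\ge1$, so \cref{lem:trace-one-alternative} forces $a=3$. The resulting frame is then the integral Jordan frame asserted in the statement.

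None of these steps is a real obstacle. The only point to check is that the trace-one elements produced in the first direction are idempotents lying in $J$ itself, not merely in $J\otimes\Q$. This was already established in \cref{lem:root-to-idempotent} through $e=-r^{\#}$, so it needs no further argument.
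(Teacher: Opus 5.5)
Your proof is correct and follows the paper's own argument: \cref{lem:root-to-idempotent} turns a root into an element of $X_1(J)$, and conversely the integral frame supplied by \cref{lem:trace-one-alternative} yields the roots $e_i-e_j$. You have only written out the trace and norm check for $e_i-e_j$, which the paper leaves implicit.
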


\subsection{The second scalar coefficient}

Let $b$ be the number of primitive positive rank-one elements $x\in J$ with
$\Tr(x)=2$.  Apply the class-local scalar identity
\eqref{eq:theta-general-a}, with $a(J)=a$.
The nonprimitive rank-one elements of trace $2$ are exactly the elements $2e$ with
$e\in X_1(J)$.  Their content is $2$, and hence their weight is
$\sigma_3(2)=9$.  Comparing the coefficient of $q^2$ in
\eqref{eq:theta-general-a} gives
\[
 240(b+9a)
 =\frac{65520}{691}(2049+24)-24\cdot240a
 =196560-5760a.
\]
Thus
\begin{equation}\label{eq:F4-b-a}
 b=819-33a.
\end{equation}
Together with \cref{lem:trace-one-alternative}, this gives
\[
 \begin{array}{c|c|c}
 a&b&\text{ordinary roots}\\
 \hline
 0&819&\text{none},\\
 3&720&\text{present}.
 \end{array}
\]

\subsection{Peirce saturation in the rooted case}

Assume now that $a=3$, and write the integral frame as
\[
 e_1+e_2+e_3=E.
\]
Let $x$ be a primitive positive rank-one element of trace $2$, and put
\[
 d_i=T(x,e_i).
\]
Since $x=2p$ for a real primitive idempotent $p$, one has
\[
 0\le d_i\le2,
 \qquad d_i\in\Z,
 \qquad d_1+d_2+d_3=2.
\]
If $d_i=2$, equality in Cauchy--Schwarz gives $x=2e_i$, contrary to primitivity.  Hence
$(d_1,d_2,d_3)$ is a permutation of $(1,1,0)$.

Suppose, for example, that $d_k=0$, where $\{i,j,k\}=\{1,2,3\}$.  Orthogonality of the
positive elements $p$ and $e_k$ implies $p\circ e_k=0$.  The Peirce decomposition relative
to $e_k$ then gives
\begin{equation}\label{eq:trace-two-peirce-shape}
 x=e_i+e_j+y,
 \qquad y\in J\cap V_k=J\cap J_{ij}.
\end{equation}
As $T(x,x)=4$ and the Peirce decomposition is orthogonal,
\[
 T(y,y)=4-T(e_i+e_j,e_i+e_j)=2.
\]
Thus every primitive trace-$2$ rank-one element produces a root in exactly one
of the three rank-$8$ Peirce lattices.  For fixed $(i,j)$, the map
$x\mapsto y=x-e_i-e_j$ is injective.

The coefficient of $q^2$ in the vanishing linear theta series gives more.  For every
trace-free linear form $P=P_z$,
\[
 0=
 \sum_{\substack{x\text{ primitive},\ x\ge0,\ x^{\#}=0\\\Tr(x)=2}}P(x)
 +\sum_{e\in X_1(J)}\sigma_3(2)P(2e).
\]
The second sum is $18\sum_eP(e)=0$ by the coefficient of $q$.  Therefore the trace-free
part of the sum of the $720$ primitive elements vanishes, and
\begin{equation}\label{eq:sum-trace-two}
 \sum_{\substack{x\text{ primitive},\ x\ge0,\ x^{\#}=0\\\Tr(x)=2}}x
 =\frac{2\cdot720}{3}E=480E.
\end{equation}
Let $b_{ij}$ count the elements of the form \eqref{eq:trace-two-peirce-shape} supported on
$e_i,e_j$ and $V_k$.  Pairing \eqref{eq:sum-trace-two} with $e_1,e_2,e_3$ gives
\[
 b_{12}+b_{13}=480,
 \qquad
 b_{12}+b_{23}=480,
 \qquad
 b_{13}+b_{23}=480.
\]
These three equations yield
\begin{equation}\label{eq:240-each-peirce}
 b_{12}=b_{13}=b_{23}=240.
\end{equation}
Their sum also gives $b_{12}+b_{13}+b_{23}=720$, consistent with the total count.
By the injectivity above, each rank-$8$ Peirce lattice $J\cap V_k$ therefore
contains at least $240$ distinct roots.  The
norm-$2$ vectors of an integral positive-definite lattice form a simply laced root system,
and a simply laced root system of rank at most $8$ has at most $240$ roots, with equality
only for $E_8$.  Hence the three Peirce root systems are $E_8$.  The six vectors
$\pm(e_i-e_j)$ form an $A_2$ root system orthogonal to them.  We have therefore found a
full-rank sublattice
\[
 A_2\perp E_8\perp E_8\perp E_8\subseteq J^0.
\]
Its determinant is $3$, equal to $\det(J^0)$, so the inclusion has index one.

The determinant equality proves the rooted assertion of \cref{thm:venkov-intro};
\cref{cor:root-iff-frame} and \eqref{eq:F4-b-a} give the remaining assertions.
The argument used neither the number of Albert classes nor a mass identity.  Its
level-one hypothesis is essential: both the vanishing of $S_{14}$ and the
two-dimensionality of $M_{12}$ refer to $\mathrm{SL}_2(\Z)$.  The theta identities
explain the two possible sets of root and rank-one data; they do not alone
classify the integral cubic structures realizing those data.  That is the
additional role of the frame bounds and mass saturation.

\section{Finite geometry and identification of the groups}\label{sec:groups}

All class numbers, group orders, and frame-transitivity statements are now
known.  We first identify the automorphisms of the integral, lattice, and
incidence structures, then give the conventional group names.

\subsection{Frame symmetries and the full finite geometry}

\begin{corollary}[lifting the fixed-datum symmetries]\label{cor:shadow-equality}
For a primitive scale-$2$ frame $\mathcal F$ in $J_E$, restriction gives an isomorphism
\[
 \Aut(J_E)_{\mathcal F}\cong\Aut(\mathcal D_{\mathcal F}).
\]
Thus every automorphism of its oriented deletion datum is induced by an Albert
automorphism.  In particular,
\[
 1\longrightarrow C_2^5\longrightarrow\Aut(J_E)_{\mathcal F}
 \longrightarrow 2^6:\bigl((7:3)\times S_3\bigr)\longrightarrow1.
\]
The inverse image $P_{\mathcal F}$ of the normal $2^6$ has order $2^{11}$, and
\[
 \Aut(J_E)_{\mathcal F}/P_{\mathcal F}\cong(7:3)\times S_3.
\]
In particular, the frame stabilizer is solvable and induces the full $S_3$ on
the three elements of the frame.
\end{corollary}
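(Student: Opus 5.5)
The plan is to compare orders. By \cref{lem:shadow-faithful}, restriction gives an injection $\Aut(J_E)_{\mathcal F}\hookrightarrow\Aut(\mathcal D_{\mathcal F})$, and \cref{prop:frame-stabilizer-bound} gives $|\Aut(\mathcal D_{\mathcal F})|=258048$. The proof of \cref{thm:main}, via the saturation principle \cref{thm:mass-assisted-conway}, shows that the unique framed class has stabilizer of order exactly $258048$. Both finite groups therefore have the same order, and the injection is an isomorphism. Hence every automorphism of the oriented deletion datum is induced by an Albert automorphism of $J_E$.

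Next I transport the exact sequence \eqref{eq:correct-deletion-sequence} along this isomorphism. Its kernel $\Sigma_{\mathcal D}$ has order $2^5$. Since $\Sigma\cong C_2^{24}$ is elementary abelian, the kernel is $C_2^5$. Its image $Q_{\mathcal D}$ has the structure \eqref{eq:eight-orbit-stabilizer}, namely $2^6:\bigl((7:3)\times S_3\bigr)$. This gives the displayed extension for $\Aut(J_E)_{\mathcal F}$.

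Let $P_{\mathcal F}$ be the inverse image of the normal $2^6$. Then $P_{\mathcal F}$ is normal, of order $2^5\cdot2^6=2^{11}$, and the quotient by it is $(7:3)\times S_3$. Solvability follows because $P_{\mathcal F}$ is a $2$-group and $(7:3)\times S_3$ is solvable.

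For the last claim, the stabilizer acts on the three frame elements through its permutation of the octads. This is compatible with the labels $w_i\mapsto w_{\pi(i)}$ in \cref{lem:finite-deletion-orbits}, by the correspondence of octads with points of $\ell$ in \cref{prop:deletion-package}. The $S_3$ factor in \eqref{eq:eight-orbit-structure} is block-permuting, so it maps onto $S_3$ on the octads, and it survives in the image $Q_{\mathcal D}$. Lifting through the isomorphism shows the frame stabilizer induces the full $S_3$ on $\{t_1,t_2,t_3\}$.

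The main point needing care is this $S_3$ step. One has to check that the factor labelled $S_3$ in \eqref{eq:eight-orbit-stabilizer} acts faithfully as the full symmetric group on the three octads, not trivially. I would verify this from the description of $T_{\mathcal T}$ as the trio stabilizer. Its image in the permutation group of the trio is all of $S_3$, because $M_{24}$ induces $S_3$ on a trio. The normal $2^6$ and the factor $L_3(2)$ fix each octad, so the $S_3$ factor alone must account for that image. Everything else is order counting.
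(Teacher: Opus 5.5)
Your proposal is correct and follows the paper's own proof. You compare the orders of the injection from \cref{lem:shadow-faithful} with the saturated frame-stabilizer order $258048$, then transport \eqref{eq:correct-deletion-sequence} and \eqref{eq:eight-orbit-stabilizer}, and read off the full $S_3$ from the block-permuting factor. Your added check, that this factor maps onto the trio permutations because the normal $2^6$ and the simple $L_3(2)$ must act trivially on the three octads, only spells out what the paper states in one line.
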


\begin{proof}
The injection in \eqref{eq:stabilizer-injection} has source and target of order
$258048$.  The exact sequence is \eqref{eq:correct-deletion-sequence}, with
quotient \eqref{eq:eight-orbit-stabilizer}.  Its $S_3$ factor permutes the octads,
hence the frame elements.  Taking the inverse image of the normal $2^6$ proves
the remaining assertions.  No claim that $P_{\mathcal F}$ is elementary abelian,
or that the extension by it splits, is needed.
\end{proof}

Frame transitivity and the full $S_3$ action of \cref{cor:shadow-equality} imply
transitivity on incident point--frame pairs.  Thus, for $t\in\mathcal F$,
\begin{equation}\label{eq:point-flag-stabilizers}
 |\Aut(J_E)_t|=\frac{B}{819}=774144,\qquad
 |\Aut(J_E)_{t,\mathcal F}|=\frac{B}{819\cdot9}=86016.
\end{equation}
These numbers and the coarse frame-stabilizer structure are obtained without
using tables of subgroups.

\begin{corollary}[Albert and hexagon automorphisms]\label{cor:albert-hexagon}
Let $H$ be the generalized hexagon on $\mathcal R_2(J_E)$ and let
$\Aut^+(L_{26})$ preserve the nonzero discriminant class containing $X$.  Restriction
gives
\[
 \Aut(J_E)\cong\Aut^+(L_{26})\cong\Aut(H),\qquad
 \Aut(L_{26})\cong C_2\times\Aut(J_E).
\]
In particular, every incidence automorphism of $H$ extends to an automorphism of
the integral Albert algebra.
\end{corollary}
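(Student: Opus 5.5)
The plan is to build three restriction maps and show they are injective with order control, closing the argument by the known order $B$.

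First, the map $\Aut(J_E)\to\Aut^+(L_{26})$. An Albert automorphism fixes $E$ and preserves the trace pairing. Hence it preserves $J^0=L_{26}$. It also permutes $\mathcal R_2(J_E)$, so it preserves the class $C_+$ containing $X=\{x_t\}$. The map is injective, because $J\otimes\Q=\Q E\oplus J^0\otimes\Q$.

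For surjectivity, take $g\in\Aut^+(L_{26})$. Extend $g$ to $\Q E\perp L_{26}\otimes\Q$ by fixing $E$. Since $g$ preserves $C_+$, it preserves the index-three glue, so it preserves $\Lambda_J=(J,T)$ as described in \cref{subsec:binary-shadow}. Then $g$ permutes $X$, hence $\mathcal R_2(J_E)$. By \cref{prop:lines-frames} it also permutes the zero-sum triples, that is, the primitive scale-$2$ frames. What is missing is that $g$ respects the cubic structure. I will get this by an order count rather than by checking the cubic norm directly. Pick a frame $\mathcal F$. Then $g$ maps $\mathcal D_{\mathcal F}$ to $\mathcal D_{g\mathcal F}$, using only the trace lattice, $\xi=-E$ and the line. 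By frame transitivity (\cref{thm:main}) there is $h\in\Aut(J_E)$ with $h\mathcal F=g\mathcal F$. So $h^{-1}g$ lies in the stabilizer of $\mathcal F$ in the lattice automorphisms of $(\Lambda_J,\xi)$. The faithfulness argument of \cref{lem:shadow-faithful} uses only lattice data, so this stabilizer injects into $\Aut(\mathcal D_{\mathcal F})$. By \cref{cor:shadow-equality}, every element of $\Aut(\mathcal D_{\mathcal F})$ is induced by an Albert automorphism. Hence $h^{-1}g\in\Aut(J_E)$, which proves surjectivity. The same count, with $819\cdot 9/3$ lines, gives $|\Aut^+(L_{26})|\le 2457\cdot258048=B$ directly.

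Second, $\Aut^+(L_{26})\to\Aut(H)$. A lattice automorphism preserving $X$ preserves zero-sum triples, so it induces an incidence automorphism. It is injective because $X$ spans $L_{26}\otimes\Q$. To see this, note that the span is invariant under the frame-transitive group, and it contains each $U_\ell$ together with the root pairs $y$ in the octads, which span $\Gamma\otimes\Q=K\otimes\Q$. With \eqref{eq:deletion-decomposition} this gives the whole space.

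For surjectivity onto $\Aut(H)$, I would show that incidence determines all the inner products. By \cref{thm:frame-theorem}, $(x,y)$ is determined by the point-graph distance, so any $\sigma\in\Aut(H)$ preserves the Gram matrix on $X$. It therefore extends linearly to an isometry of $\Q X=L_{26}\otimes\Q$. This isometry must preserve $L_{26}$. The obstacle here is to show that $L_{26}$ is recovered from $X$, for example as the lattice generated by the differences $x-y$, or as its saturation by a condition expressed through $X$. I would first try to show that the differences $x-y$ with $(x,y)=2/3$ generate $L_{26}$. For this, compute the determinant of their span using the deletion description: $\Gamma$ is generated by roots and glue, and the glue comes from differences of hexagon points. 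If the span turns out to be a proper sublattice, I would instead characterize $L_{26}$ as $\{v\in\Q X:(v,x)\in\Z+c\ \forall x\in X\}$ together with integrality, which is manifestly $\sigma$-invariant.

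Third, the full lattice group. $-1$ acts on $L_{26}$ and swaps the two nonzero discriminant classes, since $D(L_{26})\cong\Z/3$. So $\Aut^+(L_{26})$ has index $2$ in $\Aut(L_{26})$ and is complemented by the central element $-1$. This gives $\Aut(L_{26})\cong C_2\times\Aut^+(L_{26})\cong C_2\times\Aut(J_E)$.
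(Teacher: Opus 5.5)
Your identification $\Aut(J_E)\cong\Aut^+(L_{26})$ and the $C_2$ factor from $-1$ are fine. The paper bounds $|\Aut^+(L_{26})|\le2457\cdot258048=B$ using the lattice-only faithfulness argument of \cref{lem:shadow-faithful}, and your lift through frame transitivity and \cref{cor:shadow-equality} is that count in another form.

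The gap is the surjectivity of $\Aut^+(L_{26})\to\Aut(H)$. Both $\Aut^+(L_{26})\cong\Aut(H)$ and the final ``in particular'' depend on it. You extend an incidence automorphism $\sigma$ to a rational isometry $\tilde\sigma$ of $\Q X=L_{26}\otimes\Q$, but you never prove $\tilde\sigma(L_{26})=L_{26}$; you only name it as the obstacle. The missing input is that $X$ generates $L_{26}^\vee$. The paper takes this from the saturation argument in the lattice--hexagon correspondence \cite[Proposition~4.7]{Hoehn26}. Given it, $\tilde\sigma$ preserves $\Z X=L_{26}^\vee$, hence its dual $L_{26}$. It also preserves $C_+$, because it permutes $X$.

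Your first idea aims at an equivalent statement. On a line $x+y+z=0$ gives $3x=(x-y)+(x-z)$, so the differences generate $L_{26}$ exactly when $X$ generates $L_{26}^\vee$. However, the proposed determinant computation is not carried out, and it is not a formality. The three points of $\ell$ and their $48$ off-line neighbours project into $K\otimes\Q$ only onto $0$ and the octad roots, which span $\bigoplus_j\Z r_j$. The projection of $L_{26}^\vee$, by contrast, is $K^\vee\supseteq\Gamma$. Reaching the Golay glue therefore requires the $768$ points not collinear with any point of $\ell$.

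Your fallback does not avoid this. Since $(v,x)\in\Z$ for $v\in L_{26}$ and $x\in X$, the set you describe is $(\Z X)^\vee$. It contains $L_{26}$ with index $[L_{26}^\vee:\Z X]$, so it equals $L_{26}$ only under the same generation statement. Adding an integrality condition gives no evident $\sigma$-invariant way to cut it down to $L_{26}$ otherwise.
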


\begin{proof}
Albert automorphisms fix $E$ and preserve $\mathcal R_2(J_E)$, so restriction
embeds $\Aut(J_E)$ into $G^+:=\Aut^+(L_{26})$.  Every element of $G^+$ preserves
$X$ and its zero-sum triples, hence permutes the $2457$ lines.  It extends
uniquely to the index-three gluing $\Lambda_{J_E}$ by fixing $\xi=-E$.
For a line $\ell$, equivariance of deletion and the kernel argument of
\cref{lem:shadow-faithful} give an injection
$G^+_{\ell}\hookrightarrow\Aut(\mathcal D_{\mathcal F})$; that argument uses
only the marked trace lattice, not its cubic.  Thus
\[
 |G^+|\le2457\cdot258048=B=|\Aut(J_E)|,
\]
where the equality on the right is supplied by \cref{thm:main}.  The inclusion
is therefore an equality.  The central isometry $-1$ interchanges the two
nonzero discriminant classes and supplies the direct factor $C_2$ in
$\Aut(L_{26})$.

It remains to pass from lattice isometries to all incidence automorphisms.
The lattice--hexagon correspondence gives
$\Aut(H)\cong G^+$ \cite[Proposition~4.7]{Hoehn26}: distances determine the
centered Gram matrix, and the saturation argument there shows that $X$
generates $L_{26}^{\vee}$.  Every incidence automorphism therefore extends
uniquely to a lattice isometry preserving the chosen discriminant class.
Only this last step requires that correspondence, not lattice uniqueness or a
prior group-order calculation.
\end{proof}

This is a consequence of the mass as well as of the finite geometry.  The
orthogonal extension of an incidence automorphism is not assumed to preserve
the cubic tensor; equality of the two finite groups forces that preservation.
Similarly, complete Peirce--triality data become unique by frame transitivity,
with their rational tensor retained.  Neither conclusion follows from the
additive lattice alone.

\subsection{Cayley-plane rigidity}

\begin{remark}[Hoggar's geometric uniqueness assertion]\label{rem:hoggar}
Hoggar states that uniqueness of the generalized hexagon implies uniqueness of the
tight $819$-point Cayley-plane $5$-design, referring to a simple argument but not
supplying it there \cite[p.~93]{Hoggar1989}.  An incidence isomorphism determines the
centered Gram matrix and hence an orthogonal equivalence; the additional point is
that this equivalence must preserve the Albert product to lie in $F_4(\R)$.

The companion paper supplies this step explicitly for projective $3$-designs,
and hence for the tight $5$-design, by the third projective moment
\cite[Lemmas~5.2--5.3 and Theorem~5.6]{Hoehn26}.  For centered points $x_p=2p-\tfrac23E$ of a tight design
and $a,b\in J^0\otimes\R$, it gives
\[
 a\circ b-\frac{T(a,b)}3E
   =\frac1{24}\sum_p T(a,x_p)T(b,x_p)x_p.
\]
An angle-preserving bijection therefore preserves the traceless product; fixing $E$
recovers the full Albert product.  This makes the implication asserted by Hoggar
explicit for arbitrary realizations in the Cayley plane, whereas
\cref{cor:albert-hexagon} obtains the symmetry equality for the integral model
from the mass.  Neither the moment argument nor a prior geometric uniqueness
theorem is an input to mass saturation.

The quaternionic counterpart reconstructs the product on $\Her_3(\mathbb H)$
from the second and third moments and gives rigidity under
$\operatorname{PSp}(3)$ \cite[Lemma~7.3 and Theorem~7.4]{HohnHallJanko}.
There the distinction between angle-preserving and incidence symmetries is
essential: the Hall--Janko configuration has projective stabilizer $J_2$, whereas
its near octagon has automorphism group $J_2:2$.  The outer involution exchanges
the two golden-angle relations \cite[Corollary~7.6]{HohnHallJanko}.
\end{remark}

\subsection{Identification of the finite groups}

\begin{corollary}[group identifications]\label{cor:group-names}
The standard automorphism group is
\[
 \Aut(J_I)\cong\Tri(\cO)\rtimes S_3,
 \qquad \Tri(\cO)\text{ has shape }2^2\!\cdot\Omega_8^+(2).
\]
In the usual finite-group notation this is written
$2^2\!\cdot O_8^+(2)\!\cdot S_3$.  For the exotic order,
\[
 \Aut(J_E)\cong{}^3D_4(2):3.
\]
\end{corollary}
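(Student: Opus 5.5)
The identification splits into a standard half and an exotic half. In both, the group orders are already fixed by \cref{thm:main}, so it suffices to produce a subgroup of the named structure and the correct order, or a homomorphism into a known group, and then compare orders.

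For the standard order, start from the proof of \cref{prop:standard-bound}. There every automorphism permutes the three diagonal idempotents, and the kernel is $\Tri(\cO)$. Coordinate permutations of $\Her_3(\cO)$, composed with octonion conjugation where needed, are integral automorphisms realizing all of $S_3$, so the map to $S_3$ splits and $\Aut(J_I)\cong\Tri(\cO)\rtimes S_3$. Since $A=6|W(E_8)|$, all the inequalities in that proof are equalities. Hence $|\Tri(\cO)|=|W(E_8)|$, projection to $\gamma$ has kernel exactly $\{\pm1\}$ (diagonally $(-1,-1,1)$), and its image is all of $\SO(E_8)=W(E_8)^+$.

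Next use the classical structure $W(E_8)=2\cdot O_8^+(2)\!:\!2$, so that $W(E_8)^+\cong 2\cdot\Omega_8^+(2)$ with centre $\{\pm1\}$. This makes $\Tri(\cO)$ a central extension of $\Omega_8^+(2)$ by the group $\{(\pm1,\pm1,\pm1)\text{ with product }1\}\cong2^2$. That group is visibly contained in $\Tri(\cO)$, and by the triality symmetry of the three projections it is exactly the kernel of the action on $E_8/2E_8$. This gives the shape $2^2\!\cdot\Omega_8^+(2)$, and the $S_3$ acts as triality graph automorphisms, giving the notation $2^2\!\cdot O_8^+(2)\!\cdot S_3$.

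For the exotic order, work through \cref{cor:albert-hexagon}: $\Aut(J_E)\cong\Aut(H)$ with $H$ a generalized hexagon of order $(2,8)$ on $819$ points. First show that $H$ is the dual twisted triality hexagon $T(8,2)^D$ of ${}^3D_4(2)$. I would not rely on a general uniqueness theorem for hexagons of order $(2,8)$, which is not available unconditionally. Instead, the plan is to exhibit a subgroup of $\Aut(J_E)$ isomorphic to ${}^3D_4(2)$. Comparing orders then gives the result: $|{}^3D_4(2)|=2^{12}3^4 7^2 13$ and $B=3\,|{}^3D_4(2)|$.

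I would build the subgroup from the local data of \cref{cor:shadow-equality} and \eqref{eq:point-flag-stabilizers}. The flag stabilizers are parabolic-like groups with a normal $2$-subgroup $P_{\mathcal F}$ of order $2^{11}$ and quotient $(7:3)\times S_3$. The Steinberg--Tits theory of Moufang hexagons does the rest: if the root elations in the unipotent radicals act as required, $H$ is Moufang. The classification of finite Moufang hexagons with parameters $(2,8)$ then identifies $H$ with the twisted triality hexagon over $\F_8/\F_2$. Its full automorphism group is ${}^3D_4(2)\rtimes\mathrm{Gal}(\F_8/\F_2)={}^3D_4(2):3$, and its order matches $B$, which confirms the identification.

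The main obstacle is verifying the Moufang condition from the frame stabilizer. For each path of length $6$ (a root) one needs a group of order $2$ or $8$ fixing all points and lines at distance at most $1$ from its interior and acting regularly on the remaining neighbours. I would try to extract these root groups inside $P_{\mathcal F}$ using the $2^6$ of the Golay trio stabilizer and the $C_2^5$ sign kernel. Regularity would come from orbit counts: $9$ lines per point and $3$ points per line, with the $7:3$ factor acting transitively on the $8$ further lines through a point. If this becomes too delicate, the fallback is Cohen--Tits: a generalized hexagon of order $(2,8)$ admitting a group transitive on flags with these stabilizer orders is the dual twisted triality hexagon, since flag-transitivity and the flag stabilizer of order $86016$ are already established.
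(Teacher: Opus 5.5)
\textbf{Verdict: the standard half is correct and matches the paper; the exotic half has a genuine gap.}

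For $\Aut(J_I)$ your argument is the paper's. The index permutations split the map to $S_3$. Equality in \cref{prop:standard-bound} forces $\Tri(\cO)\to\SO(E_8)$ to be onto with kernel $\langle(-1,-1,1)\rangle$. The preimage of $\{\pm1\}$ is then the order-four group of sign triples with product $1$.

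The gap is at the one step that carries the content for $J_E$: identifying $H$ with the point--line dual of the twisted triality hexagon. Your main route, verifying the Moufang condition from the frame stabilizer, is stated only as something you ``would try'', and the available data do not supply it.
\begin{itemize}
\item \cref{cor:shadow-equality} gives only $|P_{\mathcal F}|=2^{11}$ and the quotient $(7:3)\times S_3$. The paper deliberately leaves the structure of $P_{\mathcal F}$ undetermined.
\item Orbit counts give transitivity, not root elations. A root elation must fix every element incident with an interior element of a $6$-path and act regularly on the remaining neighbours of an extremal element.
\item This is needed for roots of both types, with groups of orders $2$ and $8$. Nothing in the Golay $2^6$ or the sign kernel $C_2^5$ has been shown to have these properties.
\end{itemize}
So the classification of finite Moufang hexagons is never actually applicable. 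Separately, your first formulation, ``exhibit a subgroup isomorphic to ${}^3D_4(2)$ and compare orders'', would not suffice on its own. A group of order $3|{}^3D_4(2)|$ containing ${}^3D_4(2)$ could be ${}^3D_4(2)\times 3$ or ${}^3D_4(2):3$. You need the faithful action of the field automorphism on $H$ as well.

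The fix is simpler than your plan, because its premise is false. Uniqueness of generalized hexagons of order $(2,8)$ is available unconditionally. The Cohen--Tits theorem states, with no transitivity hypothesis, that every such hexagon is isomorphic to the dual of $T(8,2)$. The Niemeier reconstruction in the companion paper gives an independent route. With that citation the argument closes as in the paper:
\begin{itemize}
\item Steinberg's and Tits' constructions give a faithful collineation action ${}^3D_4(2):3\le\Aut(H)$.
\item The order formula gives $|{}^3D_4(2):3|=B$.
\item \cref{cor:albert-hexagon} together with \cref{thm:main} gives $|\Aut(H)|=|\Aut(J_E)|=B$, which forces equality.
\end{itemize}
Neither a Moufang verification nor prior knowledge of the full collineation group of $T(8,2)$ is needed. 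Your ``fallback'' is essentially this argument once the superfluous flag-transitivity hypothesis is dropped. As written, though, it invokes the very uniqueness theorem you had just declared unavailable, so the proposal does not yet establish $\Aut(J_E)\cong{}^3D_4(2):3$.
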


\begin{proof}
For $J_I$, the kernel of the action on the diagonal frame is the integral
triality group.  Permuting matrix indices gives an $S_3$ of Albert automorphisms
and splits this action.  Equality in \cref{prop:standard-bound} shows that
projection of $\Tri(\cO)$ onto the third orthogonal factor is onto $\SO(E_8)$,
with kernel of order $2$.

The quadratic space $E_8/2E_8$, with $q(\bar x)=x^2/2\pmod2$, has plus type.
The classical reduction of the $E_8$ Weyl group gives
\[
 \SO(E_8)/\{\pm1\}\cong\Omega_8^+(2);
\]
see \cite[Chapter~4]{ConwaySloane} and \cite[Example~7.3]{Conrad}.
The inverse image of $\{\pm1\}$ in $\Tri(\cO)$ is the four-element group of
scalar triples $(\varepsilon_1,\varepsilon_2,\varepsilon_3)$ with
$\varepsilon_i\in\{\pm1\}$ and
$\varepsilon_1\varepsilon_2\varepsilon_3=1$.  This proves the stated central
extension shape.  The intrinsic identification is with $\Tri(\cO)\rtimes S_3$;
the dotted notation summarizes its factors.  Here $\Omega_8^+(2)$ is the simple
group, often denoted $O_8^+(2)$ in finite-group notation, not the full isometry
group of the quadratic space over $\F_2$.

For $J_E$, \cref{cor:albert-hexagon} identifies the arithmetic automorphism group
with $\Aut(H)$.  The uniqueness theorem for the generalized hexagon of order
$(2,8)$ identifies $H$ with the point--line dual of the twisted triality
hexagon $T(8,2)$, whose order is $(8,2)$ \cite{CohenTits}; alternatively one
may use the independent Niemeier
reconstruction in \cite[Sections~4 and~6]{Hoehn26}.  This uniqueness is used here
only to name the group, after its order has been proved.
Steinberg's twisted $D_4$ construction and Tits' triality construction give a
faithful collineation action
\[
 {}^3D_4(2):3\;\leq\;\Aut(H)
\]
\cite{Steinberg1959,Tits1959,TitsBuildings,CarterLie}.  The standard order formula
\[
 |{}^3D_4(q)|=q^{12}(q^8+q^4+1)(q^6-1)(q^2-1)
\]
gives $|{}^3D_4(2):3|=B$.  Since $|\Aut(H)|=B$ by the mass argument, the
inclusion is an equality.  Neither identification uses an ATLAS entry for the
full collineation group or its order.
\end{proof}

For an a posteriori check, a Borel subgroup $\mathsf B$ of ${}^3D_4(2)$
has order $2^{12}(2-1)(2^3-1)=28672$.  In the point--line convention of $H$,
its indices in the point and line parabolics are $9$ and $3$, respectively
\cite{TitsBuildings,CarterLie}.  The standard order-three outer automorphism normalizes
$\mathsf B$ and both parabolics.  Hence the semilinear stabilizers have
orders
\[
 \begin{aligned}
 |\Stab(x)|&=3\cdot9\cdot|\mathsf B|=774144,\\
 |\Stab(\ell)|&=3\cdot3\cdot|\mathsf B|=258048,\\
 |\Stab(x,\ell)|&=3\cdot|\mathsf B|=86016\qquad(x\in\ell).
 \end{aligned}
\]
These are exactly the point, frame, and flag stabilizer orders obtained
before the group identification.

\appendix

\section{Complete Peirce--triality gluing data}\label{sec:PTcode}

The gluing code records the full framed Albert order, not merely its trace lattice.
We give the reconstruction statement used in \cref{thm:main}, with the separation
condition needed to recover the specified Peirce intersections.

\subsection{The separated lattice}

For an integral Albert algebra $J$ with a rational frame, use
\eqref{eq:Peirce} and put
\begin{equation}\label{eq:pieces}
 A_{\mathrm{diag}}=J\cap D,\qquad M_i=J\cap V_i,\qquad
 L_{\mathrm{sep}}=A_{\mathrm{diag}}\perp M_1\perp M_2\perp M_3.
\end{equation}
This is a full sublattice of $J$.  With duals taken for $T$,
\[
 L_{\mathrm{sep}}\subset J\subset L_{\mathrm{sep}}^\vee,\qquad C_{\mathrm{gl}}=J/L_{\mathrm{sep}}\subset D(L_{\mathrm{sep}})=L_{\mathrm{sep}}^\vee/L_{\mathrm{sep}}.
\]
The rational forms $q_i$ and the triality tensor $\tau$ are retained together with
this finite additive datum.  Write $\mathcal C(J,\mathcal F)$ for the resulting
complete datum.  It distinguishes a cubic order from its quadratic lattice.  Since the summands are orthogonal,
\[
 D(L_{\mathrm{sep}})=D(A_{\mathrm{diag}})\oplus D(M_1)\oplus D(M_2)\oplus D(M_3).
\]
Because the summands in \eqref{eq:pieces} are the actual intersections with their
rational spaces, the glue satisfies
\begin{equation}\label{eq:separated-code}
 C_{\mathrm{gl}}\cap D(A_{\mathrm{diag}})=0,\qquad C_{\mathrm{gl}}\cap D(M_i)=0\quad(1\le i\le3),
\end{equation}
where each discriminant group is embedded as its own summand.

\begin{definition}[admissible complete gluing datum]\label{def:PTcode}
A Peirce--triality gluing datum consists of the rational framed Albert package
\eqref{eq:triality-cubic}, full lattices $A_{\mathrm{diag}}\subset D$ and
$M_i\subset V_i$, and a subgroup $C_{\mathrm{gl}}\subset D(L_{\mathrm{sep}})$, where $L_{\mathrm{sep}}$ is integral for $T$.
It is \emph{admissible} if $C_{\mathrm{gl}}$ is isotropic for the discriminant bilinear form,
satisfies \eqref{eq:separated-code}, and its inverse image
\begin{equation}\label{eq:reconstructed-order}
 J(C_{\mathrm{gl}})=\{x\in L_{\mathrm{sep}}^\vee:x+L_{\mathrm{sep}}\in C_{\mathrm{gl}}\}
\end{equation}
contains $E$ and is an Albert algebra over $\Z$ for the given rational cubic
structure.  Equivalently, the norm and adjoint restrict as integral polynomial laws,
and the resulting local cubic structure is the split Albert algebra over every
$\Z_p$.  A specified scale is recovered as the least $m>0$ with $me_i\in J(C_{\mathrm{gl}})$.
\end{definition}

The polynomial-law and local conditions are part of admissibility; additive
isotropy alone does not imply them, especially at $2$.  Similarly, without
\eqref{eq:separated-code} the inverse image could enlarge one of the individual
Peirce lattices.  No general classification of admissible tensors or of their
congruences is assumed.

An isomorphism of data is an isomorphism of the rational framed Albert packages,
allowing a simultaneous permutation of the three frame elements and Peirce indices,
which carries the four lattices and $C_{\mathrm{gl}}$ to their counterparts.  Thus it preserves
the identity, cubic expression, and full glue.  For ordered frames the permutation
is omitted.

\begin{proposition}[reconstruction]\label{thm:reconstruction}
Taking the Peirce intersections and their quotient code, and taking the inverse
image \eqref{eq:reconstructed-order}, are inverse constructions on framed integral
Albert algebras and admissible complete gluing data.  They identify isomorphisms;
in particular,
\[
 \Aut(J,\mathcal F)\cong\Aut(\mathcal C(J,\mathcal F)).
\]
\end{proposition}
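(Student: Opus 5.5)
The plan is to check the two constructions separately, show that each undoes the other, and then argue that isomorphisms correspond. Start from a framed integral Albert algebra $(J,\mathcal F)$. Form $A_{\mathrm{diag}}$, $M_i$, $L_{\mathrm{sep}}$ as in \eqref{eq:pieces} and put $C_{\mathrm{gl}}=J/L_{\mathrm{sep}}$.

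\textbf{From algebras to data.} I will first check that $\mathcal C(J,\mathcal F)$ is admissible.
\begin{enumerate}
\item $L_{\mathrm{sep}}$ is integral for $T$, since it is a sublattice of $J$.
\item $L_{\mathrm{sep}}$ has full rank. This follows from the finite index of $J\cap W$ in the projection of $J$ onto each rational Peirce space $W$.
\item $J\subset L_{\mathrm{sep}}^\vee$, because $T$ is integral on $J$. Hence $C_{\mathrm{gl}}$ is isotropic.
\item Separation \eqref{eq:separated-code} holds. If a class in $C_{\mathrm{gl}}$ lies in $D(M_i)$, it is represented by some $x\in J\cap(M_i^\vee)\subset J\cap V_i=M_i$, so the class is zero. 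The diagonal summand is handled in the same way.
\item The inverse image $J(C_{\mathrm{gl}})$ equals $J$ by definition. It therefore contains $E$, carries integral polynomial laws, and is locally split.
\end{enumerate}

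\textbf{From data to algebras.} Conversely, start with an admissible datum and set $J=J(C_{\mathrm{gl}})$. By admissibility, $J$ is a framed Albert algebra over $\Z$. What remains is to recover the data, namely $J\cap V_i=M_i$, $J\cap D=A_{\mathrm{diag}}$, and $J/L_{\mathrm{sep}}=C_{\mathrm{gl}}$.
\begin{itemize}
\item The inclusion $M_i\subset J\cap V_i$ is clear.
\item For the reverse inclusion, an element $x\in J\cap V_i$ is orthogonal to the other summands. Its class in $D(L_{\mathrm{sep}})$ therefore lies in $D(M_i)$ and also in $C_{\mathrm{gl}}$. Separation forces this class to be zero, so $x\in M_i$.
\item The diagonal summand is recovered in the same way.
\item The quotient $J/L_{\mathrm{sep}}$ equals $C_{\mathrm{gl}}$ directly from \eqref{eq:reconstructed-order}.
\end{itemize}
I expect this to be the main obstacle. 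One must make sure that $L_{\mathrm{sep}}\subset J(C_{\mathrm{gl}})$ uses $0\in C_{\mathrm{gl}}$, and that the rational frame is preserved. This is exactly where the separation condition is essential.

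\textbf{Isomorphisms.} Let $\varphi\colon J\to J'$ be an Albert isomorphism carrying $\mathcal F$ to $\mathcal F'$, possibly permuting the frame. Such a $\varphi$ extends rationally, maps Peirce spaces to Peirce spaces, and preserves $q_i$, $\tau$ and $E$. It therefore carries the intersections and the glue to their counterparts, and so gives an isomorphism of data.

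Conversely, an isomorphism of data is a rational Albert isomorphism mapping $L_{\mathrm{sep}}$ to $L'_{\mathrm{sep}}$ and $C_{\mathrm{gl}}$ to $C'_{\mathrm{gl}}$. It hence maps the inverse images onto each other. It is integral, and because it preserves the cubic structure it is an Albert isomorphism.

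These two assignments are mutually inverse and functorial, so the framed isomorphism classes correspond. Taking $J=J'$ gives $\Aut(J,\mathcal F)\cong\Aut(\mathcal C(J,\mathcal F))$.
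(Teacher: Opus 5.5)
Your proposal is correct and follows essentially the same route as the paper. Both arguments recover $J$ as the preimage of $J/L_{\mathrm{sep}}$, use the separation condition \eqref{eq:separated-code} to show that $J(C_{\mathrm{gl}})$ cannot enlarge any Peirce intersection, and match data isomorphisms with frame-preserving Albert isomorphisms. You also verify that $\mathcal C(J,\mathcal F)$ is admissible (full rank, isotropy, separation), which the paper records just before the definition; note only that separation is needed for the inclusion $J(C_{\mathrm{gl}})\cap V_i\subset M_i$, not for $L_{\mathrm{sep}}\subset J(C_{\mathrm{gl}})$.
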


\begin{proof}
Starting from $J$, the preimage of $J/L_{\mathrm{sep}}$ in $L_{\mathrm{sep}}^\vee$ is $J$.  Its integrality and
local Albert structure are inherited.  Conversely, admissibility makes $J(C_{\mathrm{gl}})$ an
integral Albert algebra.  Its intersection with a rational Peirce summand can
exceed the prescribed lattice only if $C_{\mathrm{gl}}$ meets the corresponding discriminant
summand nontrivially.  Condition~\eqref{eq:separated-code} excludes this, so all four
intersections, and hence $L_{\mathrm{sep}}$ and $C_{\mathrm{gl}}$, are recovered.

An isomorphism of data preserves the rational norm and identity, hence the Albert
structure, and carries the preimage of $C_{\mathrm{gl}}$ to that of the target code.  Conversely,
a frame-preserving Albert isomorphism preserves the rational Peirce spaces and
their integral intersections, and therefore induces an isomorphism of data.
\end{proof}

\subsection{The standard and extremal diagonal orders}

For the standard diagonal frame,
\[
 A_{\mathrm{diag}}=\Z^3,\qquad M_1=M_2=M_3=\cO,\qquad C_{\mathrm{gl}}=0,
\]
with the octonion norm and its triality tensor.  The whole order is already
separated, and its frame automorphisms are the integral trialities and index
permutations used in \cref{prop:standard-bound}.

For a scale-$2$ frame in an extremal order, the diagonal intersection is exactly
\begin{equation}\label{eq:diagonal-order}
 A_{\mathrm{diag},2}
 =\Z E+\Z(2e_1)+\Z(2e_2)
 =\{(a_1,a_2,a_3)\in\Z^3:a_1\equiv a_2\equiv a_3\pmod2\}.
\end{equation}
To see this, the rational eigenvalues of an integral diagonal element are roots of
its monic integral characteristic polynomial, hence are integers.  Thus the
intersection lies in $\Z^3$ and contains the displayed parity order.  Any proper
intermediate lattice contains some $e_i$: modulo the parity order the three
nonzero classes are represented by $e_1,e_2,e_3$.  This contradicts extremality.
The diagonal order is the cubic order $\Z+2\Z^3$, of index $4$ in $\Z^3$ and
trace discriminant $16$.  It is not Gorenstein at $2$.  Indeed, let $u$ and $v$
be the classes of $2e_1$ and $2e_2$ in
$A_{\mathrm{diag},2}/2A_{\mathrm{diag},2}$.  Then
\[
 A_{\mathrm{diag},2}/2A_{\mathrm{diag},2}
 \cong\F_2[u,v]/(u^2,uv,v^2),
\]
whose socle is spanned by $u,v$ and has dimension $2$.  This explains why the
Gorenstein coefficient formula in
\eqref{eq:G2-embedding-coefficients} cannot simply be applied to count these
scaled frames.

\section*{Acknowledgements}

The author is grateful to Robert Langlands and to the late G\"unter Harder for
discussions about Tamagawa numbers during his graduate studies.  He also thanks
Gabriele Nebe for helpful discussions, many years ago, on the topics of this paper.

% Alpha-style citation labels; entries are ordered alphabetically by author.
% The bibliography is kept inline so this source is self-contained.

\end{document}